\renewcommand*\env@matrix[1][\arraystretch]{%
  \edef\arraystretch{#1}%
  \hskip -\arraycolsep
  \let\@ifnextchar\new@ifnextchar
  \array{*\c@MaxMatrixCols c}}
\newcommand{\vertiii}[1]{{\left\vert\kern-0.25ex\left\vert\kern-0.25ex\left\vert #1
    \right\vert\kern-0.25ex\right\vert\kern-0.25ex\right\vert}}
\def\S{{\mathfrak s}}
\def\T{{\mathcal T}}
\def\E{{\mathcal E}}
\def\O{{\mathcal O}}
\def\bn{{\bf n}}
\def\bu{\bm{u}}
\def\bbf{\bm{f}}
\def\pT{{\partial T}}
\def\3bar{{|\!|\!|}}
\newtheorem{FD-algorithm}{5-Point Finite Difference Algorithm}[section]
\title{Simplified weak Galerkin and New Finite Difference Schemes for the Stokes Equation}
\author{Yujie Liu\thanks{School of Data and Computer Science, Sun Yat-sen University, Guangzhou, 510275, China (liuyujie5@mail.sysu.edu.cn). The research of Liu was partially supported by Guangdong Provincial Natural Science Foundation (No. 2017A030310285), Shandong Provincial natural Science Foundation (No. ZR2016AB15) and Youthful Teacher Foster Plan Of Sun Yat-Sen University (No. 171gpy118),} \and Junping Wang
\thanks{Division of Mathematical Sciences, National Science Foundation, Alexandria, VA 22314 (jwang@nsf.gov). The research of Wang was supported in part by the NSF IR/D program, while working at National Science Foundation. However, any opinion, finding, and conclusions or recommendations expressed in this material are those of the author and do not necessarily reflect the views of the
National Science Foundation,}
}
\begin{document}
\maketitle

\begin{abstract}
This article presents a simplified formulation for the weak Galerkin finite element method for the Stokes equation without using the degrees of freedom associated with the unknowns in the interior of each element as formulated in the original weak Galerkin algorithm. The simplified formulation preserves the important mass conservation property locally on each element and allows the use of general polygonal partitions. A particular application of the simplified weak Galerkin on rectangular partitions yields a new class of 5- and 7-point finite difference schemes for the Stokes equation. An explicit formula is presented for the computation of the element stiffness matrices on arbitrary polygonal elements. Error estimates of optimal order are established for the simplified weak Galerkin finite element method in the $H^1$ and $L^2$ norms. Furthermore, a superconvergence of order $O(h^{3/2})$ is established on rectangular partitions for the velocity approximation in the $H^1$ norm at cell centers, and a similar superconvergence is derived for the pressure approximation in the $L^2$ norm at cell centers. Some numerical results are reported to confirm the convergence and superconvergence theory.
\end{abstract}

\begin{keywords} Finite difference methods, superconvergence, Simplified weak Galerkin, Stokes equation.
\end{keywords}

\begin{AMS}
Primary: 65N30, 65N15, 65N12; Secondary: 35B45, 35J50, 76S05, 76T99, 76R99
\end{AMS}

\pagestyle{myheadings}

%

\section{Introduction}
This paper is concerned with the development of a simplified formulation for the weak Galerkin finite element method for the Stokes equation developed in \cite{wy-stokes}. For simplicity, consider the following $2D$ Stokes problem which seeks a velocity field $\bm{u}$ and a pressure unknown $p$ satisfying
\begin{equation}\label{Equ.Stokes}
\left\{
\begin{array}{ccl}
-\Delta \bm{u}+ \nabla p &=\bm{f}  &{\rm in}\ \Omega,\\
{\rm div} \bm{u}&=0                &{\rm in}\ \Omega,\\
 \bm{u}&=0                         &{\rm on}\ \Gamma,
\end{array}
\right.
\end{equation}
where $\Omega$ is a bounded polygonal domain with boundary $\Gamma=\partial\Omega$, ${\bf f}=(f^{(1)},f^{(2)})^t$ represents external source, and the pressure function $p$ is assumed to have mean value zero; i.e., $\int_{\Omega} p d\Omega =0$.

The weak Galerkin finite element method (WG-FEM) is a natural extension of the standard conforming finite element method where differential operators are approximated as discrete distributions or discrete weak derivatives. WG-FEM has the flexibility of dealing with discontinuous elements while sharing the same weak formulation with the classical conforming finite element methods. Following its first development in \cite{WangYe_2013,wy3655} for second order elliptic equations, the method has gained a lot attention and popularity and has been applied to various PDEs including the Stokes equation, biharmonic equation, elasticity equation, and Maxwell's equations, see \cite{WangWang_2016,RXZZ-JCAM-2016,LiDanWW} and the references therein for more details.

In this paper, we propose a simplified formulation for the weak Galerkin finite element method for the Stokes equation. In the simplified weak Galerkin (SWG), the degree of freedoms involves only those on the element boundary; i.e., the unknowns associated with the interior of each element in the original weak Galerkin are not used at all in SWG. This simplified formulation still preserves the important mass conservation property locally on each element and allows the use of general polygonal partitions. The stability and convergence is established by following the general framework developed in \cite{wy-stokes}, but with a non-trivial modification due to the absence of the interior unknowns employed in the original weak Galerkin. A comprehensive formula for the computation of the element stiffness matrices is presented for a better understanding and dissemination of the method. As a particular case study, the global stiffness matrix for the simplified weak Galerkin is assembled on uniform rectangular partitions, yielding a new class of 5- and 7-point finite difference schemes for the Stokes equation. Furthermore, a superconvergence is established in the $H^1$ norm for the velocity approximation at cell centers, as well as for the pressure approximation in the $L^2$ norm at cell centers on (nonuniform) rectangular partitions. A section is devoted to the description of the new class of 5- and 7-point finite difference schemes in order to facilitate the practical use of this scheme. Due to the connection between the finite difference scheme and the simplified weak Galerkin finite element method, the convergence theory developed for SWG can be easily extended to the new finite difference schemes.
The local conservation property is also inherited by the finite difference scheme. The SWG method indeed provides a way to generalize the new finite difference scheme from regular Cartesian grids to irregular polygonal grids.

There are existing finite difference schemes in literature for solving the Stokes equations \cite{Welch-MAC,Strikwerda_1984,LTF-IJNMF-1995}. The MAC scheme \cite{Welch-MAC} is one of the finite difference methods that has enjoyed a lot attention and popularity in computational fluid dynamics. Nicolaides \cite{Nicolaides_1989} showed that the MAC method can be seen as a special case of the co-volume formulation. Han and Wu \cite{Han_Wu_1998} derived the MAC scheme from a newly proposed mixed finite element method, in which the two components of the velocity and the pressure are defined on different meshes (an approach that resembles the MAC idea). Recently, Kanschat \cite{Kanschat2008} showed that the MAC scheme is algebraically equivalent to the divergence-conforming discontinuous Galerkin method with a modified interior penalty formulation. In \cite{Minev_2008}, Minev demonstrated that the MAC scheme can be interpreted within the framework of the local discontinuous Galerkin (LDG) methods. The 5- and 7-point finite difference schemes of this paper are based on different stencils from the MAC scheme, and therefore represent a new class of numerical schemes which are stable and have error estimates of optimal order.

The paper is organized as follows: In Section \ref{section.SWGpoly}, we present a simplified version of the weak Galerkin finite element method on arbitrary polygonal partitions. In Section \ref{Section:stiffnessmatrix}, we present a formula with comprehensive derivation for the computation of the element stiffness matrices in SWG. In Section \ref{sectionFDSWG}, we apply the SWG scheme to uniform rectangular partitions and derive a class of 5- and 7-point finite difference schemes. In particular, Subsection \ref{subsection-5point} is devoted to a presentation of the class of 5-point finite difference scheme for the model problem \eqref{Equ.Stokes}. In Section \ref{Section:Stability}, we provide a stability analysis for the simplified weak Galerkin. In Section \ref{sectionEEStokes}, we derive some error estimates for the SWG approximations in various Sobolev norms. In Section \ref{sectionSuperC}, a superconvergence result is developed for the numerical velocity and pressure arising from rectangular partitions. Finally, in Section \ref{numerical-experiments}, we present a few numerical results to demonstrate the efficiency and accuracy of SWG.

\section{A Simplified Weak Galerkin}\label{section.SWGpoly}
Assume that the domain is of polygonal type and is partitioned into unstructured polygons $\T_h=\{T\}$. Let $T\in \T_h$ be a polygonal element of $N$ sides (e.g., a hexahedron shown in Fig. \ref{fig.hexahedron}). Denote by $\{e_i\}_{i=1}^{N}$ the edge set of $T$. Denote by $M_i$ the midpoint of the edge $e_i$, and $\bn_i$ the outward normal direction of $e_i$; see Fig. \ref{fig.hexahedron}.

Given any piecewise constant function $u_b$ on the boundary of $T\in \T_h$; i.e.,
$u_b|_{e_i} =u_{b,i}$, we define the weak gradient of $u_b$ on $T$ by
\begin{equation}\label{DefWGpoly}
\nabla_w u_b:=\displaystyle\frac{1}{|T|}\sum_{i=1}^N u_{b,i}|e_i|\bf{n_i},
\end{equation}
where $|e_i|$ is the length of the edge $e_i$, $|T|$ is the area of the element $T$. For convenience, we denote by $W(T)$ the space of piecewise constant functions on $\pT$. The global finite element space $W(\T_h)$ is constructed by patching together all the local elements $W(T)$ through single values on interior edges. The subspace of $W(\T_h)$ consisting of functions with vanishing boundary value is denoted as $W_0(\T_h)$.

We use the conventional notation of ${P}_j(T)$ for the space of polynomials of degree $j\ge 0$ on $T$. For each $u_b\in W(T)$, we associate it with a linear extension in $T$, denoted by $\S(u_b)\in {P}_1 (T)$, satisfying
\begin{equation}\label{Def.extension}
\sum_{i=1}^{N}(\S(u_b)(M_i) -u_{b,i})v(M_i)|e_i|=0\quad \forall\; v\in {P}_1(T).
\end{equation}
It is easy to see that $\S(u_b)$ is well defined by \eqref{Def.extension}, and its computation is simple and local. In fact, $\S(u_b)$ can be viewed as an extension of $u_b$ from $\partial T$ to $T$ through a least-squares fitting.

\begin{figure}[!h]
\begin{center}
\begin{tikzpicture}[rotate=0, scale =2.0]
    \path (-0.8660, 0.5) coordinate (A1);
    \path (-0.8660,-0.5) coordinate (A2);
    \path (0.     ,-1  ) coordinate (A3);
    \path (0.8660 ,-0.5) coordinate (A4);
    \path (0.8660 ,0.5 ) coordinate (A5);
    \path (0.     ,1.0 ) coordinate (A6);

    \path (0.0,0.0) coordinate (center);

    \path (-0.8660,0.   )    coordinate (A1half);
    \path (-0.433 ,-0.75)    coordinate (A2half);
    \path ( 0.433 ,-0.75)    coordinate (A3half);
    \path ( 0.8660,0.   )    coordinate (A4half);
    \path ( 0.433 , 0.75)    coordinate (A5half);
    \path (-0.433 , 0.75)    coordinate (A6half);

    \path (A1half) ++(-0.25 ,  0.25)  coordinate (A1halfe);
    \path (A2half) ++(-0.225, -0.0 )  coordinate (A2halfe);
    \path (A3half) ++( 0.225, -0.0 )  coordinate (A3halfe);
    \path (A4half) ++( 0.25 ,  0.25)  coordinate (A4halfe);
    \path (A5half) ++( 0.225,  0.0 )  coordinate (A5halfe);
    \path (A6half) ++(-0.225,  0.0 )  coordinate (A6halfe);

    \path (A1half) ++(-0.6   ,   0   )  coordinate (A1To);
    \path (A2half) ++(-0.2598,  -0.45)  coordinate (A2To);
    \path (A3half) ++( 0.2598,  -0.45)  coordinate (A3To);
    \path (A4half) ++( 0.6   ,   0   )  coordinate (A4To);
    \path (A5half) ++( 0.2598,   0.45)  coordinate (A5To);
    \path (A6half) ++(-0.2598,   0.45)  coordinate (A6To);
    \draw (A6) -- (A1) -- (A2) -- (A3)--(A4)-- (A5)--(A6);

    \draw node at (center) {$T$};
    \draw node[right] at (A1half) {$M_{1}$};
    \draw node[above] at (A2half) {$M_{2}$};
    \draw node[above] at (A3half) {$M_{3}$};
    \draw node[left]  at (A4half) {$M_{4}$};
    \draw node[below] at (A5half) {$M_{5}$};
    \draw node[below] at (A6half) {$M_{6}$};

    \draw node[right] at (A1halfe) {$e_{1}$};
    \draw node[below] at (A2halfe) {$e_{2}$};
    \draw node[below] at (A3halfe) {$e_{3}$};
    \draw node[left]  at (A4halfe) {$e_{4}$};
    \draw node[above] at (A5halfe) {$e_{5}$};
    \draw node[above] at (A6halfe) {$e_{6}$};

    \draw[->,thick] (A1half) -- (A1To) node[left] {$\mathbf{n}_1$};
    \draw[->,thick] (A2half) -- (A2To) node[below]{$\mathbf{n}_2$};
    \draw[->,thick] (A3half) -- (A3To) node[below]{$\mathbf{n}_3$};
    \draw[->,thick] (A4half) -- (A4To) node[right]{$\mathbf{n}_4$};
    \draw[->,thick] (A5half) -- (A5To) node[above]{$\mathbf{n}_5$};
    \draw[->,thick] (A6half) -- (A6To) node[above]{$\mathbf{n}_6$};

    \draw [fill=red] (A1half) circle (0.05cm);
    \draw [fill=red] (A2half) circle (0.05cm);
    \draw [fill=red] (A3half) circle (0.05cm);
    \draw [fill=red] (A4half) circle (0.05cm);
    \draw [fill=red] (A5half) circle (0.05cm);
    \draw [fill=red] (A6half) circle (0.05cm);

\end{tikzpicture}
\end{center}
\caption{An illustrative hexahedron element}
\label{fig.hexahedron}
\end{figure}
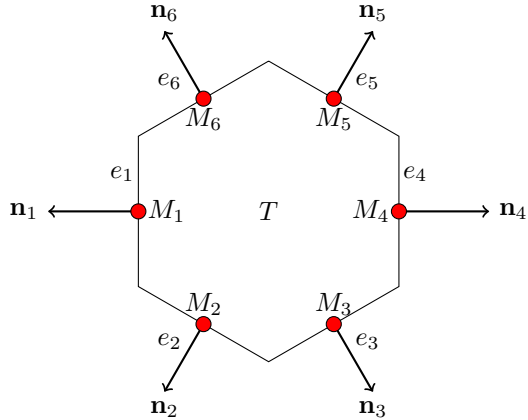

On each element $T\in \T_h$, we introduce the following stabilizer:
\begin{equation}\label{Def-ST-poly}
\begin{split}
S_T(u_b,v_b):=&\ h^{-1}\sum_{i=1}^N (\S(u_b)(M_i)-u_{b,i})(\S(v_b)(M_i)-v_{b,i})|e_i|\\
            = & \ h^{-1}\langle Q_b \S(u_b)-u_{b},Q_b \S(v_b)- v_{b}\rangle_{\partial T}
            \end{split}
\end{equation}
for $u_b, v_b \in W(T)$, where $Q_b$ is the $L^2$ projection operator onto $W(T)$ and $\langle\cdot,\cdot\rangle_\pT$ stands for the usual $L^2$ inner product in $L^2(\pT)$. The weak gradient and the local stabilizer defined in \eqref{DefWGpoly} and \eqref{Def-ST-poly} can be extended naturally to vector-valued function, for example in the two dimensional space we would have
\begin{eqnarray}
&&\nabla_w \bm{u}_b :=\begin{bmatrix} \nabla_w u_b\\ \nabla_w v_b \end{bmatrix},\\
&& S_T(\bm{u}_b,\bm{w}_b) := S_T(u_b,w_b) + S_T(v_b,z_b)
\end{eqnarray}
for $\bm{u}_b = \begin{bmatrix} u_b\\ v_b \end{bmatrix} \in [W(T)]^2$ and $\bm{w}_b = \begin{bmatrix} w_b\\ z_b \end{bmatrix} \in [W(T)]^2$.

For $\bm{u}_b \in [W(T)]^2$, the weak divergence of $\bm{u}_b$ is defined as a constant on $T$ satisfying the following equation:
\begin{equation}\label{equ.weak.divergence}
(\nabla_w \cdot \bm{u}_b,1)_T :=  \langle\bm{u}_b\cdot \bm{n}, 1\rangle_{\partial T}.
\end{equation}
The weak divergence is therefore given by
\begin{equation}\label{equ.weak.divergence.new}
\nabla_w \cdot \bm{u}_b|_T =  \frac{1}{|T|}\sum_{i=1}^N \bm{u}_b|_{e_i}\cdot \bm{n}_i |e_i|.
\end{equation}

The Simplified Weak Galerkin (SWG) method for the Stokes equation \eqref{Equ.Stokes} {\em
seeks $\bm{u}_b \in [W_0(\T_h)]^2$, $p_h \in {P}_0 (\T_h)$ such that}
\begin{equation}\label{equ.Stokes-FD-SWG}
\left\{
\renewcommand\arraystretch{2}
\begin{array}{rl}
\kappa S(\bm{u}_b,\bm{v}_b) + (\nabla_w \bm{u}_b, \nabla_w \bm{v}_b ) -(\nabla_w\cdot \bm{v}_b,p_h)&=(\bm{f}, \S(\bm{v}_b)),\\
(\nabla_w\cdot \bm{u}_b,w)&=0,
\end{array}
\right.
\end{equation}
for all $\bm{v}_b\in [W_0(\T_h)]^2 $ and $w \in {P}_0 (T_h)$, where $S(\bm{u}_b,\bm{v}_b):=\sum_{T}S_T(\bm{u}_b,\bm{v}_b)$ and $\kappa>0$ is any prescribed stabilization parameter.

\section{On the Computation of Element Stiffness Matrices}\label{Section:stiffnessmatrix}
The simplified weak Galerkin finite element method \eqref{equ.Stokes-FD-SWG} can be reformulated as follows:
\begin{equation}\label{equ.Stokes-FD-SWG-single}
\left\{
\begin{array}{rl}
\displaystyle \sum_{T} \left( \kappa S_T(u_b,w_b) + (\nabla_w u_b, \nabla_w w_b )_T - (\nabla_w\cdot \begin{bmatrix}w_b\\0 \end{bmatrix} ,p_h)_T\right)&=\displaystyle\sum_{T}( f^{(1)}, \S(w_b))_T,\\
\displaystyle\sum_{T}\left( \kappa S_T(v_b,z_b) + (\nabla_w v_b, \nabla_w z_b )_T -(\nabla_w\cdot \begin{bmatrix}0\\z_b \end{bmatrix},p_h)_T\right)&=\displaystyle\sum_{T}( f^{(2)}, \S(z_b))_T,\\
\displaystyle\sum_{T}(\nabla_w\cdot \begin{bmatrix}u_b\\v_b \end{bmatrix},\chi)_T&=0,
\end{array}
\right.
\end{equation}
for all $w_b, z_b\in W_0(\T_h)$ and $\chi\in P_0(\T_h)$. From \eqref{equ.Stokes-FD-SWG-single}, the element stiffness matrix on $T\in\T_h$ consists of three sub-matrices corresponding to the following forms:
$$
S_T(u_b,w_b),\; (\nabla_w u_b, \nabla_w w_b )_T,\; \text{and }
(\nabla_w\cdot \begin{bmatrix}u_b\\v_b \end{bmatrix},\chi)_T.
$$
The goal of this section is to derive the matrix for each of the three forms.

Denote by
$X_{{u}_b}$, $X_{{u}_b}$, and $P$ the vector representation of $u_b$, $v_b$, and $p_h$ given by
\begin{equation*}
X_{{u}_b}
=
\begin{bmatrix}
u_{b,1} \\
u_{b,2} \\
\vdots  \\
u_{b,N} \\
\end{bmatrix},\;
X_{{v}_b}
=
\begin{bmatrix}
v_{b,1} \\
v_{b,2} \\
\vdots  \\
v_{b,N} \\
\end{bmatrix},\;\text{ and } P=[p_h].
\end{equation*}
Here $u_{b,i}$ and $v_{b,i}$, $i=1\ldots N$, represent the values of $u_b$ and $v_b$ at the midpoint $M_i$ of the edge $e_i$, and $P$ is the value of $p_h$ at the center of $T$. The following theorem gives a computational formula for the element stiffness matrix and the element load vector.

\begin{theorem}\label{THM:ESM}
The element stiffness matrix and the element load vector for the SWG scheme are given in a block matrix form as follows:
\begin{equation}\label{EQ:ESM:01}
\begin{bmatrix}
\kappa h^{-1} A + B & 0 & \; Q_1 \\
0      & \kappa A + B  & \; Q_2\\
Q_1^t & Q_2^t & \; 0 \\
\end{bmatrix}
\begin{bmatrix}
X_{{u}_b} \\
X_{{v}_b} \\
P \\
\end{bmatrix}
\cong \begin{bmatrix}
F_1 \\
F_2 \\
0 \\
\end{bmatrix},
\end{equation}
where the block components in \eqref{EQ:ESM:01} are computed explicitly by:
\begin{eqnarray*}
&&A=\{a_{i,j}\}_{N\times N} = E- EM(M^tEM)^{-1}M^tE, \\
&&B=\{b_{i,j}\}_{N\times N}, \; b_{i,j} = \bm{n}_i\cdot\bm{n}_j\displaystyle\frac{|e_i||e_j|}{|T|},\\
&&D=\{d_{i,j}\}_{3\times N}=(M^tEM)^{-1}M^tE,\\
&&F_1=\{f^{(1)}_{i}\}_{N\times 1}, \; \displaystyle f^{(1)}_{i} = \int_{T}f^{(1)}(x,y) (d_{1,i} + d_{2,i}(x-x_T) + d_{3,i}(y-y_T))dT,\\
&&F_2=\{f^{(2)}_{i}\}_{N\times 1}, \; \displaystyle f^{(2)}_{i} = \int_{T}f^{(2)}(x,y) (d_{1,i} + d_{2,i}(x-x_T) + d_{3,i}(y-y_T)) dT,\\
&&Q_1=\{q_{i}\}_{N\times 1}, \; q_{i} = |e_i|
\begin{bmatrix}
1 \\
0 \\
\end{bmatrix} \cdot \bm{n}_i,\\
&&Q_2=\{q_{i}\}_{N\times 1}, \; q_{i} = |e_i|
\begin{bmatrix}
0 \\
1 \\
\end{bmatrix} \cdot \bm{n}_i.
\end{eqnarray*}
Here $Q_1^t$ and $Q_2^t$ stands for the transpose of $Q_1$ and $Q_2$, respectively. The matrices $M$ and $E$ are given by
\begin{equation*}
M=
\begin{bmatrix}
1      & x_{1} - x_T & y_{1} - y_T\\
1      & x_{2} - x_T & y_{2} - y_T\\
\vdots & \vdots        & \vdots       \\
1      & x_{N} - x_T & y_{N} - y_T\\
\end{bmatrix}_{N\times3},\;
E=
\begin{bmatrix}
|e_1| &       &           &       \\  
      & |e_2| &           &       \\
      &       & \ddots    &       \\
      &       &           & |e_N| \\
\end{bmatrix}_{N\times N},
\end{equation*}
where $M_T=(x_T,y_T)$ is any point on the plane (e.g., the center of $T$ as a specific case), $(x_i, y_i)$ is the midpoint of $e_i$, $|e_i|$ is the length of edge $e_i$, $\bm{n}_i$ is the outward normal vector on $e_i$, $|T|$ is the area of the element $T$, and $h$ is the diameter of the element $T$. For simplicity, one may chose $h=\max_i{|e_i|}$.
\end{theorem}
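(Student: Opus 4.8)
The plan is to expand each of the three bilinear forms appearing in \eqref{equ.Stokes-FD-SWG-single} in terms of the nodal unknowns $X_{u_b}, X_{v_b}, P$ and read off the matrices. The key algebraic device is an explicit formula for the linear extension $\S(u_b)$ in the basis $\{1, x-x_T, y-y_T\}$ of $P_1(T)$. Writing $\S(u_b)(x,y) = c_1 + c_2(x-x_T) + c_3(y-y_T)$ and plugging into the defining least-squares condition \eqref{Def.extension}, I would test against the same three basis functions to get the $3\times 3$ normal equations $(M^t E M)\,\bm{c} = M^t E X_{u_b}$, where $M$ records the (shifted) edge midpoints and $E$ the edge lengths; hence $\bm{c} = (M^t E M)^{-1} M^t E\,X_{u_b} =: D\,X_{u_b}$. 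This identifies the matrix $D$ in the statement and shows that the vector of values $(\S(u_b)(M_i))_{i=1}^N$ equals $M D\, X_{u_b} = M(M^tEM)^{-1}M^tE\,X_{u_b}$.

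\emph{Stabilizer term.} Using \eqref{Def-ST-poly}, $S_T(u_b,w_b) = h^{-1}(MD X_{u_b} - X_{u_b})^t E\,(MD X_{w_b} - X_{w_b})$. Since $(MD-I)^t E (MD-I) = E - E M D - D^t M^t E + D^t (M^t E M) D$ and $D = (M^tEM)^{-1}M^tE$ makes the last two terms collapse (each equals $EM(M^tEM)^{-1}M^tE$), this simplifies to $E - EM(M^tEM)^{-1}M^tE = A$. Thus the stabilizer contributes $\kappa h^{-1} A$ to the $(1,1)$ block and $\kappa h^{-1} A$ — wait, the $(2,2)$ block carries $\kappa A$ rather than $\kappa h^{-1} A$; this discrepancy is simply the choice of scaling absorbed into $\kappa$ versus $h$ in the two components as written in \eqref{equ.Stokes-FD-SWG-single}, and I would note it explicitly when assembling the block matrix \eqref{EQ:ESM:01}.

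\emph{Weak-gradient term.} From \eqref{DefWGpoly}, $\nabla_w u_b = \frac{1}{|T|}\sum_i u_{b,i}|e_i|\bm n_i$ is a constant vector on $T$, so $(\nabla_w u_b,\nabla_w w_b)_T = |T|\,(\nabla_w u_b)\cdot(\nabla_w w_b) = \frac{1}{|T|}\sum_{i,j} u_{b,i}w_{b,j}|e_i||e_j|\,\bm n_i\cdot\bm n_j = X_{u_b}^t B\,X_{w_b}$ with $b_{ij} = \bm n_i\cdot\bm n_j |e_i||e_j|/|T|$. \emph{Weak-divergence/pressure term.} Since $p_h$ is constant on $T$, \eqref{equ.weak.divergence.new} gives $(\nabla_w\cdot[w_b,0]^t,p_h)_T = p_h \sum_i w_{b,i}|e_i|(\bm e_1\cdot\bm n_i) = P\,Q_1^t X_{w_b}$ with $q_i = |e_i|\,[1,0]^t\!\cdot\bm n_i$, and similarly for the second component with $Q_2$; the continuity equation row is then $Q_1^t X_{u_b} + Q_2^t X_{v_b} = 0$, yielding the $(3,\cdot)$ block. \emph{Load vector.} The right-hand side $(f^{(1)},\S(w_b))_T = \int_T f^{(1)}(x,y)\,\bigl(d_{1,i}+d_{2,i}(x-x_T)+d_{3,i}(y-y_T)\bigr)\,dT$ summed against $w_{b,i}$ follows directly from $\S(w_b) = \sum_i (D X_{w_b})$-coordinates read against the basis $\{1,x-x_T,y-y_T\}$, i.e. the $i$-th entry of $F_1$ pairs $f^{(1)}$ with the $i$-th column of $D$ in that basis.

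The only genuine obstacle is the invertibility of $M^t E M$, i.e. that the least-squares fit \eqref{Def.extension} is well posed; this was already asserted after \eqref{Def.extension}, and it amounts to the $N$ shifted midpoints $(x_i - x_T, y_i - y_T)$ not being collinear, which holds for any nondegenerate polygon with $N\ge 3$ and is independent of the reference point $M_T = (x_T,y_T)$ (a shift of $x_T,y_T$ is an invertible change of basis of $P_1(T)$, so $D$, $A$, and the load entries are unaffected — explaining why $M_T$ may be taken arbitrary). Everything else is the bookkeeping of substituting the explicit expressions for $\nabla_w$, $\nabla_w\cdot$, and $\S$ into \eqref{equ.Stokes-FD-SWG-single} and collecting coefficients, so the proof is essentially a direct verification once the formula $\bm c = D X_{u_b}$ is in hand.
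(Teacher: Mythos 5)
Your proposal is correct and follows essentially the same route as the paper: derive the normal equations $(M^tEM)\,\bm{c}=M^tE\,X_{u_b}$ from \eqref{Def.extension} to get $D=(M^tEM)^{-1}M^tE$ and the midpoint values $MD\,X_{u_b}$, then expand each bilinear form directly (the paper shortens the stabilizer computation slightly by invoking the orthogonality in \eqref{Def.extension} to drop one factor of $MD-I$, whereas you expand the full quadratic form $(MD-I)^tE(MD-I)$ and cancel, reaching the same $A=E-EMD$). The one place you go astray is the parenthetical about the $(2,2)$ block: your own computation correctly yields $\kappa h^{-1}A$ for \emph{both} velocity blocks, since \eqref{equ.Stokes-FD-SWG-single} applies the identical form $\kappa S_T(\cdot,\cdot)$ to each component, so the $\kappa A$ in \eqref{EQ:ESM:01} is not a ``scaling absorbed into $\kappa$ versus $h$'' but simply a typographical inconsistency in the statement (the paper elsewhere defines $A$ with the factor $h^{-1}$ already folded in, which explains the slip); you should state that both diagonal velocity blocks are $\kappa h^{-1}A+B$ rather than invent an asymmetry that the scheme does not have.
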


\medskip
The rest of this section is devoted to a derivation of the formula \eqref{EQ:ESM:01}.

\subsection{Element stiffness matrix for $S_T(u_b,w_b)$}
For any $u_b\in W(T)$, the linear extension of $u_b$, $\S(u_b)$, can be represented as follows:
\[
\S(u_b)=\alpha_0 + \alpha_1 (x-x_T) + \alpha_2 (y-y_T).
\]
By \eqref{Def.extension}, we have
\[
\sum_{i=1}^{N}(\S(u_b)(M_i) -u_{b,i})\phi(M_i)|e_i|=0\quad \forall\, \phi\in {P}_1(T).
\]
It follows that
\[
\sum_{i=1}^{N}\left(\alpha_0 + \alpha_1 (x_{i}-x_T) + \alpha_2 (y_{i}-y_T) -  u_{b,i}\right)\phi(M_i)|e_i|=0\;\; \forall\, \phi\in {P}_1(T),
\]
where $(x_i, y_i)$ is the midpoint of $e_i$.
In particular, we may choose $\phi=1, x-x_T, y-y_T$ to obtain
\begin{equation*}
\left\{
\begin{array}{lll}
\displaystyle\sum_{i=1}^{N}\left( \alpha_0 + \alpha_1 (x_{i}-x_T) + \alpha_2 (y_{i}-y_T) -u_{b,i}\right)|e_i|=0,\\
\displaystyle\sum_{i=1}^{N}\left( \alpha_0 + \alpha_1 (x_{i}-x_T) + \alpha_2 (y_{i}-y_T) -u_{b,i}\right) (x_{i}-x_T)|e_i|=0,\\
\displaystyle\sum_{i=1}^{N}\left( \alpha_0 + \alpha_1 (x_{i}-x_T) + \alpha_2 (y_{i}-y_T) -u_{b,i}\right) (y_{i}-y_T)|e_i|=0,
\end{array}
\right.
\end{equation*}
which gives
\begin{equation}\label{equ.poly.1}
\left\{
\begin{array}{lll}
\displaystyle\sum_{i=1}^{N}\left( \alpha_0 + \alpha_1 (x_{i}-x_T) + \alpha_2 (y_{i}-y_T)\right)|e_i| = \sum_{i=1}^{N}u_{b,i}|e_i|,\\
\displaystyle\sum_{i=1}^{N}\left( \alpha_0 + \alpha_1 (x_{i}-x_T) + \alpha_2 (y_{i}-y_T)\right) (x_{i}-x_T)|e_i| = \sum_{i=1}^{N}u_{b,i}(x_{i}-x_T)|e_i|,\\
\displaystyle\sum_{i=1}^{N}\left( \alpha_0 + \alpha_1 (x_{i}-x_T) + \alpha_2 (y_{i}-y_T)\right) (y_{i}-y_T)|e_i| = \sum_{i=1}^{N}u_{b,i}(y_{i}-y_T)|e_i|.
\end{array}
\right.
\end{equation}
Then \eqref{equ.poly.1} can be reformulated as
\begin{equation*}
M^tEM
\begin{bmatrix}
\alpha_0  \\
\alpha_1  \\
\alpha_2  \\
\end{bmatrix}
=M^tE
\begin{bmatrix}
u_{b,1} \\
u_{b,2} \\
\vdots  \\
u_{b,N} \\
\end{bmatrix},
\end{equation*}
which leads to
\begin{equation}\label{EQ:alpha}
\begin{bmatrix}
\alpha_0  \\
\alpha_1  \\
\alpha_2  \\
\end{bmatrix}
=(M^tEM)^{-1}M^tE
\begin{bmatrix}
u_{b,1} \\
u_{b,2} \\
\vdots  \\
u_{b,N} \\
\end{bmatrix}.
\end{equation}
Since $\S(u_b)=\alpha_0 + \alpha_1 (x-x_T) + \alpha_2 (y-y_T)$, we have
\begin{eqnarray*}
&& \begin{bmatrix}
\S(u_{b})(M_1) \\
\S(u_{b})(M_2) \\
\vdots  \\
\S(u_{b})(M_N) \\
\end{bmatrix}
=
\begin{bmatrix}
1      & x_{1} - x_T & y_{1} - y_T\\
1      & x_{2} - x_T & y_{2} - y_T\\
\vdots & \vdots        & \vdots       \\
1      & x_{N} - x_T & y_{N} - y_T\\
\end{bmatrix}
\begin{bmatrix}
\alpha_0\\
\alpha_1\\
\alpha_2\\
\end{bmatrix}
=M
\begin{bmatrix}
\alpha_0\\
\alpha_1\\
\alpha_2\\
\end{bmatrix} \\
&=&M(M^tEM)^{-1}M^tE
\begin{bmatrix}
u_{b,1} \\
u_{b,2} \\
\vdots  \\
u_{b,N} \\
\end{bmatrix}.
\end{eqnarray*}

Let $w_{b}$ be the basis function of $W(T)$ corresponding to the edge $e_j$ of $T$; i.e.,
\begin{equation*}
w_{b}=\left\{
\begin{array}{lllll}
1, \text{ on } e_j,\\
0, \text{ on other edges},\\
\end{array}
\right.
\end{equation*}
that is $w_{b,j}=1$, $w_{b,\{1,\ldots, N\}/j}=0$,
then the coefficient $(\alpha_0,\alpha_1,\alpha_2)^t$ for $\S(w_b)$ is given by
\begin{equation*}
\begin{bmatrix}
\alpha_0  \\
\alpha_1  \\
\alpha_2  \\
\end{bmatrix}
=(M^tEM)^{-1}M^tE
\begin{bmatrix}
w_{b,1}\\
\vdots \\
w_{b,j}\\
\vdots \\
w_{b,N}\\
\end{bmatrix}
=(M^tEM)^{-1}M^tE
\begin{bmatrix}
0\\
\vdots \\
1\\
\vdots \\
0\\
\end{bmatrix}
\triangleq
\begin{bmatrix}
d_{1,j}\\
d_{2,j}\\
d_{3,j}\\
\end{bmatrix}.
\end{equation*}
Thus, we have
\begin{equation}\label{equ.SM.1}
\begin{split}
S_T(u_b,w_b)=&\ \sum_{i=1}^N h^{-1}(\S(u_b)(M_i)-u_{b,i})(\S(w_b)(M_i)-w_{b,i})|e_i|~\\
            =&\ h^{-1}\sum_{i=1}^N (u_{b,i}-\S(u_b)(M_i))w_{b,i}|e_i|\\
            =&\ h^{-1}\left((I_N-M(M^tEM)^{-1}M^tE)
\begin{bmatrix}
u_{b,1} \\
u_{b,2} \\
\vdots  \\
u_{b,N} \\
\end{bmatrix}\right)_j
\cdot w_{b,j}|e_j|
\end{split}
\end{equation}
where $I_N$ is the $N\times N$ identity matrix. The identity \eqref{equ.SM.1} gives rise to the element stiffness matrix for the term $S_T(u_b,w_b)$. The result can be summarized as follows.

\begin{theorem}
For the polygonal element $T$ depicted in Fig. \ref{fig.hexahedron}, the element stiffness matrix corresponding to the bilinear form $S_T(u_b,w_b)$ is given by
\begin{equation}\label{EQ:element-stiffness-matrix-S}
A=\{a_{i,j}\}_{i,j=1}^N: = h^{-1}(E- EM(M^tEM)^{-1}M^tE),
\end{equation}
where
\begin{equation*}
M=
\begin{bmatrix}
1      & x_{1} - x_T & y_{1} - y_T\\
1      & x_{2} - x_T & y_{2} - y_T\\
\vdots & \vdots        & \vdots       \\
1      & x_{N} - x_T & y_{N} - y_T\\
\end{bmatrix}_{N\times3},\;
E=
\begin{bmatrix}
|e_1| &       &           &       \\  
      & |e_2| &           &       \\
      &       & \ddots    &       \\
      &       &           & |e_N| \\
\end{bmatrix}_{N\times N},
\end{equation*}
$M_T=(x_T,y_T)$ is the center of $T$ or any point on the plane, $(x_i, y_i)$ is the midpoint of $e_i$, $|e_i|$ is the length of edge $e_i$, $\bm{n}_i$ is the outward normal vector,  $|T|$ is the area of the element T.
\end{theorem}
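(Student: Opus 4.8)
The plan is to unwind the definitions in \eqref{Def.extension} and \eqref{Def-ST-poly} and reduce everything to linear algebra. First I would write the linear extension as $\S(u_b)=\alpha_0+\alpha_1(x-x_T)+\alpha_2(y-y_T)$ and observe that the defining condition \eqref{Def.extension}, being a statement for all $v\in P_1(T)$, is equivalent to the three equations obtained by taking $v=1$, $v=x-x_T$, and $v=y-y_T$. Collecting these three equations, with the edge lengths $|e_i|$ as weights, gives precisely the normal-equations system $M^tEM\,(\alpha_0,\alpha_1,\alpha_2)^t = M^tE\,X_{u_b}$, where $M$ and $E$ are as in the statement; here one uses that the $i$-th row of $M$ is $(1,x_i-x_T,y_i-y_T)$ so that $(M\alpha)_i = \S(u_b)(M_i)$. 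Since $T$ has at least three edges whose midpoints are not collinear, $M$ has full column rank, so $M^tEM$ is invertible (it is the Gram matrix of the columns of $E^{1/2}M$), which justifies \eqref{EQ:alpha} and shows $\S(u_b)$ is well defined.

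Next I would evaluate $\S(u_b)$ at all midpoints simultaneously: the vector $(\S(u_b)(M_1),\dots,\S(u_b)(M_N))^t$ equals $M(\alpha_0,\alpha_1,\alpha_2)^t = M(M^tEM)^{-1}M^tE\,X_{u_b}$, i.e. $E$-weighted orthogonal projection of $X_{u_b}$ onto the range of $M$. Substituting into the first line of \eqref{Def-ST-poly}, the stabilizer becomes
\begin{equation*}
S_T(u_b,v_b)=h^{-1}\bigl(X_{v_b}\bigr)^t\bigl(E - EM(M^tEM)^{-1}M^tE\bigr)X_{u_b},
\end{equation*}
after using the symmetry/idempotency of the projection; more precisely, writing $G = I_N - M(M^tEM)^{-1}M^tE$, one has $\S(u_b)(M_i)-u_{b,i} = -(GX_{u_b})_i$ and the quadratic form is $h^{-1}(GX_{u_b})^tE(GX_{v_b}) = h^{-1}X_{u_b}^t G^tEG\,X_{v_b}$; then verifying $G^tEG = E - EM(M^tEM)^{-1}M^tE$ reduces to checking $E M (M^tEM)^{-1} M^t E$ is idempotent with respect to the $E^{-1}$-inner product, a one-line computation. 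Reading off the $(i,j)$ entry gives \eqref{EQ:element-stiffness-matrix-S} with the overall factor $h^{-1}$ as claimed. The basis-function bookkeeping — that picking $w_b$ supported on a single edge $e_j$ extracts the $j$-th column — is exactly as carried out in \eqref{equ.SM.1}, so the matrix entry $a_{i,j}$ is $h^{-1}$ times the $(i,j)$ entry of $E - EM(M^tEM)^{-1}M^tE$.

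The only genuine obstacle is confirming the algebraic identity $G^tEG = E - EM(M^tEM)^{-1}M^tE$, which amounts to showing the cross terms cancel: expanding $G^tEG = E - EM(M^tEM)^{-1}M^tE - EM(M^tEM)^{-1}M^tE + EM(M^tEM)^{-1}(M^tEM)(M^tEM)^{-1}M^tE$, the last three terms collapse to $-EM(M^tEM)^{-1}M^tE$, giving the result. Everything else is routine substitution, and the well-posedness of $\S(u_b)$ follows from the full-rank property of $M$ noted above. I would close by remarking that the same derivation applies verbatim componentwise to the vector-valued stabilizer, so the two diagonal blocks $\kappa h^{-1}A$ and (up to the scaling convention in \eqref{EQ:ESM:01}) $\kappa A$ in Theorem \ref{THM:ESM} are both instances of \eqref{EQ:element-stiffness-matrix-S}.
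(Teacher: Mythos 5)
Your proposal is correct and follows essentially the same route as the paper: both derive the normal equations $M^tEM\,\alpha=M^tE\,X_{u_b}$ from \eqref{Def.extension}, identify $M(M^tEM)^{-1}M^tE$ as the $E$-weighted projection giving the midpoint values of $\S(u_b)$, and read off the matrix from the residual. The only difference is cosmetic: the paper collapses the bilinear form one-sidedly by invoking the orthogonality \eqref{Def.extension} with $v=\S(w_b)\in P_1(T)$ (so the cross term vanishes, as in \eqref{equ.SM.1}), whereas you expand the symmetric form $G^tEG$ and verify the cancellation directly --- the two computations encode the same projection identity.
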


\subsection{Element stiffness matrix for $(\nabla_w u_b,\nabla_w w_b)_T$} From the weak gradient formulation \eqref{DefWGpoly}, we have
\begin{eqnarray}
(\nabla_w u_b,\nabla_w w_b )_T
&=&(\displaystyle\frac{1}{|T|}\sum_{i=1}^N u_{b,i}\bm{n}_i|e_i|,\displaystyle\frac{1}{|T|}\sum_{j=1}^N w_{b,j}\bm{n}_j|e_j|)_{T}~~\label{equ.SM.2}\\
&=&(\displaystyle\frac{1}{|T|}\sum_{i=1}^N u_{b,i}\bm{n}_i|e_i|,\displaystyle\frac{1}{|T|} w_{b,j}\bm{n}_j|e_j|)_{T} \nonumber\\
&=&\sum_{i=1}^N \frac{|e_i||e_j|}{|T|}(\bm{n}_i\cdot\bm{n}_j) u_{b,i}w_{b,j},\nonumber \\
&=&\sum_{i=1}^N b_{i,j}u_{b,i}w_{b,j}.\nonumber
\end{eqnarray}
Thus, the corresponding element stiffness matrix is given by
\begin{equation}\label{EQ:element-stiffness-matrix-G}
B:=\{b_{i,j}\}_{i,j=1}^N,\; b_{i,j} = (\bm{n}_i\cdot\bm{n}_j)\displaystyle\frac{|e_i||e_j|}{|T|}.
\end{equation}

\subsection{Element stiffness matrix for $(\nabla_w\cdot \bm{u}_b,\chi)_T$} Note that $\chi\in P_0(T)$ is a constant. Thus, we have
\begin{eqnarray}
(\nabla_w\cdot \bm{u}_b,\chi)_T& =& \int_{T} \nabla_w\cdot \bm{u}_b \chi dT = \chi \int_{\partial T} \bm{u}_b \cdot \bm{n} ds  \label{equ.SM.5}\\
&=& \sum_{i=1}^N
\begin{bmatrix}
u_{b,i} \\
v_{b,i} \\
\end{bmatrix}
\cdot \bm{n}_i|e_i| \chi \nonumber\\
&=&
Q_1^t X_{u_b} \chi + Q_2^t X_{v_b} \chi,
\end{eqnarray}
which gives the contribution of $Q_1$ and $Q_2$ in the element stiffness matrix.

\subsection{Element load vector} For a computation of the element load vector corresponding to $(f^{(1)}, \S(w_b))_T$, let $w_{b}$ be the basis function of $W(T)$ corresponding to edge $e_j$ of $T$; i.e.,
\begin{equation*}
w_{b}=\left\{
\begin{array}{lllll}
1, \text{ on } e_j,\\
0, \text{ on other edges}.
\end{array}
\right.
\end{equation*}
The linear extension of $w_b$ is given by
$$
\S(w_b) = d_{1,j} + d_{2,j}(x-x_T) + d_{3,j}(y-y_T),
$$
where
\begin{equation*}
\begin{bmatrix}
d_{1,j}\\
d_{2,j}\\
d_{3,j}\\
\end{bmatrix}
=(M^tEM)^{-1}M^tE
\begin{bmatrix}
w_{b,1}\\
\vdots \\
w_{b,j}\\
\vdots \\
w_{b,N}\\
\end{bmatrix}
=(M^tEM)^{-1}M^tE
\begin{bmatrix}
0\\
\vdots \\
1\\
\vdots \\
0\\
\end{bmatrix}.
\end{equation*}
It follows that
\begin{equation}\label{equ.SM.3}
\begin{split}
(f^{(1)}, \S(w_b))_T
& = \int_{T}f^{(1)} \S(w_b)dT\\
& = \int_{T}f^{(1)}(x,y) (d_{1,j} + d_{2,j}(x-x_T) + d_{3,j}(y-y_T)) dT,
\end{split}
\end{equation}
from which an element load vector can be easily computed. In practical computation, the integral in \eqref{equ.SM.3} should be approximated numerically by some quadrature rules.

\section{SWG on Cartesian Grids}\label{sectionFDSWG}
Consider the Stokes problem (\ref{Equ.Stokes}) defined on the unit square domain $\Omega= (0,1)\times(0,1)$. A uniform mesh of the domain can be given by the Cartesian product of two one-dimensional grids:
\begin{eqnarray*}
x_{i+\mu}&=&(i+\mu-0.5)h,\qquad i=0,1,\cdots, n,\quad \mu=0,1/2,\\
y_{j+\mu}&=&(j+\mu-0.5)h, \qquad j=0,1,\cdots, n,\quad \mu=0,1/2,
\end{eqnarray*}
where $n$ is a positive integer and $h=1/n$ is the meshsize. Denote by
$$
T_{ij}:= [x_{i-\frac12}, x_{i+\frac12}]\times [y_{j-\frac12}, y_{j+\frac12}],\qquad i,j=1,\ldots, n
$$
the square element centered at $(x_i,y_j)$ for $i,j=1,\cdots, n$. The collection of all such elements forms a uniform square partition $\T_h$ of the domain. The collection of all the element edges is denoted as $\E_h$.

The goal of this section is to assemble the global stiffness matrix and the load vector for the SWG scheme associated with uniform rectangular partitions $\T_h$. The resulting matrix problem can be seen as a finite difference scheme for the Stokes equation. In particular, this will result in a new 5-point finite difference scheme when the stabilization parameter has the value of $\kappa=4$.

Let $T\in\T_h$ be a rectangular element depicted in Fig. \ref{fig:rectangular} with center $M_T=(x_T, y_T)$. From \eqref{EQ:alpha}, the linear extension of $u_b \in W(T)$ (i.e., $\S(u_b)$) can be verified to be (see Lemma 6.1 in \cite{LiDanWW} for details):
\begin{equation}\label{EQ:extension}
\S(u_b)=\alpha_0 + \alpha_1(x-x_T)+\alpha_2(y-y_T),
\end{equation}
where
\begin{equation}\label{EQ:S-Property:00222}
\left\{
\begin{array}{lllll}
\alpha_0 =\displaystyle\frac{|e_1|(u_{b,1} + u_{b,2}) + |e_3|(u_{b,3} +u_{b,4}) }{2|e_1|+2|e_3|},\\
\alpha_1 =(u_{b,2} - u_{b,1})/|e_3|,\\
\alpha_2 =(u_{b,4} - u_{b,3})/|e_1|.
\end{array}
\right.
\end{equation}
From \eqref{EQ:extension} we have
\begin{equation}\label{EQ:S-Property:002}
\begin{split}
({\S}(v_b)-v_b)(M_1) & =({\S}(v_b)-v_b)(M_2)\\
& = \frac{|e_3|}{2(|e_1|+|e_3|)}(v_{b3}+v_{b4}-v_{b1}-v_{b2}),\\
\end{split}
\end{equation}
\begin{equation}\label{EQ:S-Property:008}
\begin{split}
({\S}(v_b)-v_b)(M_3) & =({\S}(v_b)-v_b)(M_4)\\
& =- \frac{|e_1|}{2(|e_1|+|e_3|)}(v_{b3}+v_{b4}-v_{b1}-v_{b2}).
\end{split}
\end{equation}
Hence,
\begin{equation}\label{EQ:S-Property:021}
|e_1|({\S}(v_b)-v_b)(M_1) = - |e_3| ({\S}(v_b)-v_b)(M_3).
\end{equation}
A by-product of \eqref{EQ:S-Property:00222} is the following estimate:
\begin{equation}\label{EQ:S-Property:0212}
\|\nabla \S(v)\|_T \leq C \|\nabla_w v\|_T.
\end{equation}
In addition, the linear function $\S(v_b)-v_b$ satisfies
\begin{equation}\label{EQ:July01:001}
\begin{split}
(\S(v_b)-v_b)|_{e_1}(y) = &\ (\S(v_b)-v_b)|_{e_2}(y),\\
(\S(v_b)-v_b)|_{e_3}(x) = &\ (\S(v_b)-v_b)|_{e_4}(x).
\end{split}
\end{equation}

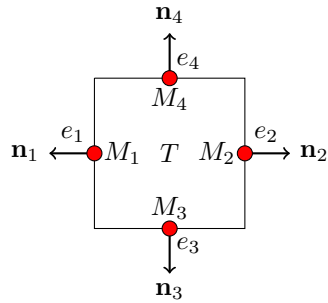
\begin{figure}[!h]
\begin{center}
\begin{tikzpicture}[rotate=0]
    \path (0,0) coordinate (A1);
    \path (2,0) coordinate (A2);
    \path (2,2) coordinate (A3);
    \path (0,2) coordinate (A4);

    \path (1.0,1.0) coordinate (center);

    \path (1.0,0) coordinate (A1half);
    \path (2,1)   coordinate (A2half);
    \path (1.0,2) coordinate (A3half);
    \path (0,1)   coordinate (A4half);

    \path (1.25,0) coordinate (A1halfe);
    \path (2,1.25)  coordinate (A2halfe);
    \path (1.25,2) coordinate (A3halfe);
    \path (0,1.25)  coordinate (A4halfe);

    \path (A1half) ++(0, -0.6)  coordinate (A1To);
    \path (A2half) ++(0.6,  0)  coordinate (A2To);
    \path (A3half) ++(0,  0.6)  coordinate (A3To);
    \path (A4half) ++(-0.6, 0)  coordinate (A4To);
    \draw (A4) -- (A1) -- (A2) -- (A3)--(A4);

    \draw node at (center) {$T$};
    \draw node[above] at (A1half) {$M_{3}$};
    \draw node[left]  at (A2half) {$M_{2}$};
    \draw node[below] at (A3half) {$M_{4}$};
    \draw node[right] at (A4half) {$M_{1}$};

    \draw node[below] at (A1halfe) {$e_{3}$};
    \draw node[right]  at (A2halfe) {$e_{2}$};
    \draw node[above] at (A3halfe) {$e_{4}$};
    \draw node[left] at (A4halfe) {$e_{1}$};

    \draw[->,thick] (A1half) -- (A1To) node[below]{$\mathbf{n}_3$};
    \draw[->,thick] (A2half) -- (A2To) node[right]{$\mathbf{n}_2$};
    \draw[->,thick] (A3half) -- (A3To) node[above]{$\mathbf{n}_4$};
    \draw[->,thick] (A4half) -- (A4To) node[left]{$\mathbf{n}_1$};

    \draw [fill=red] (A1half) circle (0.1cm);
    \draw [fill=red] (A2half) circle (0.1cm);
    \draw [fill=red] (A3half) circle (0.1cm);
    \draw [fill=red] (A4half) circle (0.1cm);
\end{tikzpicture}
\end{center}
\caption{An illustrative square element.}
\label{fig:rectangular}
\end{figure}

\subsection{A 7-point finite difference scheme}
Two sets of grid points can be generated by using the 1-d grid points $\{x_i\}$ and $\{y_j\}$. The first consists of the mid-points of all edges in $\E_h$ (i.e., the dotted points colored in red in Figure \ref{fig.Dof_Stokes}), which are used to approximate the velocity field. The second set of grid points consists of all cell centers (i.e., the dotted points colored in blue in Figure \ref{fig.Dof_Stokes}), which are used to approximate the pressure unknown.

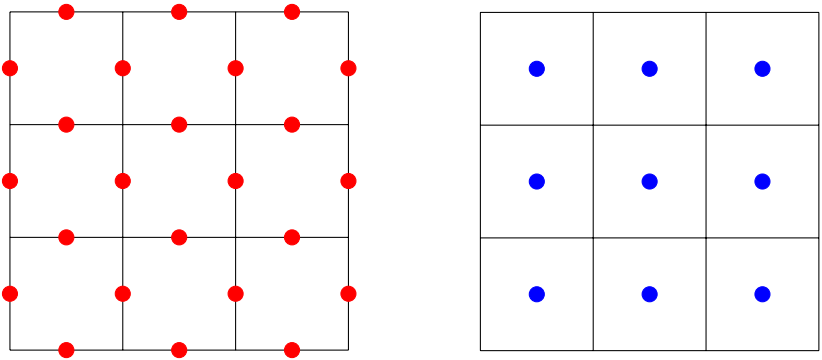
\begin{figure}[!h]
\begin{center}
\subfigure
{
\label{Fig.stokesdof.lu}
\begin{tikzpicture}
    \path (0.75,0) coordinate (A11);
    \path (2.25,0) coordinate (A12);
    \path (3.75,0) coordinate (A13);

    \path (0.75,1.5) coordinate (A21);
    \path (2.25,1.5) coordinate (A22);
    \path (3.75,1.5) coordinate (A23);

    \path (0.75,3) coordinate (A31);
    \path (2.25,3) coordinate (A32);
    \path (3.75,3) coordinate (A33);

    \path (0.75,4.5) coordinate (A41);
    \path (2.25,4.5) coordinate (A42);
    \path (3.75,4.5) coordinate (A43);

    \path (0,0.75)   coordinate (B11);
    \path (1.5,0.75) coordinate (B12);
    \path (3,0.75)   coordinate (B13);
    \path (4.5,0.75) coordinate (B14);

    \path (0,2.25)   coordinate (B21);
    \path (1.5,2.25) coordinate (B22);
    \path (3.0,2.25) coordinate (B23);
    \path (4.5,2.25) coordinate (B24);

    \path (0,3.75)   coordinate (B31);
    \path (1.5,3.75) coordinate (B32);
    \path (3.0,3.75) coordinate (B33);
    \path (4.5,3.75) coordinate (B34);

    \draw[step=1.5] (0,0) grid (4.5,4.5);

    \filldraw [fill=red,draw=red] (A11) circle (0.1cm);
    \filldraw [fill=red,draw=red] (A12) circle (0.1cm);
    \filldraw [fill=red,draw=red] (A13) circle (0.1cm);

    \filldraw [fill=red,draw=red] (A21) circle (0.1cm);
    \filldraw [fill=red,draw=red] (A22) circle (0.1cm);
    \filldraw [fill=red,draw=red] (A23) circle (0.1cm);

    \filldraw [fill=red,draw=red] (A31) circle (0.1cm);
    \filldraw [fill=red,draw=red] (A32) circle (0.1cm);
    \filldraw [fill=red,draw=red] (A33) circle (0.1cm);

    \filldraw [fill=red,draw=red] (A41) circle (0.1cm);
    \filldraw [fill=red,draw=red] (A42) circle (0.1cm);
    \filldraw [fill=red,draw=red] (A43) circle (0.1cm);

    \filldraw [fill=red,draw=red] (B11) circle (0.1cm);
    \filldraw [fill=red,draw=red] (B12) circle (0.1cm);
    \filldraw [fill=red,draw=red] (B13) circle (0.1cm);
    \filldraw [fill=red,draw=red] (B14) circle (0.1cm);

    \filldraw [fill=red,draw=red] (B21) circle (0.1cm);
    \filldraw [fill=red,draw=red] (B22) circle (0.1cm);
    \filldraw [fill=red,draw=red] (B23) circle (0.1cm);
    \filldraw [fill=red,draw=red] (B24) circle (0.1cm);

    \filldraw [fill=red,draw=red] (B31) circle (0.1cm);
    \filldraw [fill=red,draw=red] (B32) circle (0.1cm);
    \filldraw [fill=red,draw=red] (B33) circle (0.1cm);
    \filldraw [fill=red,draw=red] (B34) circle (0.1cm);
\end{tikzpicture}
}\qquad\qquad
\subfigure
{
\label{Fig.stokesdof.lp}
\begin{tikzpicture}
    \path (0.75,0.75) coordinate (A11);
    \path (2.25,0.75) coordinate (A12);
    \path (3.75,0.75) coordinate (A13);

    \path (0.75,2.25) coordinate (A21);
    \path (2.25,2.25) coordinate (A22);
    \path (3.75,2.25) coordinate (A23);

    \path (0.75,3.75) coordinate (A31);
    \path (2.25,3.75) coordinate (A32);
    \path (3.75,3.75) coordinate (A33);

    \path (3.75,-0.1) coordinate (A14);
    \path (3.75,4.6) coordinate (A15);
    \draw[step=1.5] (0,0) grid (4.5,4.5);

    \filldraw [fill=blue,draw=blue] (A11) circle (0.1cm);
    \filldraw [fill=blue,draw=blue] (A12) circle (0.1cm);
    \filldraw [fill=blue,draw=blue] (A13) circle (0.1cm);

    \filldraw [fill=blue,draw=blue] (A21) circle (0.1cm);
    \filldraw [fill=blue,draw=blue] (A22) circle (0.1cm);
    \filldraw [fill=blue,draw=blue] (A23) circle (0.1cm);

    \filldraw [fill=blue,draw=blue] (A31) circle (0.1cm);
    \filldraw [fill=blue,draw=blue] (A32) circle (0.1cm);
    \filldraw [fill=blue,draw=blue] (A33) circle (0.1cm);
\end{tikzpicture}
}
\end{center}
\caption{Grid points for the Stokes equation: (a) for the velocity (left, red dots), and (b) for the pressure (right, blue dots).}
\label{fig.Dof_Stokes}
\end{figure}

Let $\bm{u}_{k\ell}=(u_{k\ell}, v_{k\ell})^t$ be the velocity approximation at the red-dotted grid points $(x_k, y_\ell)$ and $p_{ij}$ be the approximate pressure at the blue-dotted grid points $(x_i,y_j)$. It can be seen that a red-dotted grid point $(x_k, y_\ell)$ is located on the boundary if either $k$ or $\ell$ takes the value of $\frac12$ or $n+\frac12$. The following is the main result of this section.

\begin{theorem}\label{DF-equi-SWG-new}
The simplified weak Galerkin scheme \eqref{equ.Stokes-FD-SWG} is algebraically equivalent to the following finite difference scheme on uniform square partitions:
\begin{equation}\label{equ.Stokes_FD-NEW}
\left\{
\begin{split}
&{c_1 \bu_{i+\frac{3}{2},j} + c_2 \bu_{i+\frac{1}{2},j} + c_3 \bu_{i-\frac{1}{2},j} + c_4[\bu_{i+1,j-\frac{1}{2}} + \bu_{i+1,j+\frac{1}{2}} + \bu_{i,j-\frac{1}{2}}+ \bu_{i,j+\frac{1}{2}}]} \\
& \qquad \qquad  +
h\begin{bmatrix}
  {p_{i+1,j}-p_{i,j}} \\
  0
\end{bmatrix}
=\frac{h^2}{2}\bbf_{i+\frac{1}{2},j},\\
&
{c_1 \bu_{i,j+\frac{3}{2}} + c_2\bu_{i,j+\frac{1}{2}} + c_3\bu_{i,j-\frac{1}{2}} + c_4[\bu_{i-\frac{1}{2},j+1} + \bu_{i+\frac{1}{2},j+1} +\bu_{i-\frac{1}{2},j}+\bu_{i+\frac{1}{2},j}]}\\
& \qquad \qquad  + h
\begin{bmatrix}
  0 \\
  {p_{i,j+1}-p_{i,j}}
\end{bmatrix}
=\frac{h^2}{2}\bbf_{i,j+\frac{1}{2}}\\
&h[u_{i+\frac{1}{2},j} - u_{i-\frac{1}{2},j}] + h[v_{i,j+\frac{1}{2}} - v_{i,j-\frac{1}{2}}]=0.
\end{split}
\right.
\end{equation}
where
$ c_1=c_3=\displaystyle(\frac{\kappa}{4}-1),\; c_2=\displaystyle\frac{\kappa}{2}+2,\; c_4=\displaystyle-\frac{\kappa}{4}$, $\kappa>0$ is the stabilization parameter. For $\kappa=4$, we have $c_1=c_3=0$, $c_2=4$ and $c_4=-1$ so that the scheme \eqref{equ.Stokes_FD-NEW} gives rise to the five-point finite difference scheme \eqref{equ.Stokes_FDS}.
\end{theorem}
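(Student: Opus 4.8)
The argument is a specialization of the element formulas in Theorem~\ref{THM:ESM} to a uniform square, followed by an edge-by-edge assembly. First I would fix a generic cell $T_{ij}$ of side $h$ and record its local data with the edge numbering of Fig.~\ref{fig:rectangular}: $|e_i|=h$, $|T_{ij}|=h^2$, and the outward normals $\bn_1=-\bn_2=(-1,0)^t$, $\bn_3=-\bn_4=(0,-1)^t$. Then the weak-gradient block $B=\{\bn_i\cdot\bn_j\,|e_i||e_j|/|T|\}$ collapses to the constant matrix that is block-diagonal in the pairs $\{e_1,e_2\}$ and $\{e_3,e_4\}$, each $2\times 2$ block being $\left(\begin{smallmatrix}1&-1\\-1&1\end{smallmatrix}\right)$; the pressure-coupling vectors of Theorem~\ref{THM:ESM} become $Q_1=(-h,h,0,0)^t$ and $Q_2=(0,0,-h,h)^t$; and, using \eqref{EQ:S-Property:002}--\eqref{EQ:S-Property:008} (equivalently, by evaluating $E-EM(M^tEM)^{-1}M^tE$ directly), the stabilizer residual satisfies $\S(u_b)(M_i)-u_{b,i}=\pm\tfrac14(u_{b,3}+u_{b,4}-u_{b,1}-u_{b,2})$, so that $h^{-1}A=\tfrac14\,ww^t$ with $w=(-1,-1,1,1)^t$ and the momentum block $\kappa h^{-1}A+B$ (the factor $\kappa h^{-1}$ appearing in both velocity components, as dictated by \eqref{Def-ST-poly}) becomes the explicit $4\times 4$ matrix $\tfrac\kappa4\,ww^t+B$. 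For the load I would invoke the quadrature remark after \eqref{equ.SM.3}: since $\int_T(x-x_T)\,dT=\int_T(y-y_T)\,dT=0$ and the constant term of $\S(w_b)$ for every edge basis equals $\tfrac14$ by \eqref{EQ:S-Property:00222}, one gets $(f^{(1)},\S(w_b))_T\approx\tfrac14 h^2 f^{(1)}$ with $f^{(1)}$ frozen at the relevant point.

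Next I would assemble \eqref{equ.Stokes-FD-SWG-single} one unknown at a time. A generic interior vertical-edge midpoint $(x_{i+\frac12},y_j)$ lies in exactly two cells: in $T_{ij}$ it is the right edge $e_2$, and in $T_{i+1,j}$ it is the left edge $e_1$; the four midpoints of $T_{ij}$ map to the global unknowns at $(x_{i-\frac12},y_j),(x_{i+\frac12},y_j),(x_i,y_{j-\frac12}),(x_i,y_{j+\frac12})$, and similarly for $T_{i+1,j}$. Testing the first line of \eqref{equ.Stokes-FD-SWG-single} against the basis function of that edge, the stabilizer-plus-gradient part equals row $2$ of $\tfrac\kappa4\,ww^t+B$ applied to the $T_{ij}$-local velocity vector plus row $1$ of the same matrix applied to the $T_{i+1,j}$-local velocity vector; adding these produces coefficient $\tfrac\kappa4-1$ at $(x_{i+\frac32},y_j)$ and at $(x_{i-\frac12},y_j)$, coefficient $\tfrac\kappa2+2$ at $(x_{i+\frac12},y_j)$, and coefficient $-\tfrac\kappa4$ at each of $(x_i,y_{j\pm\frac12})$ and $(x_{i+1},y_{j\pm\frac12})$ --- exactly $c_1,c_3,c_2,c_4$. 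The pressure term $-(\nabla_w\cdot[w_b,0]^t,p_h)$ contributes $-hp_{i,j}$ from $T_{ij}$ and $+hp_{i+1,j}$ from $T_{i+1,j}$ (and nothing in the $v$-component, since $(0,1)^t\cdot\bn_{1,2}=0$), giving $h(p_{i+1,j}-p_{i,j})$, and the loads sum to $\tfrac{h^2}{2}f^{(1)}_{i+\frac12,j}$; this is the first vector equation of \eqref{equ.Stokes_FD-NEW}, with its $v$-component pressure-free as claimed. The second equation follows verbatim by the $x\leftrightarrow y$, $u\leftrightarrow v$ symmetry (the horizontal edge $(x_i,y_{j+\frac12})$ being the top edge $e_4$ of $T_{ij}$ and the bottom edge $e_3$ of $T_{i,j+1}$). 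The mass equation is immediate from \eqref{equ.weak.divergence.new}: testing the third line of \eqref{equ.Stokes-FD-SWG-single} against the indicator of $T_{ij}$ gives $h(u_{i+\frac12,j}-u_{i-\frac12,j})+h(v_{i,j+\frac12}-v_{i,j-\frac12})=0$. Finally, since $\bu_b\in[W_0(\T_h)]^2$ the boundary edge unknowns vanish, so each stencil identity holds at interior edge midpoints only; setting $\kappa=4$ yields $c_1=c_3=0$, $c_2=4$, $c_4=-1$, i.e.\ the five-point scheme \eqref{equ.Stokes_FDS}.

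The only genuinely delicate ingredient is the bookkeeping in the assembly step: for each of the two cells meeting at an edge one must identify which local index ($e_1$ versus $e_2$, or $e_3$ versus $e_4$) the shared edge carries, rename the remaining three local midpoints into the global half-integer indices, and keep track of the fact that the outward normal flips sign between the two cells, so that the pressure-coupling and divergence contributions come out with the correct signs. Everything else --- the $4\times 4$ matrix arithmetic, the rank-one collapse of the stabilizer on a square, and the load quadrature --- is routine once the local data are in hand; I would also state at the outset the quadrature convention under which the right-hand side is exactly $\tfrac{h^2}{2}\bbf$, since the asserted algebraic equivalence holds only relative to that choice.
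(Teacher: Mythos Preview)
Your proposal is correct and follows essentially the same route as the paper: compute the local stabilizer, weak-gradient, pressure-coupling, and divergence contributions on a single square, then assemble across the two cells sharing an interior edge to read off the stencil coefficients $c_1,\dots,c_4$. The only presentational difference is that you package the local data via the matrices $A,B,Q_1,Q_2$ of Theorem~\ref{THM:ESM} and extract the relevant rows, whereas the paper evaluates each bilinear form directly for the single test basis on edge $e_1$; for the load the paper carries out the Simpson/midpoint quadrature explicitly to exhibit the $O(h^4)$ remainder, which refines your (correctly flagged) remark that the equality $\sum_T(f^{(k)},\S(w_b))_T=\tfrac{h^2}{2}f^{(k)}_{\cdot}$ holds only modulo a quadrature convention.
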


The rest of this subsection is devoted to a derivation of the finite difference scheme  \eqref{equ.Stokes_FD-NEW}. To this end, let $w_b$ be the basis function of $W(T)$ corresponding to edge $e_1$ of $T$ (see Fig. \ref{fig:rectangular}) so that
\begin{equation*}
w_b=\left\{
\begin{array}{lllll}
1,\; \text{ on } e_1,\\
0,\; \text{ on } e_i, \, i=2,3,4.\\
\end{array}
\right.
\end{equation*}
On square elements where $|e_1|=|e_3|$, from \eqref{EQ:S-Property:008} it is not hard to see that
\begin{eqnarray*}
(w_b-\S(w_b))|_{M_1} &=&(w_b-\S(w_b))|_{M_2} =\displaystyle\frac{1}{4},\\
(w_b-\S(w_b))|_{M_3} &=&(w_b-\S(w_b))|_{M_4} =-\displaystyle\frac{1}{4}.
\end{eqnarray*}
Thus, we have
\begin{eqnarray}
\langle u_b- \S(u_b), w_b-\S(w_b)\rangle_{\partial T}&=&\langle u_b, w_b-\S(w_b)\rangle_{\partial T}\label{equ.SM.square.1}\\
&=&\sum_{i=1}^4 |e_i|u_{b,i}(w_b-\S(w_b))|_{M_i}\nonumber\\
&=&\displaystyle\frac{h}{4}(u_{b,1}+u_{b,2} -u_{b,3}-u_{b,4}),\nonumber
\end{eqnarray}
\begin{eqnarray}
(\nabla_w u_b, \nabla_w w_b)_{T} &= &\left(\displaystyle\frac{1}{|T|}\sum_{i=1}^4 u_{b,i}|e_i|\bm{n}_i,\displaystyle\frac{1}{|T|}\sum_{i=1}^4 w_{b,i}|e_i|\bm{n}_i\right)\label{equ.SM.square.2}\\
&=&\left(
\displaystyle\frac{1}{h}\left[
\begin{array}{lllll}
u_{b,2} - u_{b,1} \\
u_{b,4} - u_{b,3} \\
\end{array}
\right],
\displaystyle\frac{1}{h}\left[
\begin{array}{lllll}
0 - 1\\
0 - 0\\
\end{array}
\right]
\right)_T \nonumber\\
&=&-|T|\left(\frac{1}{h}\right)^2(u_{b,2}-u_{b,1})\nonumber\\&=&u_{b,1}-u_{b,2},\nonumber
\end{eqnarray}
\begin{eqnarray}
(f^{(1)}, \S(w_b))_T  &=&\displaystyle\int_T f^{(1)} \left(\frac{1}{4}-\frac{1}{h}(x-x_T)\right)dT~~~~~~~~~~~~~~~~~~ \label{equ.SM.square.3}\\
                 &=&\displaystyle \frac{h^2}{6} \left[\frac{3}{4}f^{(1)}|_{M_1} +f^{(1)}|_{M_T} - \frac{1}{4}f^{(1)}|_{M_2} \right] + \O(h^4).\nonumber
\end{eqnarray}
Analogously, the same techniques can be applied to the computation of $S_T(v_b,w_b)$, $(\nabla_w v_b,\nabla_w w_b )_T$, and $(f^{(2)}, \S(w_b))_T$.

Next, let $\bm{v}_b$ be the vector basis function of $[W(T)]^2$ associated with edge $e_1$, that is
\begin{equation*}
\bm{v}_b =
\begin{bmatrix}
w_b\\
0 \\
\end{bmatrix},\,
\text{ or }
\bm{v}_b =
\begin{bmatrix}
0 \\
w_b \\
\end{bmatrix}, \text{ with }
w_b=\left\{
\begin{array}{lllll}
1, \text{ on } e_1,\\
0, \text{ on other edges}.\\
\end{array}
\right.
\end{equation*}
Then we have
\begin{eqnarray}
(\nabla_w\cdot \bm{v}_b,p_h)_T
&=& p_h |e_1|
\begin{bmatrix}
1 \\
0 \\
\end{bmatrix} \cdot \bm{n}_1,\;
\text{ or }
= p_h |e_1|
\begin{bmatrix}
0 \\
1 \\
\end{bmatrix} \cdot \bm{n}_1 \label{equ.SM.square.4}\\
&=& -hp_h,\; \text{ or } =0.\nonumber
\end{eqnarray}
\begin{eqnarray}
(\nabla_w\cdot \bm{u}_b,\chi)_T &=&
\sum_{i=1}^N u_{b,i}
\begin{bmatrix}
1 \\
0 \\
\end{bmatrix}\cdot \bm{n}_i|e_i|
+
\sum_{i=1}^N v_{b,i}
\begin{bmatrix}
0 \\
1 \\
\end{bmatrix}\cdot \bm{n}_i|e_i| \label{equ.SM.square.5}\\
&=&h(u_{b,2}-u_{b,1}) + h(v_{b,4}-v_{b,3}).\nonumber
\end{eqnarray}
Here $\chi$ is the characteristic function of the element $T$.
Equations \eqref{equ.SM.square.1}, \eqref{equ.SM.square.2}, \eqref{equ.SM.square.3}, \eqref{equ.SM.square.4} and \eqref{equ.SM.square.5} comprise the discrete scheme corresponding to the basis function $w_b$ on edge $e_1$ of the element $T$:
\begin{equation}\label{ElementStencilstokes}
\left\{
\renewcommand\arraystretch{2}
\begin{array}{lllll}
\displaystyle \frac{\kappa}{4}(u_{b,1}+u_{b,2} -u_{b,3}-u_{b,4})+ u_{b,1}-u_{b,2} -h p_h = (f^{(1)}, \S(w_b))_T \\
\displaystyle \frac{\kappa}{4}(v_{b,1}+v_{b,2} -v_{b,3}-v_{b,4})+ v_{b,1}-v_{b,2}  = (f^{(2)}, \S(w_b))_T\\
h(u_{b,2}-u_{b,1}) + h(v_{b,4}-v_{b,3})=0
\end{array}
\right.
\end{equation}

\begin{figure}[!h]
\begin{center}
\subfigure[Stencil for $u_{i+\frac{1}{2},j}$]{
\label{Fig.sub.ldofswgx}
\tikzset{global scale/.style={
         scale=#1,
         every node/.append style ={scale=#1}
         }
}
\begin{tikzpicture}[global scale =0.75]
\draw[step=3] (0,0) grid (6,3);

\path (0,1.5) coordinate (A11);
\path (3,1.5) coordinate (A12);
\path (6,1.5) coordinate (A13);

\path (1.5,0) coordinate (B11);
\path (4.5,0) coordinate (B12);

\path (1.5,3) coordinate (B21);
\path (4.5,3) coordinate (B22);

    \draw [fill=red] (A11) circle (0.1cm);
    \draw [fill=blue] (A12) circle (0.15cm);
    \draw [fill=red] (A13) circle (0.1cm);

    \draw [fill=red] (B11) circle (0.1cm);
    \draw [fill=red] (B12) circle (0.1cm);

    \draw [fill=red] (B21) circle (0.1cm);
    \draw [fill=red] (B22) circle (0.1cm);

    \path (1.5,1.5) coordinate (center1);
    \path (4.5,1.5) coordinate (center2);

    \draw node[left] at (A11) {$i-\frac{1}{2},j$};
    \draw node[above] at (A12) {$u_{i+\frac{1}{2},j}$};
    \draw node[right] at (A13) {$i+\frac{3}{2},j$};

    \draw node[below] at (B11) {$i,j-\frac{1}{2}$};
    \draw node[below] at (B12) {$i+1,j-\frac{1}{2}$};
    \draw node[above] at (B21) {$i,j+\frac{1}{2}$};
    \draw node[above] at (B22) {$i+1,j+\frac{1}{2}$};

    \draw node at (center1) {$T_{i,j}$};
    \draw node at (center2) {$T_{i+1,j}$};

\end{tikzpicture}
}
\subfigure[Stencil for $u_{i,j+\frac{1}{2}}$]{
\label{Fig.sub.ldofswgy}
\tikzset{global scale/.style={
         scale=#1,
         every node/.append style ={scale=#1}
         }
}
\begin{tikzpicture}[global scale =0.75]
\draw[step=3] (0,0) grid (3,6);

\path (1.5,0) coordinate (A11);
\path (1.5,3) coordinate (A12);
\path (1.5,6) coordinate (A13);

\path (0,1.5) coordinate (B11);
\path (0,4.5) coordinate (B12);

\path (3,1.5) coordinate (B21);
\path (3,4.5) coordinate (B22);

    \draw [fill=red] (A11) circle (0.1cm);
    \draw [fill=blue] (A12) circle (0.15cm);
    \draw [fill=red] (A13) circle (0.1cm);

    \draw [fill=red] (B11) circle (0.1cm);
    \draw [fill=red] (B12) circle (0.1cm);

    \draw [fill=red] (B21) circle (0.1cm);
    \draw [fill=red] (B22) circle (0.1cm);

    \path (1.5,1.5) coordinate (center1);
    \path (1.5,4.5) coordinate (center2);

    \draw node[below]  at (A11) {$i,j-\frac{1}{2}$};
    \draw node[above]  at (A12) {$u_{i,j+\frac{1}{2}}$};
    \draw node[above]  at (A13) {$i,j+\frac{3}{2}$};

    \draw node[left] at (B11)  {$i-\frac{1}{2},j$};
    \draw node[left] at (B12)  {$i-\frac{1}{2},j+1$};
    \draw node[right] at (B21) {$i+\frac{1}{2},j$};
    \draw node[right] at (B22) {$i+\frac{1}{2},j+1$};

    \draw node at (center1) {$T_{i,j}$};
    \draw node at (center2) {$T_{i,j+1}$};

\end{tikzpicture}
}
\end{center}
\caption{\label{Fig.DF.stencils}Stencils for the finite difference scheme \eqref{equ.Stokes_FD-NEW}.}
\end{figure}

The two local equations \eqref{ElementStencilstokes} corresponding to the elements $T_{i,j}$ and $T_{i+1,j}$ that share $e_{i+\frac{1}{2},j}$ as a common edge (see Fig. \ref{Fig.sub.ldofswgx}) are given by
\begin{equation}\label{equ.ES.1}
\begin{split}
& \frac{\kappa}{4}(u_{i+\frac{1}{2},j}+u_{i+\frac{3}{2},j} -u_{i+1,j-\frac{1}{2}}-u_{i+1,j+\frac{1}{2}})+ u_{i+\frac{1}{2},j}-u_{i+\frac{3}{2},j} +h p_{i+1,j}\\
=&\int_{T_{i+1,j}} f^{(1)} \S(w_b)dx
\end{split}
\end{equation}
and
\begin{equation}\label{equ.ES.2}
\begin{split}
&\frac{\kappa}{4}(u_{i+\frac{1}{2},j}+u_{i-\frac{1}{2},j} -u_{i,j-\frac{1}{2}}-u_{i,j+\frac{1}{2}})+ u_{i+\frac{1}{2},j}-u_{i-\frac{1}{2},j} -h p_{i,j}\\
=&\int_{T_{i,j}} f^{(1)} \S(w_b)dx.
\end{split}
\end{equation}
Summing up the equations (\ref{equ.ES.1}) and (\ref{equ.ES.2}) yields the following global linear equation corresponding to the degree of freedom $u_{i+\frac{1}{2},j}$:
\begin{equation}\label{equ.DFSWG.1}
\begin{split}
& c_1u_{i+\frac{3}{2},j}+ c_2u_{i+\frac{1}{2},j}+ c_3u_{i-\frac{1}{2},j} +c_4[u_{i+1,j-\frac{1}{2}}+u_{i+1,j+\frac{1}{2}}+u_{i,j-\frac{1}{2}}
+u_{i,j+\frac{1}{2}}] \\
& \quad +h(p_{i+1,j}-p_{i,j})\\
 = & \int_{T_{i,j}\bigcup T_{i+1,j}} f^{(1)} \S(w_b)dT,
\end{split}
\end{equation}
where $\displaystyle c_1=c_3 = \frac{\kappa}{4}-1$, $\displaystyle c_2 = \frac{\kappa}{2}+2$, $\displaystyle c_4=-\frac{\kappa}{4}$.

The right-hand side of \eqref{equ.DFSWG.1} can be approximated by using proper numerical integrations (the Simpson rule in the $x$- direction and the midpoint rule in the $y$- direction) as follows:
\begin{eqnarray}
&&\int_{T_{i,j}\bigcup T_{i+1,j}} f^{(1)} \S(w_b)dT \nonumber\\ &=&\int_{T_{i,j}}f^{(1)}\S(w_{b,(i+\frac{1}{2},j)})|_{T_{i,j}} dT+ \int_{T_{i+1,j}}f^{(1)}\S(w_{b,(i+\frac{1}{2},j)})|_{T_{i+1,j}}dT \nonumber\\
&=&\frac{h^2}{6}[f^{(1)} \S(w_{b})|_{M_{i-\frac{1}{2},j}} + 4f^{(1)}\S(w_{b})|_{M_{i,j}} + f^{(1)}\S(w_{b})|_{M_{i+\frac{1}{2},j}}]\nonumber\\
&&+\frac{h^2}{6}[f^{(1)}\S(w_{b})|_{M_{i+\frac{1}{2},j}} + 4f^{(1)}\S(w_{b})|_{M_{i+1,j}} + f^{(1)}\S(w_{b})|_{M_{i+\frac{3}{2},j}}]+\O(h^4) \label{equ.Simpson}\\
&=& \frac{h^2}{6}[-\frac{1}{4}f^{(1)}_{i-\frac{1}{2},j} + f^{(1)}_{i,j} + \frac{3}{4}f^{(1)}_{i+\frac{1}{2},j}]+\frac{h^2}{6}[\frac{3}{4}
f^{(1)}_{i+\frac{1}{2},j} + f^{(1)}_{i+1,j} - \frac{1}{4}f^{(1)}_{i+\frac{3}{2},j}]+\O(h^4)\nonumber\\
&=&\frac{h^2}{24}[-f^{(1)}_{i-\frac{1}{2},j} + 4f^{(1)}_{i,j} +6f^{(1)}_{i+\frac{1}{2},j} +4f^{(1)}_{i+1,j}-f^{(1)}_{i+\frac{3}{2},j}] +\O(h^4) \nonumber \\
&= &\frac{h^2}{2}f^{(1)}_{i+\frac{1}{2},j} + \O(h^4), \nonumber 
\end{eqnarray}
where we have used the fact that
\begin{eqnarray*}
\S(w_{b,(i+\frac{1}{2},j)})|_{T_{i,j}} &=&\frac{1}{4} + \frac{1}{h} (x-x_{T_{i,j}}),\\
\S(w_{b,(i+\frac{1}{2},j)})|_{T_{i+1,j}} &=&\frac{1}{4} - \frac{1}{h}(x-x_{T_{i+1,j}}).
\end{eqnarray*}
It follows that \eqref{equ.DFSWG.1} can be rewritten as
\begin{equation}\label{equ.DFSWG.1.new}
\begin{split}
& c_1u_{i+\frac{3}{2},j}+ c_2u_{i+\frac{1}{2},j}+ c_3u_{i-\frac{1}{2},j} +c_4[u_{i+1,j-\frac{1}{2}}+u_{i+1,j+\frac{1}{2}}+u_{i,j-\frac{1}{2}}
+u_{i,j+\frac{1}{2}}] \\
& \quad +h(p_{i+1,j}-p_{i,j})\\
 = & \frac{h^2}{2}f^{(1)}_{i+\frac{1}{2},j} + \O(h^4).
\end{split}
\end{equation}
Analogously, for the degree of freedom $u_{i,j+\frac{1}{2}}$, we may obtain
\begin{equation}\label{equ.DFSWG.2}
 \begin{split}
 &c_1u_{i,j+\frac{3}{2}} + c_2u_{i,j+\frac{1}{2}} + c_3u_{i,j-\frac{1}{2}} + c_4[u_{i-\frac{1}{2},j+1}+u_{i+\frac{1}{2},j+1} +u_{i-\frac{1}{2},j}+u_{i+\frac{1}{2},j})] \\
 &\quad =\frac{h^2}{2}f^{(1)}_{i,j+\frac{1}{2}} + \O(h^4).
\end{split}
\end{equation}
The stencil for the unknown $u$, or more precisely for $u_{i+\frac{1}{2},j}$ (respectively $u_{i,j+\frac{1}{2}}$ ) is the seven dotted-points shown in Fig. \ref{Fig.DF.stencils}(a) (respectively Fig. \ref{Fig.DF.stencils}(b)) with weights
\begin{equation}\label{EQ:weight-7points}
A=(\displaystyle\frac{\kappa}{4}-1,\displaystyle\frac{\kappa}{2}+2,
\displaystyle\frac{\kappa}{4}-1,-\displaystyle\frac{\kappa}{4},
-\displaystyle\frac{\kappa}{4},-\displaystyle\frac{\kappa}{4},
-\displaystyle\frac{\kappa}{4})
\end{equation}
for $\kappa>0$.

The same calculation can be carried out for the second component $v$ of the velocity variable; details are omitted.

\subsection{A 5-point finite difference Scheme}\label{subsection-5point}

For the particular value of $\kappa=4$, the weight $A$ in \eqref{EQ:weight-7points} for the 7-point stencil becomes to be
\begin{equation}\label{EQ:weight-7points-s}
A=(0, 4, 0, -1, -1, -1, -1)
\end{equation}
so that \eqref{equ.Stokes_FD-NEW} is reduced to a 5-point finite difference scheme described as follows:

\begin{FD-algorithm}
Find $\bm{u}_{k\ell}$ and $p_{ij}$ such that (1) the discrete homogeneous Dirichlet boundary condition of $\bm{u}_{k\ell}=0$ is satisfied at all the red dotted grid points $(x_k, y_\ell)$ on the domain boundary, and (2) the following set of finite difference equations are satisfied:

\setlength{\arraycolsep}{0.5pt}
\begin{equation}\label{equ.Stokes_FDS}
\left\{
\begin{split}
\frac{4\bu_{i+\frac{1}{2},j}-\bu_{i+1,j-\frac{1}{2}} - \bu_{i+1,j+\frac{1}{2}} - \bu_{i,j-\frac{1}{2}}- \bu_{i,j+\frac{1}{2}}}{h^2} +
\begin{bmatrix}
  \frac{p_{i+1,j}-p_{i,j}}{h} \\
  0
\end{bmatrix}
&=\frac{1}{2}\bbf_{i+\frac{1}{2},j},\\
\frac{4\bu_{i,j+\frac{1}{2}}-\bu_{i-\frac{1}{2},j+1} - \bu_{i+\frac{1}{2},j+1} -\bu_{i-\frac{1}{2},j}-\bu_{i+\frac{1}{2},j}}{h^2}+
 \begin{bmatrix}
 0\\
  \frac{p_{i,j+1}-p_{i,j}}{h}
\end{bmatrix}
 &=\frac12\bbf_{i, j+\frac{1}{2}},\\
 u_{i+\frac{1}{2},j} - u_{i-\frac{1}{2},j} + v_{i,j+\frac{1}{2}} - v_{i,j-\frac{1}{2}}&=0,\\
\end{split}
\right.
\end{equation}
where
$$
\bbf_{k,\ell}=
\begin{bmatrix}
  f^{(1)}_{k,\ell}\\
  f^{(2)}_{k,\ell}
\end{bmatrix}
:=
\begin{bmatrix}
  f^{(1)}(x_k,y_\ell)\\
  f^{(2)}(x_k,y_\ell)
\end{bmatrix}.
$$
\end{FD-algorithm}
\medskip

By using the component notation for the velocity $\bm{u}=(u,v)^t$, we may rewrite the finite difference equations \eqref{equ.Stokes_FDS} as follows:

\setlength{\arraycolsep}{0.5pt}
\begin{equation}\label{equ.Stokes_FDS_comp}
\left\{
\renewcommand\arraystretch{2}
\begin{array}{rl}
 \displaystyle(4u_{i+\frac{1}{2},j}-u_{i+1,j-\frac{1}{2}} - u_{i+1,j+\frac{1}{2}} - u_{i,j-\frac{1}{2}}- u_{i,j+\frac{1}{2}}) + h(p_{i+1,j}-p_{i,j})&=\displaystyle\frac{h^2}{2}f^{(1)}_{i+\frac{1}{2},j},\\
\displaystyle(4v_{i+\frac{1}{2},j}-v_{i+1,j-\frac{1}{2}} - v_{i+1,j+\frac{1}{2}} -v_{i,j-\frac{1}{2}}-v_{i,j+\frac{1}{2}}) &=\displaystyle\frac{h^2}{2}f^{(2)}_{i+\frac{1}{2},j},\\
 \displaystyle(4u_{i,j+\frac{1}{2}}-u_{i-\frac{1}{2},j+1} - u_{i+\frac{1}{2},j+1} -u_{i-\frac{1}{2},j}-u_{i+\frac{1}{2},j})&=
 \displaystyle\frac{h^2}{2}f^{(1)}_{i,j+\frac{1}{2}},\\
 \displaystyle(4v_{i,j+\frac{1}{2}}-v_{i-\frac{1}{2},j+1} - v_{i+\frac{1}{2},j+1} - v_{i-\frac{1}{2},j}- v_{i+\frac{1}{2},j}) + h(p_{i,j+1}-p_{i,j})&=\displaystyle\frac{h^2}{2}f^{(2)}_{i,j+\frac{1}{2}},\\
 u_{i+\frac{1}{2},j} - u_{i-\frac{1}{2},j} + v_{i,j+\frac{1}{2}} - v_{i,j-\frac{1}{2}}&=0.\\
\end{array}
\right.
\end{equation}

\begin{figure}[!h]
\begin{center}
\subfigure[Stencil for $\bm{u}_{i+\frac{1}{2},j}$]{
\label{Fig.sub.2.ldofswgx}
\tikzset{global scale/.style={
         scale=#1,
         every node/.append style ={scale=#1}
         }
}
\begin{tikzpicture}[global scale =0.66]
\draw[step=3] (0,0) grid (6,3);
\path (0,1.5) coordinate (A11);
\path (3,1.5) coordinate (A12);
\path (6,1.5) coordinate (A13);

\path (1.5,0) coordinate (B11);
\path (4.5,0) coordinate (B12);

\path (1.5,3) coordinate (B21);
\path (4.5,3) coordinate (B22);

    \draw [fill=red] (A12) circle(0.15cm);

    \draw [fill=red] (B11) circle (0.15cm);
    \draw [fill=red] (B12) circle (0.15cm);

    \draw [fill=red] (B21) circle (0.15cm);
    \draw [fill=red] (B22) circle (0.15cm);

    \path (1.5,1.5) coordinate (center1);
    \path (4.5,1.5) coordinate (center2);

    \draw node[above] at (A12) {$\bm{u}_{i+\frac{1}{2},j}$};

    \draw node[below] at (B11) {$i,j-\frac{1}{2}$};
    \draw node[below] at (B12) {$i+1,j-\frac{1}{2}$};
    \draw node[above] at (B21) {$i,j+\frac{1}{2}$};
    \draw node[above] at (B22) {$i+1,j+\frac{1}{2}$};

    \draw node at (center1) {$T_{i,j}$};
    \draw node at (center2) {$T_{i+1,j}$};

\end{tikzpicture}
}
~~~~~~
\subfigure[Stencil for $\bm{u}_{i,j+\frac{1}{2}}$]{
\label{Fig.sub.2.ldofswgy}
\tikzset{global scale/.style={
         scale=#1,
         every node/.append style ={scale=#1}
         }
}
\begin{tikzpicture}[global scale =0.66]
\draw[step=3] (0,0) grid (3,6);

\path (1.5,0) coordinate (A11);
\path (1.5,3) coordinate (A12);
\path (1.5,6) coordinate (A13);

\path (0,1.5) coordinate (B11);
\path (0,4.5) coordinate (B12);

\path (3,1.5) coordinate (B21);
\path (3,4.5) coordinate (B22);

    \draw [fill=red] (A12) circle(0.15cm);

    \draw [fill=red] (B11) circle (0.15cm);
    \draw [fill=red] (B12) circle (0.15cm);

    \draw [fill=red] (B21) circle (0.15cm);
    \draw [fill=red] (B22) circle (0.15cm);

    \path (1.5,1.5) coordinate (center1);
    \path (1.5,4.5) coordinate (center2);

    \draw node[above]  at (A12) {$\bm{u}_{i,j+\frac{1}{2}}$};

    \draw node[left] at (B11)  {$i-\frac{1}{2},j$};
    \draw node[left] at (B12)  {$i-\frac{1}{2},j+1$};
    \draw node[right] at (B21) {$i+\frac{1}{2},j$};
    \draw node[right] at (B22) {$i+\frac{1}{2},j+1$};

    \draw node at (center1) {$T_{i,j}$};
    \draw node at (center2) {$T_{i,j+1}$};

\end{tikzpicture}
}
\end{center}
\caption{\label{Fig.DF.stencil2}Stencils for the velocity field in the finite difference scheme \eqref{equ.Stokes_FDS}.}
\end{figure}

Figure \ref{Fig.DF.stencil2} shows the stencil of  \eqref{equ.Stokes_FDS} for the velocity field as a five-point finite difference scheme with weights $\tilde A=(4,-1,-1,-1,-1)$; the left figure is for the scheme centered at $(x_{i+\frac12,j}, y_j)$ and the right one is for $(x_{i}, y_{j+\frac12})$.

\section{Stability}\label{Section:Stability}
For simplicity, we introduce two bilinear forms:
\begin{eqnarray}
a(\bm{u},\bm{v}) &:=& \kappa S(\bm{u},\bm{v}) + (\nabla_w \bm{u}, \nabla_w \bm{v}),\label{EQ:June30:001}\\
b(\bm{v},w) &:=& (\nabla_w \cdot \bm{v},w),\label{EQ:June30:002}
\end{eqnarray}
where $\bm{u}, \bm{v} \in [W_0(\T_h)]^2$ and $w \in {P}_0 (\T_h)$.
The SWG scheme \eqref{equ.Stokes-FD-SWG} is a typical saddle-point problem which can be analyzed by using the well known theory developed by Babu\u{s}ka \cite{Babuska1973} and Brezzi \cite{Brezzi1974}. The core of the theory is to verify two properties:
\begin{itemize}
\item[(i)] boundedness and a certain coercivity for the bilinear form $a(\cdot,\cdot)$,
\item[(ii)] boundedness and inf-sup condition for the bilinear form $b(\cdot,\cdot)$,
\end{itemize}
which are to be established in the rest of this section.

The space $[W_0(\T_h)]^2$ is a normed linear space equipped with the following triple-bar norm
\begin{eqnarray}
\vertiii{\bm{w}}^2=(\nabla_{w}\bm{w}, \nabla_{w}\bm{w}) + \kappa S(\bm{w},\bm{w})
\end{eqnarray}
for $\bm{w}= \begin{bmatrix} w \\ z \end{bmatrix} \in [W_0(\T_h)]^2$.
To show that $\vertiii{\cdot}$ is indeed a norm in $[W_0(\T_h)]^2$, it suffices to verify the length property for $\vertiii{\cdot}$.
To this end, assume $\vertiii{\bm{w}}=0$ for some $\bm{w}\in [W_0(\T_h)]^2$.
It follows that
\[
\nabla_w \bm{w}=\bm{0},\quad (\bm{w}|_{e_i} - \S(\bm{w})(M_i))|e_i|=\bm{0}
\]
for each edge $e_i\in \E_h$.
From $(\bm{w}|_{e_i} - \S(\bm{w})(M_i))|e_i|=\bm{0}$, we have
\[
\displaystyle \nabla \S(\bm{w})=\frac{1}{|T|}\sum_{i=1}^N \S(\bm{w})(M_i)|e_i| \otimes\bm{n}_i=\frac{1}{|T|}\sum_{i=1}^N \bm{w}|_{e_i}|e_i|\otimes\bm{n}_i=\nabla_w \bm{w}=\bm{0},
\]
where $\bm{x}\otimes\bm{y} = \{x_iy_j\}_{2\times 2}$ for any two vectors $\bm{x}=(x_1,x_2)$ and $\bm{y}=(y_1,y_2)$.
Therefore, $\S(\bm{w})$ has constant value on each element $T$, and so does $\bm{w}$ on the boundary of the element. Moreover, since $\bm{w}=\bm{0}$ on $\partial \Omega$, we then have $\bm{w}\equiv \bm{0}$ on $\Omega$ as $\Omega$ is assumed to be connected.

It is clear that $a(\bm{w},\bm{w}) = \vertiii{\bm{w}}^2$ for any
$\bm{w}\in [W_0(\T_h)]^2$. It follows from the definition of $\vertiii{\cdot}$ and the usual Cauchy-Schwarz inequality that the following boundedness and coercivity hold true for the bilinear form $a(\cdot,\cdot)$.
\begin{lemma}\label{lemma.blinear.a}
For any $\bm{u}, \bm{w}\in [W_0(\T_h)]^2$, we have
\begin{eqnarray}
&&|a(\bm{u},\bm{w})| \leq \vertiii{\bm{u}}\vertiii{\bm{w}},\label{equ.boundedness}\\
&& a(\bm{w},\bm{w}) \geqq \vertiii{\bm{w}}^2.
\end{eqnarray}
\end{lemma}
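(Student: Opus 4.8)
The plan is to read off both assertions almost directly from the definitions, using only the Cauchy--Schwarz inequality; no new machinery is needed.

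First I would observe that the coercivity estimate is in fact an identity. By the definition \eqref{EQ:June30:001} of $a(\cdot,\cdot)$ and the definition of $\vertiii{\cdot}$, one has $a(\bm{w},\bm{w}) = \kappa S(\bm{w},\bm{w}) + (\nabla_w\bm{w},\nabla_w\bm{w}) = \vertiii{\bm{w}}^2$ for every $\bm{w}\in[W_0(\T_h)]^2$, which gives $a(\bm{w},\bm{w})\geqq\vertiii{\bm{w}}^2$ with equality, hence a fortiori the stated inequality.

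For the boundedness estimate the key point is that each of the two pieces of $a(\cdot,\cdot)$ is a symmetric positive semidefinite bilinear form on $[W_0(\T_h)]^2$: the term $(\nabla_w\bm{u},\nabla_w\bm{w})$ is the $L^2(\Omega)$ inner product of the (piecewise constant) weak gradients, and $S(\bm{u},\bm{w})=\sum_T S_T(\bm{u},\bm{w})$ is, by the definition \eqref{Def-ST-poly} of $S_T$, a sum over edges of $h^{-1}|e_i|$ times products of the quantities $(\S(\bm{u})-\bm{u})(M_i)$, hence a scaled $\ell^2$-type inner product. Consequently Cauchy--Schwarz applies to each separately, giving
\[
|S(\bm{u},\bm{w})|\le S(\bm{u},\bm{u})^{1/2}S(\bm{w},\bm{w})^{1/2},\qquad
|(\nabla_w\bm{u},\nabla_w\bm{w})|\le (\nabla_w\bm{u},\nabla_w\bm{u})^{1/2}(\nabla_w\bm{w},\nabla_w\bm{w})^{1/2}.
\]
I would then combine these with the elementary inequality $a_1b_1+a_2b_2\le(a_1^2+a_2^2)^{1/2}(b_1^2+b_2^2)^{1/2}$, applied with $a_1=\kappa^{1/2}S(\bm{u},\bm{u})^{1/2}$, $a_2=(\nabla_w\bm{u},\nabla_w\bm{u})^{1/2}$ and the analogous choices for $\bm{w}$; the right-hand side is precisely $\vertiii{\bm{u}}\vertiii{\bm{w}}$, which yields \eqref{equ.boundedness}.

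There is essentially no obstacle here. The only point requiring a moment's care is that $S(\cdot,\cdot)$ may be degenerate, since it controls only the discrepancies $\S(\bm{u})-\bm{u}$ and not $\bm{u}$ itself; but the Cauchy--Schwarz inequality is valid for any symmetric positive semidefinite form, so this causes no trouble. The substantive work --- namely that $\vertiii{\cdot}$ is genuinely a norm on $[W_0(\T_h)]^2$ --- has already been dispatched in the paragraph immediately preceding the lemma.
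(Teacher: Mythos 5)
Your proof is correct and follows essentially the same route as the paper, which likewise observes that $a(\bm{w},\bm{w})=\vertiii{\bm{w}}^2$ by definition and obtains boundedness from the Cauchy--Schwarz inequality applied to the two positive semidefinite constituents of $a(\cdot,\cdot)$. Your write-up merely makes explicit the two-term discrete Cauchy--Schwarz step that the paper leaves implicit.
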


For the bilinear form $b(\cdot,\cdot)$, we have the following result on the {\em inf-sup} condition.
\begin{lemma}\label{lemma.blinear.b}
There exists a positive constant $\beta$ independent of $h$ such that
\begin{eqnarray}\label{equ.infsup.infsup}
\sup_{\bm{v}\in W_0(\T_h)}\frac{b(\bm{v},w)}{\vertiii{\bm{v}}} \geq \beta \|w\|,
\end{eqnarray}
for all $w \in P_0(\T_h)$.
\end{lemma}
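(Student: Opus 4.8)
The plan is to reduce the SWG inf-sup condition to the classical continuous inf-sup condition for the Stokes problem. Given $w \in P_0(\T_h)$ with $\|w\| = 1$, the continuous inf-sup condition (the surjectivity of the divergence from $[H_0^1(\Omega)]^2$ onto $L_0^2(\Omega)$) supplies a vector field $\bm{v}^* \in [H_0^1(\Omega)]^2$ with $\Div \bm{v}^* = w$ and $\|\bm{v}^*\|_1 \leq C \|w\|$. The goal is then to manufacture a discrete test function $\bm{v}_b \in [W_0(\T_h)]^2$ whose weak divergence reproduces $w$ and whose triple-bar norm is controlled by $\|\bm{v}^*\|_1$. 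The natural candidate is to define $\bm{v}_b$ edge by edge through the $L^2$ projection of the normal trace: on each interior edge $e_i$ set $\bm{v}_b|_{e_i} \cdot \bm{n}_i := \frac{1}{|e_i|}\int_{e_i} \bm{v}^* \cdot \bm{n}_i\, ds$, and use the full vector average componentwise. By \eqref{equ.weak.divergence}, for any $T$ we get $(\nabla_w \cdot \bm{v}_b, 1)_T = \langle \bm{v}_b \cdot \bm{n}, 1\rangle_{\partial T} = \langle \bm{v}^* \cdot \bm{n}, 1\rangle_{\partial T} = \int_T \Div \bm{v}^*\, dT = \int_T w\, dT$, and since $\nabla_w \cdot \bm{v}_b$ is constant on $T$, this forces $\nabla_w \cdot \bm{v}_b|_T = w|_T$. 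Hence $b(\bm{v}_b, w) = (\nabla_w \cdot \bm{v}_b, w) = \|w\|^2 = 1$.

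The remaining task is the norm bound $\vertiii{\bm{v}_b} \leq C \|\bm{v}^*\|_1$, which splits into estimating $\|\nabla_w \bm{v}_b\|$ and $\kappa S(\bm{v}_b, \bm{v}_b)$. For the weak-gradient part one uses \eqref{DefWGpoly}: $\nabla_w v_b|_T = \frac{1}{|T|}\sum_i v_{b,i}|e_i|\bm{n}_i$; subtracting the constant cell-average of $\bm{v}^*$ (which contributes zero since $\sum_i |e_i|\bm{n}_i = 0$), writing each $v_{b,i}$ as an edge average of $\bm{v}^*$, and applying a scaled trace inequality $\|\bm{v}^* - \bar{\bm{v}}^*_T\|_{L^2(\partial T)}^2 \leq C(h^{-1}\|\bm{v}^* - \bar{\bm{v}}^*_T\|_{L^2(T)}^2 + h\|\nabla \bm{v}^*\|_{L^2(T)}^2)$ together with Poincaré on $T$ gives $\|\nabla_w v_b\|_T \leq C \|\nabla \bm{v}^*\|_{L^2(T)}$, hence after summation $\|\nabla_w \bm{v}_b\| \leq C\|\bm{v}^*\|_1$. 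For the stabilizer term, by \eqref{Def-ST-poly} one needs $h^{-1}\|Q_b \S(v_b) - v_b\|_{\partial T}^2$; since $\S(v_b) \in P_1(T)$ is the least-squares fit of the edge data and the edge data are moments of $\bm{v}^*$, a standard approximation argument (comparing with the $L^2$-projection of $\bm{v}^*$ into $P_1(T)$, using \eqref{EQ:S-Property:0212} and scaled trace/inverse inequalities) yields $h^{-1}\|Q_b\S(v_b) - v_b\|_{\partial T}^2 \leq C \|\nabla \bm{v}^*\|_{L^2(T)}^2$. Combining, $\vertiii{\bm{v}_b}^2 \leq C(1+\kappa)\|\bm{v}^*\|_1^2 \leq C\|w\|^2$, so $\frac{b(\bm{v}_b,w)}{\vertiii{\bm{v}_b}} \geq \frac{1}{C\|w\|} = \beta\|w\|$ with $\beta = 1/C$ independent of $h$.

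The main obstacle is the stabilizer estimate: unlike the original weak Galerkin, here there is no interior degree of freedom, and $\S(v_b)$ is only a least-squares (not interpolatory) extension of the boundary data, so one must carefully quantify how far $\S(v_b)$ sits from $v_b$ on $\partial T$ in terms of the smoothness of the underlying $\bm{v}^*$. The key is that the edge values $v_{b,i}$ agree with the edge averages of a genuinely $H^1$ function, so $\S(v_b)$ is close to the $P_1$ best approximation of $\bm{v}^*$ on $T$; a Bramble–Hilbert / scaling argument on each polygonal element (using shape-regularity of $\T_h$ to control the constants in $(M^t E M)^{-1}$ uniformly) then delivers the bound. Care is also needed if $\bm{v}^*$ is only in $[H^1(\Omega)]^2$ and not smoother, in which case one may first regularize $w$ or invoke the Fortin-operator viewpoint; the construction above is exactly a Fortin interpolation operator $\Pi_h : [H_0^1(\Omega)]^2 \to [W_0(\T_h)]^2$ satisfying $\nabla_w \cdot (\Pi_h \bm{v}^*) = Q_0(\Div \bm{v}^*)$ and $\vertiii{\Pi_h \bm{v}^*} \leq C\|\bm{v}^*\|_1$, and the existence of such a Fortin operator is equivalent to the desired inf-sup bound.
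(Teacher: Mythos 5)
Your proposal is correct and follows essentially the same route as the paper: both arguments take the $H^1_0$ field supplied by the continuous inf-sup condition, project it onto the edge space via edge averages ($Q_b$), observe that the weak divergence then reproduces $w$ so that $b(\bm{v},w)\gtrsim \|w\|^2$, and bound $\vertiii{\bm{v}}$ by $\|\bm{v}^*\|_1$ through trace/approximation estimates for the weak gradient and the stabilizer. The only cosmetic difference is that the paper obtains the weak-gradient bound from the clean identity $\nabla_w Q_b\tilde{\bm{v}}=\mathbb{Q}_h(\nabla\tilde{\bm{v}})$ rather than your subtract-the-cell-average argument, and it handles the stabilizer by exactly the least-squares comparison with $Q_0\tilde{\bm{v}}$ that you sketch.
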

\begin{proof}
For any given $w\in P_0(\T_h) \subset L_0^2(\Omega)$, it is well known \cite{Brenner2007,Brezzi1991,Girault1986,Gunzburger1989} that there exists a vector-valued function $\tilde{\bm{v}}\in[H_0^1(\Omega)]^2$ such that
\begin{equation}
\label{infsup.finiteelement}
(\nabla \cdot \tilde{\bm{v}},w) \geq C_1\| w\|^2,\quad \|\tilde{\bm{v}}\|_1 \leq C_2 \|w\|.
\end{equation}
where $C_i>0$ are two constants depending only on the domain $\Omega$.
Set $\bm{v} = Q_b \tilde{\bm{v}} \in [W_0(\T_h)]^2$ where $Q_b$ is the $L^2$ projection operator from $[H_0^1(\Omega)]^2$ to $[W_0(\T_h)]^2$.
We define also $Q_0$ and $\mathbb{Q}_h$ the $L^2$ projection operators onto $[P_1(\T_h)]^2$ and $[P_0(\T_h)]^{2\times 2}$ respectively.
Then, for all $q \in [P_0(\T_h)]^{2\times 2}$, we have
\begin{eqnarray}
(\nabla_w Q_b \tilde{\bm{v}},q)_T
&=&\langle Q_b \tilde{\bm{v}},q \cdot \bn \rangle_{\partial T} ~~~~~~~~~~~~~~~~~~~~~~~~~~~~~~~~~~~~~\label{equ.projection.nabla}\\
&=&\langle \tilde{\bm{v}},q \cdot \bn \rangle_{\partial T} \nonumber\\
&=& (\nabla \tilde{\bm{v}},q)_T \nonumber\\
&=& (\mathbb{Q}_h (\nabla\tilde{\bm{v}}),q)_T \nonumber,
\end{eqnarray}
which implies $\nabla_w Q_b \tilde{\bm{v}} = \mathbb{Q}_h (\nabla\tilde{\bm{v}})$.
Thus, the first part of the norm $\vertiii{\cdot}$ verifies:
\begin{eqnarray}\label{equ.norm.1}
  \sum_{T\in\T_h}\|\nabla_w \bm{v} \|_T^2
= \sum_{T\in\T_h} \|\nabla_w Q_b \tilde{\bm{v}} \|_T^2
= \sum_{T\in\T_h}\| \mathbb{Q}_h\nabla \tilde{\bm{v}} \|_T^2\leq C \|\tilde{\bm{v}}\|^2_1.
\end{eqnarray}

Next, we have
\begin{eqnarray*}
\|Q_b(\S(\bm{v})-Q_b \bm{v})\|^2_{\partial T}
&=&\langle Q_b(\S(Q_b \tilde{\bm{v}})) - Q_b \tilde{\bm{v}}, Q_b(\S(Q_b \tilde{\bm{v}})) - Q_b \tilde{\bm{v}}\rangle_{\partial T}\\
&=&\langle Q_b(\S(Q_b \tilde{\bm{v}})) - Q_b \tilde{\bm{v}}, Q_b(Q_0 \tilde{\bm{v}}) - Q_b \tilde{\bm{v}}\rangle_{\partial T} \nonumber\\
&\leq & \|Q_b(\S(Q_b \tilde{\bm{v}})) - Q_b \tilde{\bm{v}}\|_{\partial T} \|Q_b(Q_0 \tilde{\bm{v}}) - Q_b \tilde{\bm{v}}\|_{\partial T}, \nonumber
\end{eqnarray*}
which gives
\begin{eqnarray*}
\|Q_b(\S(\bm{v})-Q_b \bm{v})\|^2_{\partial T}
&\leq& \|Q_b(Q_0 \tilde{\bm{v}}) - Q_b \tilde{\bm{v}}\|^2_{\partial T} ~~~~~~~~~~~~~~~~~~~~~~~~~~~~~~~~\\
&\leq& \|Q_0 \tilde{\bm{v}} - \tilde{\bm{v}}\|^2_{\partial T}\nonumber\\
&\leq&  C_1 (h^{-1}\|Q_0 \tilde{\bm{v}} - \tilde{\bm{v}}\|^2_{T} + h\|\nabla (Q_0\tilde{\bm{v}}-\tilde{\bm{v}})\|^2_{T})\nonumber\\
&\leq&  C_2 h \|\tilde{\bm{v}}\|_1^2, \nonumber
\end{eqnarray*}
thus
\begin{eqnarray}\label{equ.norm.2}
\kappa {S}(\bm{v},\bm{v})=\kappa h^{-1}\sum_{T\in\T_h}\|Q_b(\S(\bm{v})-Q_b \bm{v})\|^2_{\partial T} \leq C_3 \|\tilde{\bm{v}}\|_1.~~~~~~~~~~~~~~~~~~~~
\end{eqnarray}
Combining the two equations \eqref{equ.norm.1} and \eqref{equ.norm.2} we arrive at
\begin{eqnarray}\label{equ.infsup.norm}
 \vertiii{\bm{v}} \leq C_0 \|\tilde{\bm{v}}\|_1,~~~~~~~~~~~~~~~~~~~~~~~~~~~~~~~
\end{eqnarray}
for some constant $C_0$.

On the other side, from \eqref{infsup.finiteelement} we have
\begin{eqnarray}\label{equ.infsup.bilinear}
   b(\bm{v},w)
  = (\nabla_w \cdot\bm{v},w)
& = & (\nabla_w \cdot Q_b\tilde{\bm{v}},w)\\
& = &  \langle Q_b\tilde{\bm{v}}\cdot \bm{n},w \rangle \nonumber\\
& = &  \langle \tilde{\bm{v}}\cdot \bm{n},w \rangle \nonumber\\
& = & (\nabla \cdot \tilde{\bm{v}},w ) \nonumber\\
& \geq & C_1\|w\|^2.\nonumber
\end{eqnarray}
Using the above equation, \eqref{equ.infsup.norm}, and \eqref{infsup.finiteelement}, we obtain
\begin{eqnarray}
\frac{ |b(\bm{v},w)|}{\vertiii{\bm{v}} } \geq \frac{C_1\|w\|^2}{C_0 \|\tilde{\bm{v}}\|_1}\geq \beta \|w\|,~~~~~~~~~~~~
\end{eqnarray}
for a positive constant $\beta$. This completes the proof of the lemma.
\end{proof}

\medskip
It follows from Lemma \ref{lemma.blinear.a} and Lemma \ref{lemma.blinear.b} that the following solvability holds true for the simplified weak Galerkin algorithm \eqref{equ.Stokes-FD-SWG}. Readers are referred to \cite{WangYe_2013,wy3655,wy-stokes} for a detailed discussion on the original weak Galerkin finite element method.
\begin{theorem}
The numerical scheme \eqref{equ.Stokes-FD-SWG} has one and only one solution for any positive stabilization parameter $\kappa>0$.
\end{theorem}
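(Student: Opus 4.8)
The plan is to appeal to the Babu\v{s}ka--Brezzi saddle-point theory, whose hypotheses have already been assembled in Lemma~\ref{lemma.blinear.a} and Lemma~\ref{lemma.blinear.b}. First I would recall that the scheme \eqref{equ.Stokes-FD-SWG} is, with the notation \eqref{EQ:June30:001}--\eqref{EQ:June30:002}, precisely the discrete saddle-point problem of finding $\bm{u}_b\in[W_0(\T_h)]^2$ and $p_h\in P_0(\T_h)$ with $a(\bm{u}_b,\bm{v}_b)-b(\bm{v}_b,p_h)=(\bm{f},\S(\bm{v}_b))$ for all $\bm{v}_b$ and $b(\bm{u}_b,w)=0$ for all $w$. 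Since everything lives in finite-dimensional spaces, existence is equivalent to uniqueness, so it suffices to show the homogeneous problem (with $\bm{f}=0$) has only the zero solution.

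Next I would carry out the standard energy argument: take $\bm{v}_b=\bm{u}_b$ and $w=p_h$ in the homogeneous equations. The second equation gives $b(\bm{u}_b,p_h)=0$, so the first equation collapses to $a(\bm{u}_b,\bm{u}_b)=0$. By the coercivity half of Lemma~\ref{lemma.blinear.a}, $a(\bm{u}_b,\bm{u}_b)\geq\vertiii{\bm{u}_b}^2$, hence $\vertiii{\bm{u}_b}=0$. Because $\vertiii{\cdot}$ has already been verified to be a genuine norm on $[W_0(\T_h)]^2$ (the length property established just before Lemma~\ref{lemma.blinear.a}), we conclude $\bm{u}_b=\bm{0}$. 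It then remains to recover $p_h=0$: the first homogeneous equation reduces to $b(\bm{v}_b,p_h)=0$ for all $\bm{v}_b\in[W_0(\T_h)]^2$, i.e. $\sup_{\bm{v}_b}b(\bm{v}_b,p_h)/\vertiii{\bm{v}_b}=0$, and the inf-sup inequality \eqref{equ.infsup.infsup} of Lemma~\ref{lemma.blinear.b} forces $\beta\|p_h\|\leq 0$, whence $p_h=0$.

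For existence I would either invoke the abstract Brezzi theorem directly (boundedness and coercivity of $a$ on the whole space, boundedness and inf-sup for $b$, all supplied by the two lemmas) or, equivalently in this finite-dimensional setting, note that the $2N{+}1$-by-$(2N{+}1)$ global linear system in \eqref{EQ:ESM:01} has a trivial kernel by the uniqueness argument above, hence is invertible, which yields existence for every right-hand side and in particular for every $\kappa>0$. I would also remark that the zero-mean normalization $\int_\Omega p_h\,d\Omega=0$ is what makes $p_h$ range over $P_0(\T_h)\cap L_0^2(\Omega)$, the space on which the inf-sup condition was proved.

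There is essentially no obstacle here: the theorem is a routine corollary of the two preceding lemmas, and the only point requiring a word of care is that coercivity of $a$ is full (not merely coercivity on the kernel of $b$), so the argument does not even need to restrict to divergence-free test functions; the positivity of $\kappa$ enters only through the definition of $\vertiii{\cdot}$ and is already baked into Lemma~\ref{lemma.blinear.a}. Thus the hardest part was done earlier --- in proving the inf-sup condition via the continuous Stokes inf-sup and the commuting property $\nabla_w Q_b\tilde{\bm{v}}=\mathbb{Q}_h\nabla\tilde{\bm{v}}$ --- and the present statement merely packages those facts.
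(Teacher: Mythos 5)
Your proposal is correct and follows essentially the same route as the paper, which simply invokes Lemma \ref{lemma.blinear.a} and Lemma \ref{lemma.blinear.b} together with the Babu\v{s}ka--Brezzi saddle-point framework (deferring details to the cited references); your write-up merely makes explicit the standard finite-dimensional uniqueness-implies-existence argument and the role of the zero-mean normalization of the pressure. Nothing is missing.
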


\medskip

Due to the connection between the finite difference scheme \eqref{equ.Stokes_FD-NEW} and the SWG finite element method, we have the following result for the solvability of the finite difference method.
\begin{theorem}\label{DF-uniq}
For any given stabilization parameter $\kappa>0$, the finite difference scheme \eqref{equ.Stokes_FD-NEW} has one and only one solution. The same conclusion holds true for the five-point finite difference scheme \eqref{equ.Stokes_FDS}.
\end{theorem}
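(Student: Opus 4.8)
The plan is to obtain Theorem~\ref{DF-uniq} as an immediate consequence of the algebraic equivalence in Theorem~\ref{DF-equi-SWG-new} together with the unique solvability of the SWG scheme \eqref{equ.Stokes-FD-SWG} that was established just above.

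\emph{Step 1: turn the equivalence into an identification of linear systems.} First I would spell out that Theorem~\ref{DF-equi-SWG-new} furnishes a bijection between the unknowns of the two formulations — the edge-midpoint velocities $\bm{u}_b\in[W_0(\T_h)]^2$ with the red-point values $\bm{u}_{k\ell}$ (the homogeneous Dirichlet condition matching on both sides), and the piecewise constant pressure $p_h\in P_0(\T_h)$ with the cell-center values $p_{ij}$ — under which the coefficient matrix of \eqref{equ.Stokes_FD-NEW} coincides with that of \eqref{equ.Stokes-FD-SWG} restricted to a uniform square partition. Indeed, the FD momentum equation at an interior edge is by construction the sum of the two element equations \eqref{ElementStencilstokes} sharing that edge, and the FD divergence equation at a cell is the element divergence equation there; and since the SWG trial and test spaces agree, both are square systems of the same size.

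\emph{Step 2: transfer nonsingularity.} Next I would invoke the theorem above: \eqref{equ.Stokes-FD-SWG} has one and only one solution for every $\kappa>0$, i.e.\ its coefficient matrix is nonsingular. By Step~1 this is, up to the fixed reindexing, the coefficient matrix of \eqref{equ.Stokes_FD-NEW}, which is therefore also nonsingular; hence \eqref{equ.Stokes_FD-NEW} has a unique solution for any right-hand side, in particular for $\tfrac{h^2}{2}\bbf$. Specializing to $\kappa=4$ collapses \eqref{equ.Stokes_FD-NEW} to the five-point scheme \eqref{equ.Stokes_FDS}, which gives the final assertion.

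The step that needs a little care — the closest thing to an obstacle here — is bookkeeping rather than analysis: the right-hand side of \eqref{equ.Stokes_FD-NEW} is the quadrature value $\tfrac{h^2}{2}\bbf$, not the exact SWG load $(\bbf,\S(\cdot))$ used in \eqref{equ.Stokes-FD-SWG}, so I would emphasize that solvability and uniqueness depend only on the coefficient matrix, which is unaffected by this change of data. I would also note, consistently with the saddle-point framework of Section~\ref{Section:Stability}, that the pressure unknown is understood in the mean-zero subspace (equivalently $\sum_{i,j}p_{ij}\,|T_{ij}|=0$), so that ``one and only one solution'' for \eqref{equ.Stokes_FD-NEW} carries exactly the meaning it has for \eqref{equ.Stokes-FD-SWG}.
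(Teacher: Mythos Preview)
Your proposal is correct and matches the paper's approach: the paper simply states that the result follows from ``the connection between the finite difference scheme \eqref{equ.Stokes_FD-NEW} and the SWG finite element method,'' i.e., from Theorem~\ref{DF-equi-SWG-new} together with the solvability theorem for \eqref{equ.Stokes-FD-SWG}, without spelling out further details. Your Step~1/Step~2 decomposition and the remarks on the right-hand side quadrature and the mean-zero pressure constraint are more explicit than anything the paper provides, but the underlying argument is the same.
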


\section{Error Estimates}\label{sectionEEStokes}
Let $\bm{u}$ and $p$ be the exact solution of the model problem \eqref{Equ.Stokes}, and denote by $\bm{u}_b=
\begin{bmatrix}
u_{b} \\
v_{b} \\
\end{bmatrix} \in [W_0(\T_h)]^2$ and $p_h \in P_0(\T_h)$ the numerical approximation arising from the SWG scheme \eqref{equ.Stokes-FD-SWG}.
Let $Q_b\bm{u}$ and $Q_h p$ be the $L^2$ projection of $\bm{u}$ and $p$ in the spaces $[W_0(\T_h)]^2$ and $P_0(\T_h)$, respectively. By the error function we mean the difference between the $L^2$ projection and the SWG approximations:
\begin{equation}\label{equ.error}
 \bm{e}=Q_b \bm{u}-\bm{u}_b,\; \eta=Q_h p-p_h.
\end{equation}

We now derive two equations for which the error functions $\bm{e}$ and $\eta$ must satisfy. The resulting equations are called the \textit{error equations}, which play an important role in the convergence analysis of the SWG scheme.

\begin{lemma}
Let $(\bm{u};p) \in [H^1(\Omega)]^2\times L^2(\Omega)$ be sufficiently smooth and satisfy the following equation
\begin{equation}\label{Equ.Stokes.1}
-\Delta \bm{u}+ \nabla p =\bm{f}  \text{ in } \Omega.
\end{equation}
The following equation holds true
\begin{eqnarray}\label{equ.lemma.wf}
& & \sum_{T}(\nabla_w Q_b \bm{u},\nabla_w \bm{v})_T - \sum_{T}(\nabla_w\cdot\bm{v}, Q_h p )_{T}\\
&= &(\bm{f}, \S(\bm{v})) + \sum_{T}\langle \frac{\partial \bm{u}}{\partial \bm{n}}- \mathbb{Q}_h(\nabla \bm{u})\cdot\bm{n},\S(\bm{v})-\bm{v}\rangle_{\partial T}\nonumber\\
&& - \sum_{T} \langle (\S(\bm{v})-\bm{v})\cdot \bm{n},p - Q_h p\rangle_{\partial T}, \nonumber
\end{eqnarray}
for all $\bm{v} \in [W_0(\T_h)]^2$, where $\mathbb{Q}_h$ is the $L^2$ projection operator onto $[P_0(\T_h)]^2$.
\end{lemma}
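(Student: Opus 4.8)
The plan is to test the first equation of the Stokes system \eqref{Equ.Stokes.1} against the linear extension $\S(\bm{v})$ of an arbitrary $\bm{v}\in[W_0(\T_h)]^2$, integrate by parts element by element, and then recognize each resulting term as a weak-differential quantity by invoking the definitions \eqref{DefWGpoly}, \eqref{equ.weak.divergence} of the weak gradient and weak divergence together with the projection identity $\nabla_w Q_b\tilde{\bm v}=\mathbb Q_h(\nabla\tilde{\bm v})$ already established in \eqref{equ.projection.nabla}. Concretely, starting from $(\bm f,\S(\bm v))=\sum_T(-\Delta\bm u+\nabla p,\S(\bm v))_T$, integration by parts on each $T$ gives
\begin{equation*}
\sum_T(\nabla\bm u,\nabla\S(\bm v))_T-\sum_T\langle\tfrac{\partial\bm u}{\partial\bm n},\S(\bm v)\rangle_{\partial T}-\sum_T(p,\nabla\cdot\S(\bm v))_T+\sum_T\langle\S(\bm v)\cdot\bm n,p\rangle_{\partial T}.
\end{equation*}

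The second step is to convert the volume integrals $(\nabla\bm u,\nabla\S(\bm v))_T$ and $(p,\nabla\cdot\S(\bm v))_T$ into weak-derivative pairings. Since $\S(\bm v)\in[P_1(T)]^2$, its classical gradient and divergence are constant on $T$, so $(\nabla\bm u,\nabla\S(\bm v))_T=(\mathbb Q_h(\nabla\bm u),\nabla\S(\bm v))_T=\langle\mathbb Q_h(\nabla\bm u)\cdot\bm n,\S(\bm v)\rangle_{\partial T}$ by applying the divergence theorem backwards (the constancy of $\nabla\S(\bm v)$ lets us write the volume term as a boundary term), and similarly $(p,\nabla\cdot\S(\bm v))_T=(Q_hp,\nabla\cdot\S(\bm v))_T=\langle\S(\bm v)\cdot\bm n,Q_hp\rangle_{\partial T}$. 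Then, using the defining property \eqref{Def.extension} of $\S$ — which says exactly that $\S(\bm v)$ and $\bm v$ have the same edgewise $L^2$-pairing against $P_1$ functions, hence against constants on each edge — one replaces $\S(\bm v)$ by $\bm v$ in all boundary integrals whose other factor is edgewise constant, namely those involving $\mathbb Q_h(\nabla\bm u)\cdot\bm n$ (piecewise constant on $\partial T$) and $Q_hp$ (constant on $T$). This produces $\sum_T\langle\mathbb Q_h(\nabla\bm u)\cdot\bm n,\bm v\rangle_{\partial T}=\sum_T(\nabla_w Q_b\bm u,\nabla_w\bm v)_T$ via \eqref{equ.projection.nabla}, and $\sum_T\langle\bm v\cdot\bm n,Q_hp\rangle_{\partial T}=\sum_T(\nabla_w\cdot\bm v,Q_hp)_T$ via \eqref{equ.weak.divergence}.

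Collecting terms, $(\bm f,\S(\bm v))$ equals $\sum_T(\nabla_w Q_b\bm u,\nabla_w\bm v)_T-\sum_T(\nabla_w\cdot\bm v,Q_hp)_T$ plus the two correction terms $-\sum_T\langle\mathbb Q_h(\nabla\bm u)\cdot\bm n,\S(\bm v)-\bm v\rangle_{\partial T}+\sum_T\langle\tfrac{\partial\bm u}{\partial\bm n},\S(\bm v)\rangle_{\partial T}$, reorganized — using the single-valuedness of $\bm v$ and of $\bm u$ across interior edges, so that $\sum_T\langle\tfrac{\partial\bm u}{\partial\bm n},\bm v\rangle_{\partial T}=0$ — into the form stated in \eqref{equ.lemma.wf}. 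Rearranging yields the claimed identity.

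The main obstacle will be the careful bookkeeping in the second step: one must check that each boundary term that gets its $\S(\bm v)$ replaced by $\bm v$ genuinely has an edgewise-constant (for $\mathbb Q_h(\nabla\bm u)\cdot\bm n$) or cellwise-constant (for $Q_hp$) partner so that \eqref{Def.extension} applies, while the terms paired with $\tfrac{\partial\bm u}{\partial\bm n}$ and with $p$ (not projected) must \emph{not} be simplified that way — this is precisely why $\tfrac{\partial\bm u}{\partial\bm n}-\mathbb Q_h(\nabla\bm u)\cdot\bm n$ and $p-Q_hp$ appear as the factors in the residual terms. One also needs the elementary but essential observation that $\nabla\cdot\S(\bm v)$ is constant on each $T$ (so $(p,\nabla\cdot\S(\bm v))_T=(Q_hp,\nabla\cdot\S(\bm v))_T$), and that on interior edges the contributions of the exact-solution boundary data from the two adjacent elements cancel because $\bm u$ and its normal flux are continuous. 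No estimates are needed here — it is a pure identity — so once the algebra of boundary terms is organized correctly the proof is complete.
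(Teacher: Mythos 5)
Your overall route --- test $-\Delta\bm{u}+\nabla p=\bm{f}$ against $\S(\bm{v})$, integrate by parts on each element, and use the constancy of $\nabla\S(\bm{v})$ and $\nabla\cdot\S(\bm{v})$ to write $(\nabla\bm{u},\nabla\S(\bm{v}))_T=\langle\mathbb{Q}_h(\nabla\bm{u})\cdot\bm{n},\S(\bm{v})\rangle_{\pT}$ and $(p,\nabla\cdot\S(\bm{v}))_T=\langle\S(\bm{v})\cdot\bm{n},Q_hp\rangle_{\pT}$ --- is exactly the paper's computation run in the opposite direction, and that part is fine. The genuine problem is the mechanism you give for the key step of trading $\S(\bm{v})$ for $\bm{v}$. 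You claim that \eqref{Def.extension} lets you replace $\S(\bm{v})$ by $\bm{v}$ in any boundary pairing whose other factor is edgewise constant. That is not what \eqref{Def.extension} says: it is a discrete orthogonality of the weighted midpoint defects $|e_i|(\S(v_b)(M_i)-v_{b,i})$ against the midpoint values $\phi(M_i)$ of a \emph{single linear function} $\phi\in P_1(T)$. The edgewise-constant data you want to kill, $\mathbb{Q}_h(\nabla\bm{u})\cdot\bm{n}_i$ and $Q_hp\,\bm{n}_i$, are not midpoint traces of functions in $P_1(T)$ because the normal direction jumps from edge to edge on a polygon. A direct computation (midpoint rule is exact for the linear function $\S(\bm{v})$ on each edge) gives
\begin{equation*}
\langle(\S(\bm{v})-\bm{v})\cdot\bm{n},Q_hp\rangle_{\pT}=|T|\,Q_hp\,\bigl(\nabla\cdot\S(\bm{v})-\nabla_w\cdot\bm{v}\bigr),\qquad
\langle\mathbb{Q}_h(\nabla\bm{u})\cdot\bm{n},\S(\bm{v})-\bm{v}\rangle_{\pT}=|T|\,\mathbb{Q}_h(\nabla\bm{u}):\bigl(\nabla\S(\bm{v})-\nabla_w\bm{v}\bigr),
\end{equation*}
and $\nabla\S(\bm{v})\neq\nabla_w\bm{v}$ on general polygons (the paper only proves the inequality \eqref{EQ:S-Property:0212}; equality is special to rectangles). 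So these pairings do not vanish, and indeed they must not: they are precisely the $-\mathbb{Q}_h(\nabla\bm{u})\cdot\bm{n}$ part of the first residual term and the $+Q_hp$ part of the second residual term in \eqref{equ.lemma.wf}. If you carried out your replacement literally you would derive a different identity (with residuals $\langle\partial\bm{u}/\partial\bm{n},\S(\bm{v})-\bm{v}\rangle_{\pT}$ and $\langle(\S(\bm{v})-\bm{v})\cdot\bm{n},p\rangle_{\pT}$), which is false on general polygonal meshes.

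The correct mechanism, which is what the paper uses, involves no orthogonality of $\S$ at all: simply write $\S(\bm{v})=\bm{v}+(\S(\bm{v})-\bm{v})$ in every boundary term, keep all four differences as residuals (they combine into the two stated correction terms), and observe that the two pure-$\bm{v}$ sums $\sum_T\langle\partial\bm{u}/\partial\bm{n},\bm{v}\rangle_{\pT}$ and $\sum_T\langle\bm{v}\cdot\bm{n},p\rangle_{\pT}$ vanish by telescoping over interior edges (single-valuedness of $\bm{v}$, continuity of $\nabla\bm{u}\cdot\bm{n}$ and of $p$ for smooth exact solutions, opposite normals, and $\bm{v}=0$ on $\Gamma$), while $\sum_T\langle\mathbb{Q}_h(\nabla\bm{u})\cdot\bm{n},\bm{v}\rangle_{\pT}$ and $\sum_T\langle\bm{v}\cdot\bm{n},Q_hp\rangle_{\pT}$ are by definition $\sum_T(\nabla_wQ_b\bm{u},\nabla_w\bm{v})_T$ and $\sum_T(\nabla_w\cdot\bm{v},Q_hp)_T$. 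You invoke the interior-edge cancellation only for $\partial\bm{u}/\partial\bm{n}$; you also need it for $p$, and you need to retain the two correction terms you are currently discarding as zero.
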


\begin{proof}
From equation \eqref{equ.projection.nabla}, we have
\begin{eqnarray}
&&\sum_{T}(\nabla_w Q_b \bm{u},\nabla_w \bm{v})_T\nonumber\\
&=& \sum_{T}(\mathbb{Q}_h(\nabla \bm{u}),\nabla_w \bm{v})_T \label{equ.wg.wf.1}\\
&=& \sum_{T} \langle  \mathbb{Q}_h(\nabla \bm{u})\cdot\bm{n},\bm{v}\rangle_{\partial T} \nonumber\\
&=& \sum_{T} \langle  \mathbb{Q}_h(\nabla \bm{u})\cdot\bm{n},\bm{v}\rangle_{\partial T}-\langle \mathbb{Q}_h(\nabla \bm{u})\cdot\bm{n},\S(\bm{v})\rangle_{\partial T} + \langle \mathbb{Q}_h(\nabla \bm{u})\cdot\bm{n},\S(\bm{v})\rangle_{\partial T} \nonumber\\
&=& \sum_{T} \langle \mathbb{Q}_h(\nabla \bm{u})\cdot\bm{n},\bm{v}-\S(\bm{v})\rangle_{\partial T} +(\nabla \bm{u},\nabla \S(\bm{v}))_{T} \nonumber\\
&=& \sum_{T} \langle \mathbb{Q}_h(\nabla \bm{u})\cdot\bm{n},\bm{v}-\S(\bm{v})\rangle_{\partial T} +(-\nabla \cdot(\nabla \bm{u}) ,\S(\bm{v}))_{T} + \langle \frac{\partial \bm{u}}{\partial \bm{n}},\S(\bm{v})\rangle_{\partial T}\nonumber\\
&=& (-\Delta\bm{u} ,\S( \bm{v}))+ \sum_{T}\langle \frac{\partial \bm{u}}{\partial \bm{n}}- \mathbb{Q}_h(\nabla \bm{u})\cdot\bm{n},\S(\bm{v})-\bm{v}\rangle_{\partial T}, \nonumber
\end{eqnarray}
for all $\bm{v}\in [W_0(\T_h)]^2$, where $\bm{v}|_{\partial\Omega}=0$ has been used. For $(\nabla_w\cdot\bm{v}, Q_h p )_{T}$, we have
\begin{equation}\label{equ.wg.wf.2}
\begin{split}
&\sum_{T}(\nabla_w\cdot\bm{v}, Q_h p )_{T}\\
=& \sum_{T} -(\S(\bm{v}),\nabla Q_h p)_{T} + \langle\bm{v}\cdot \bm{n}, Q_h p \rangle_{\partial T} \\
=& \sum_{T}(\nabla \S(\bm{v}), Q_h p)_{T} + \langle (\bm{v} -\S(\bm{v}))\cdot \bm{n}, Q_h p \rangle_{\partial T} \\
=& \sum_{T}(\nabla \S(\bm{v}),  p)_{T}    + \langle (\bm{v} -\S(\bm{v}))\cdot \bm{n}, Q_h p \rangle_{\partial T}  \\
=& \sum_{T} -( \S(\bm{v}), \nabla p))_{T}  + \langle \S(\bm{v})\cdot \bm{n},p \rangle_{\partial T} + \langle(\bm{v} -\S(\bm{v}))\cdot \bm{n}, Q_h p\rangle_{\partial T} \\
=& \sum_{T}-(\nabla p, \S(\bm{v}))_{T} + \langle (\S(\bm{v})-\bm{v})\cdot \bm{n},p - Q_h p\rangle_{\partial T}\\
=& -(\nabla p, \S(\bm{v})) + \sum_{T} \langle (\S(\bm{v})-\bm{v})\cdot \bm{n},p - Q_h p\rangle_{\partial T}
\end{split}
\end{equation}
for all $\bm{v}\in [W_0(\T_h)]^2$, where we have used the definition for the weak divergence operator ($\nabla_w\cdot$) and the fact that $\nabla Q_h p=0$ in the second line, the identity of $\sum_{T} \langle \bm{v}\cdot \bm{n}, p \rangle_{\partial T} =0$ in the sixth line.

We now test \eqref{Equ.Stokes.1} with $\S(\bm{v})$, $\bm{v} \in [W_0(\T_h)]^2$, to obtain
\begin{eqnarray}\label{equ.wg.wf}
(-\Delta \bm{u}, \S(\bm{v}) ) + (\nabla p, \S(\bm{v})) =(\bm{f}, \S(\bm{v})).
\end{eqnarray}
Substituting equations \eqref{equ.wg.wf.1} and \eqref{equ.wg.wf.2} into \eqref{equ.wg.wf} yields
\begin{eqnarray*}
& & \sum_{T}(\nabla_w Q_b \bm{u},\nabla_w \bm{v})_T - \sum_{T}(\nabla_w\cdot\bm{v}, Q_h p )_{T}\\
&= &(\bm{f}, \S(\bm{v})) + \sum_{T}\langle \frac{\partial \bm{u}}{\partial \bm{n}}- \mathbb{Q}_h(\nabla \bm{u})\cdot\bm{n},\S(\bm{v})-\bm{v}\rangle_{\partial T}
- \sum_{T} \langle (\S(\bm{v})-\bm{v})\cdot \bm{n},p - Q_h p\rangle_{\partial T}, \nonumber
\end{eqnarray*}
which completes the proof of the lemma.
\end{proof}

The following is a result on the error equation for the SWG scheme \eqref{equ.Stokes-FD-SWG}.
\begin{lemma}
Let $\bm{e}$ and $\eta$ be the error functions for the numerical solution arising from the SWG scheme \eqref{equ.Stokes-FD-SWG}, as given in
\eqref{equ.error}. Then, we have
\begin{eqnarray}
a(\bm{e},\bm{v})-b(\bm{v},\eta) &=&\varphi_{\bm{u},p}(\bm{v}), \label{equ.error.1}\\
b(\bm{e},w)&=&0,\label{equ.error.2}
\end{eqnarray}
for all $\bm{v}\in [W_0(\T_h)]^2$ and $w \in P_0(\T_h)$, where
\begin{equation}\label{equ.error.3}
\begin{split}
\varphi_{\bm{u},p}(\bm{v}) =& \ \kappa {S}(Q_b\bm{u},\bm{v})
+ \sum_{T}\langle \frac{\partial \bm{u}}{\partial \bm{n}}- \mathbb{Q}_h(\nabla \bm{u})\cdot\bm{n},\S(\bm{v})-\bm{v}\rangle_{\partial T}\\
&\ - \sum_{T} \langle (\S(\bm{v})-\bm{v})\cdot \bm{n},p - Q_h p\rangle_{\partial T}.
\end{split}
\end{equation}
\end{lemma}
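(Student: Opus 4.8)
The plan is to obtain both error equations by subtracting the SWG scheme \eqref{equ.Stokes-FD-SWG} from identities that the $L^2$ projections $Q_b\bm{u}$ and $Q_h p$ already satisfy. For \eqref{equ.error.1}, I would start from the identity \eqref{equ.lemma.wf} of the preceding lemma, which expresses $\sum_{T}(\nabla_w Q_b\bm{u},\nabla_w\bm{v})_T-\sum_{T}(\nabla_w\cdot\bm{v},Q_h p)_T$ as $(\bm{f},\S(\bm{v}))$ plus exactly the two boundary consistency terms occurring in $\varphi_{\bm{u},p}$. Adding $\kappa S(Q_b\bm{u},\bm{v})$ to both sides and using the definitions $a(\bm{u},\bm{v})=\kappa S(\bm{u},\bm{v})+(\nabla_w\bm{u},\nabla_w\bm{v})$ and $b(\bm{v},w)=(\nabla_w\cdot\bm{v},w)$, this rewrites as
\[
a(Q_b\bm{u},\bm{v})-b(\bm{v},Q_h p)=(\bm{f},\S(\bm{v}))+\varphi_{\bm{u},p}(\bm{v})
\]
for all $\bm{v}\in[W_0(\T_h)]^2$. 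The first line of the SWG scheme \eqref{equ.Stokes-FD-SWG}, transcribed with the same bilinear forms, reads $a(\bm{u}_b,\bm{v})-b(\bm{v},p_h)=(\bm{f},\S(\bm{v}))$. Subtracting the second relation from the first and invoking the bilinearity of $a(\cdot,\cdot)$ and $b(\cdot,\cdot)$ together with $\bm{e}=Q_b\bm{u}-\bm{u}_b$, $\eta=Q_h p-p_h$ gives \eqref{equ.error.1} immediately.

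For \eqref{equ.error.2} it suffices to show $b(Q_b\bm{u},w)=0$ for every $w\in P_0(\T_h)$, since the second line of the SWG scheme gives $b(\bm{u}_b,w)=(\nabla_w\cdot\bm{u}_b,w)=0$ and hence $b(\bm{e},w)=b(Q_b\bm{u},w)-b(\bm{u}_b,w)=0$. To establish $b(Q_b\bm{u},w)=0$, I would use the defining relation \eqref{equ.weak.divergence} for the weak divergence: because $w$ is constant on each $T$,
\[
b(Q_b\bm{u},w)=\sum_{T}(\nabla_w\cdot Q_b\bm{u},w)_T=\sum_{T}w|_T\,\langle Q_b\bm{u}\cdot\bm{n},1\rangle_{\partial T}.
\]
On each edge $e_i$ the normal $\bm{n}_i$ is constant, so the scalar $w|_T\bm{n}_i$ lies (componentwise) in $W(T)$; consequently the projection property of $Q_b$ yields $\langle Q_b\bm{u}\cdot\bm{n},1\rangle_{\partial T}=\langle\bm{u}\cdot\bm{n},1\rangle_{\partial T}=\int_{\partial T}\bm{u}\cdot\bm{n}\,ds=\int_T\mathrm{div}\,\bm{u}\,dT=0$ by the divergence theorem and the incompressibility constraint $\mathrm{div}\,\bm{u}=0$. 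Summing over $T\in\T_h$ finishes this part.

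The argument is essentially bookkeeping, and I do not anticipate a genuine obstacle; the two points needing a little care are (i) consistency of signs when the SWG scheme and the identity \eqref{equ.lemma.wf} are rewritten in the $a(\cdot,\cdot)$--$b(\cdot,\cdot)$ notation, and (ii) the edge-by-edge justification of $\langle Q_b\bm{u}\cdot\bm{n},w\rangle_{\partial T}=\langle\bm{u}\cdot\bm{n},w\rangle_{\partial T}$ for piecewise-constant $w$, which relies precisely on the fact that $W(T)$ consists of the functions constant on each $e_i$.
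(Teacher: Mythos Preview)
Your proposal is correct and follows essentially the same route as the paper: you add $\kappa S(Q_b\bm{u},\bm{v})$ to both sides of \eqref{equ.lemma.wf}, recast in the $a(\cdot,\cdot)$--$b(\cdot,\cdot)$ notation, and subtract the first line of \eqref{equ.Stokes-FD-SWG} to obtain \eqref{equ.error.1}, while for \eqref{equ.error.2} you reduce $b(Q_b\bm{u},w)$ to $\int_T\mathrm{div}\,\bm{u}\,dT=0$ via the edgewise projection property of $Q_b$ and the divergence theorem. The paper's only cosmetic difference is that it invokes the identity $\nabla_w\cdot Q_b\bm{u}=Q_h(\nabla\cdot\bm{u})$ directly (essentially the computation in \eqref{equ.infsup.bilinear}) rather than spelling out the edge-by-edge argument you give.
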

\begin{proof}
By adding $\kappa {S}(Q_b\bm{u},\bm{v})$ to both side of \eqref{equ.lemma.wf} we obtain
\begin{equation} \label{equ.lemma.wf.2}
\begin{split}
 & \ a(Q_b(\bm{u}),\bm{v}) - b(\bm{v},Q_h p)\\
 = & \ \kappa {S}(Q_b\bm{u},\bm{v})+\sum_{T}(\nabla_w Q_b \bm{u},\nabla_w \bm{v})_T - \sum_{T}(\nabla_w\cdot\bm{v}, Q_h p )_{T}\\
=& \ (\bm{f}, \S(\bm{v})) + \kappa {S}(Q_b\bm{u},\bm{v}) + \sum_{T}\langle \frac{\partial \bm{u}}{\partial \bm{n}}- \mathbb{Q}_h(\nabla \bm{u})\cdot\bm{n},\S(\bm{v})-\bm{v}\rangle_{\partial T}\\
&\ \ - \sum_{T} \langle (\S(\bm{v})-\bm{v})\cdot \bm{n},p - Q_h p\rangle_{\partial T}.
\end{split}
\end{equation}
Now subtracting the first equation of \eqref{equ.Stokes-FD-SWG} from  \eqref{equ.lemma.wf.2} yields
\begin{eqnarray*}
& & a(\bm{e},\bm{v})-b(\bm{v},\eta) =\varphi_{\bm{u},p}(\bm{v}) \qquad \forall \bm{v}\in [W_0(\T_h)]^2.
\end{eqnarray*}
This completes the derivation of the error equation \eqref{equ.error.1}.

As to the second one \eqref{equ.error.2}, we have
\begin{eqnarray*}
b(\bm{e},w)=(\nabla_w\cdot\bm{e},w)
&=&(\nabla_w\cdot(Q_b\bm{u}-\bm{u}_b),w)\\
&=&(\nabla_w\cdot Q_b\bm{u},w)-(\nabla_w\cdot\bm{u}_b,w) \nonumber\\
&=&(Q_h\nabla \cdot\bm{u},w)-0 \nonumber\\
&=&(Q_h (0),w)-0 \nonumber\\
&=&0,\nonumber
\end{eqnarray*}
for all $w \in P_0(\T_h)$. This completes the proof of the lemma.
\end{proof}

\subsection{Error estimates in $H^1$} The goal here is to establish an optimal-order error estimate for the numerical solution $(\bm{u}_b; p_h)$ in a discrete $H^1$-norm for the velocity and the $L^2$ norm for the pressure. The result can be stated as follows.

\begin{theorem}\label{thm.error.1}
Let $(\bm{u};p) \in [H_0^1(\Omega)\cap H^{2}(\Omega)]^2\times (L_0^2(\Omega)\cap H^{1}(\Omega))$ be the solution of \eqref{Equ.Stokes}, and $(\bm{u}_b;p_h) \in [W_0(\T_h)]^2 \times P_0(\T_h)$ be the solution of the SWG scheme \eqref{equ.Stokes-FD-SWG}, respectively.
Then, the following error estimate holds true
\begin{eqnarray} \label{equ.thm.error.1}
\vertiii{Q_b\bm{u} -\bm{u}_b} +\|Q_h p - p_h\| \leq Ch(\|\bm{u}\|_2+\|p\|_1).
\end{eqnarray}
\end{theorem}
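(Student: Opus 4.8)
The plan is to follow the standard saddle-point error analysis machinery: combine the error equations \eqref{equ.error.1}--\eqref{equ.error.2} with the coercivity (Lemma \ref{lemma.blinear.a}) and the discrete inf-sup condition (Lemma \ref{lemma.blinear.b}), after first bounding the consistency functional $\varphi_{\bm{u},p}(\bm{v})$ by $Ch(\|\bm{u}\|_2 + \|p\|_1)\vertiii{\bm{v}}$. Concretely, I would first take $\bm{v}=\bm{e}$ in \eqref{equ.error.1}; since $b(\bm{e},\eta)=0$ by \eqref{equ.error.2}, this gives $\vertiii{\bm{e}}^2 = a(\bm{e},\bm{e}) = \varphi_{\bm{u},p}(\bm{e})$, so once the consistency bound is in hand we get $\vertiii{Q_b\bm{u}-\bm{u}_b}\le Ch(\|\bm{u}\|_2+\|p\|_1)$. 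Then for the pressure error $\eta\in P_0(\T_h)$, I would use the inf-sup condition to write $\beta\|\eta\| \le \sup_{\bm{v}}\frac{b(\bm{v},\eta)}{\vertiii{\bm{v}}}$, rearrange \eqref{equ.error.1} to get $b(\bm{v},\eta) = a(\bm{e},\bm{v}) - \varphi_{\bm{u},p}(\bm{v})$, and bound the right side by $(\vertiii{\bm{e}} + Ch(\|\bm{u}\|_2+\|p\|_1))\vertiii{\bm{v}}$ using boundedness of $a(\cdot,\cdot)$ and the consistency estimate; dividing by $\vertiii{\bm{v}}$ and taking the supremum yields $\|\eta\|\le Ch(\|\bm{u}\|_2+\|p\|_1)$.

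The technical core, and the step I expect to be the main obstacle, is the estimate
\[
|\varphi_{\bm{u},p}(\bm{v})| \le Ch(\|\bm{u}\|_2 + \|p\|_1)\,\vertiii{\bm{v}}\qquad \forall\,\bm{v}\in [W_0(\T_h)]^2,
\]
where $\varphi_{\bm{u},p}$ is given by \eqref{equ.error.3}. I would treat its three pieces separately. For the stabilizer term $\kappa S(Q_b\bm{u},\bm{v})$, I would use Cauchy--Schwarz in the $S$-inner product to bound it by $\kappa S(Q_b\bm{u},Q_b\bm{u})^{1/2}\vertiii{\bm{v}}$, then show $S(Q_b\bm{u},Q_b\bm{u}) \le Ch\|\bm{u}\|_2^2$; this follows from the representation in \eqref{Def-ST-poly} as $h^{-1}\|Q_b\S(Q_b\bm{u})-Q_b\bm{u}\|_{\partial T}^2$ together with trace/approximation estimates of the form $\|Q_0\bm{u}-\bm{u}\|_{\partial T}^2 \le C(h^{-1}\|Q_0\bm{u}-\bm{u}\|_T^2 + h\|\nabla(Q_0\bm{u}-\bm{u})\|_T^2) \le Ch^2\|\bm{u}\|_{2,T}^2$, mimicking the computation already carried out in \eqref{equ.norm.2}. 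For the second term, I would use the approximation property $\|\tfrac{\partial\bm{u}}{\partial\bm{n}} - \mathbb{Q}_h(\nabla\bm{u})\cdot\bm{n}\|_{\partial T} \le Ch^{1/2}\|\bm{u}\|_{2,T}$ (trace plus $L^2$-projection estimate) and the fact that $\S(\bm{v})-\bm{v}$ has mean-zero-type structure so that $\|\S(\bm{v})-\bm{v}\|_{\partial T} \le Ch^{1/2}\vertiii{\bm{v}}_T$ — this last bound is essentially the definition of the stabilizer via \eqref{Def-ST-poly}. For the third term, I would similarly combine $\|p - Q_h p\|_{\partial T} \le Ch^{1/2}\|p\|_{1,T}$ with the same $\|\S(\bm{v})-\bm{v}\|_{\partial T}$ estimate. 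Summing over $T$ and applying Cauchy--Schwarz over the elements then produces the claimed bound; the subtlety, as the paper emphasizes, is that unlike the original weak Galerkin there is no interior unknown, so the control of $\S(\bm{v})-\bm{v}$ must come entirely from the stabilizer $S_T$, which is why the design of $S_T$ in \eqref{Def-ST-poly} matters.

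Finally I would assemble the pieces. Having $|\varphi_{\bm{u},p}(\bm{v})| \le Ch(\|\bm{u}\|_2+\|p\|_1)\vertiii{\bm{v}}$, the coercivity step gives the velocity bound directly, and the inf-sup step gives the pressure bound with the velocity bound fed in; adding the two yields \eqref{equ.thm.error.1}. I would also note that the smoothness hypotheses $\bm{u}\in [H^2]^2$, $p\in H^1$ are exactly what is needed for the approximation estimates on $Q_0$, $\mathbb{Q}_h$, and $Q_h$ to produce the stated powers of $h$, and that the constant $C$ depends only on the shape-regularity of $\T_h$ and the domain through $\beta$. The only genuinely new work relative to \cite{wy-stokes} is the stabilizer estimate $S(Q_b\bm{u},Q_b\bm{u})\le Ch\|\bm{u}\|_2^2$ and the trace-type bound on $\S(\bm{v})-\bm{v}$; everything else is the textbook Babu\v{s}ka--Brezzi argument.
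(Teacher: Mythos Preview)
Your overall architecture is exactly the paper's: set $\bm{v}=\bm{e}$ to get $\vertiii{\bm{e}}^2=\varphi_{\bm{u},p}(\bm{e})$, bound each of the three pieces of $\varphi_{\bm{u},p}$ by $Ch(\|\bm{u}\|_2+\|p\|_1)\vertiii{\bm{v}}$, then recover $\|\eta\|$ from the inf-sup condition applied to $b(\bm{v},\eta)=a(\bm{e},\bm{v})-\varphi_{\bm{u},p}(\bm{v})$.

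Two small corrections. First, for the stabilizer term you need $S(Q_b\bm{u},Q_b\bm{u})\le Ch^2\|\bm{u}\|_2^2$ (not $Ch$); your trace computation should give $\|Q_0\bm{u}-\bm{u}\|_{\partial T}^2\le Ch^3\|\bm{u}\|_{2,T}^2$, and the $h^{-1}$ in $S_T$ then leaves $Ch^2$. Second, and more substantively, the bound $\|\S(\bm{v})-\bm{v}\|_{\partial T}\le Ch^{1/2}\vertiii{\bm{v}}_T$ is \emph{not} ``essentially the definition of the stabilizer'': the stabilizer controls only $\|Q_b\S(\bm{v})-\bm{v}\|_{\partial T}$, i.e.\ the midpoint discrepancy, whereas $\S(\bm{v})-\bm{v}$ is genuinely linear along each edge. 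The paper closes this gap via the intermediate estimate \eqref{EQ:Hi-Feb27-001},
\[
\|\S(\bm{v})-\bm{v}\|_{\partial T}\le C\|Q_b\S(\bm{v})-\bm{v}\|_{\partial T}+Ch^{1/2}\|\nabla_w\bm{v}\|_T,
\]
which in turn uses $\|\nabla\S(\bm{v})\|_T^2\le \|\nabla_w\bm{v}\|_T^2+Ch^{-1}\|Q_b\S(\bm{v})-\bm{v}\|_{\partial T}^2$. So the control of $\S(\bm{v})-\bm{v}$ comes from \emph{both} pieces of $\vertiii{\cdot}$, not from $S_T$ alone; your stated inequality is correct, but the reason you give would not produce it.
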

\begin{proof}
By letting $\bm{v}=\bm{e}$ in \eqref{equ.error.1} and then using \eqref{equ.error.2} with $w=\eta$, we have
\begin{equation}\label{equ.EE.1}
\begin{split}
\vertiii{\bm{e}}^2=&\ \varphi_{\bm{u},p}(\bm{e})\\
=&\ \kappa {S}(Q_b\bm{u},\bm{e}) + \sum_{T}\langle \frac{\partial \bm{u}}{\partial \bm{n}}- \mathbb{Q}_h(\nabla \bm{u})\cdot\bm{n},\S(\bm{e})-\bm{e}\rangle_{\partial T}\\
&\ - \sum_{T} \langle (\S(\bm{e})-\bm{e})\cdot \bm{n},p - Q_h p\rangle_{\partial T}.
\end{split}
\end{equation}
From the Cauchy-Schwarz inequality and \eqref{equ.norm.2}, for the first term in \eqref{equ.EE.1}, we have
\begin{equation} \label{eq.EE.H1.1}
\begin{split}
|S(Q_b\bm{u},\bm{e})|=&\ \bigg|h^{-1}\sum_T \langle Q_b\S(Q_b\bm{u})- Q_b\bm{u}, Q_b\S(\bm{e})-Q_b\bm{e}\rangle_{\partial T}\bigg|~~\\
=&\ \bigg|h^{-1}\sum_T \langle Q_b(Q_0\bm{u})- Q_b\bm{u}, Q_b\S(\bm{e})-Q_b\bm{e}\rangle_{\partial T}\bigg|\\
=&\ \bigg|h^{-1}\sum_T \langle Q_b(Q_0\bm{u}- \bm{u}), Q_b\S(\bm{e})-Q_b\bm{e}\rangle_{\partial T}\bigg|\\
\leq &\ \left(h^{-2}\|Q_0\bm{u}- \bm{u}\| +\|\nabla(Q_0\bm{u}- \bm{u})\| \right)^{\frac{1}{2}} S(\bm{e},\bm{e})^{\frac{1}{2}}\\
\leq &\ C h\|\bm{u}\|_2 \vertiii{\bm{e}}.
\end{split}
\end{equation}

Next, for any constant tensor $\bm{\phi}$, we have from the definition of the weak gradient that
\begin{eqnarray*}
(\nabla_w\cdot \bm{e}, \bm{\phi})_T &=& \langle \bm{e}, \bm{\phi}\cdot\bm{n}\rangle_\pT\\
&=& \langle\bm{e}- \S(\bm{e}), \bm{\phi}\cdot\bm{n}\rangle_\pT + \langle \S(\bm{e}), \bm{\phi}\cdot\bm{n}\rangle_\pT\\
&=& \langle\bm{e}- Q_b\S(\bm{e}), \bm{\phi}\cdot\bm{n}\rangle_\pT + (\nabla \S(\bm{e}), \bm{\phi})_T,
\end{eqnarray*}
which gives
\begin{eqnarray*}
\|\nabla \S(\bm{e})\|_T^2 \leq  \|\nabla_w \bm{e}\|_T^2 + C h^{-1}\|\bm{e}-Q_b\S(\bm{e})\|_\pT^2.
\end{eqnarray*}
It follows that
\begin{equation}\label{EQ:Hi-Feb27-001}
\begin{split}
\|\bm{e}-\S(\bm{e})\|_\pT & \leq \|\bm{e}-Q_b\S(\bm{e})\|_\pT + \|(I-Q_b)\S(\bm{e})\|_\pT \\
& \leq \|\bm{e}-Q_b\S(\bm{e})\|_\pT + C h^{1/2}\|\nabla \S(\bm{e})\|_T \\
& \leq C \|\bm{e}-Q_b\S(\bm{e})\|_\pT + C h^{1/2}\|\nabla_w \bm{e}\|_T.
\end{split}
\end{equation}
From the Cauchy-Schwarz inequality and the estimate \eqref{EQ:Hi-Feb27-001} we obtain
\begin{eqnarray*}
&&\bigg|\langle \frac{\partial \bm{u}}{\partial \bm{n}}- Q_0(\nabla \bm{u})\cdot\bm{n},\bm{e}-\S(\bm{e})\rangle_{\partial T}\bigg|\\
&\leq &  \|\frac{\partial \bm{u}}{\partial \bm{n}}- Q_0(\nabla \bm{u})\cdot\bm{n}\|_\pT \|\bm{e}-\S(\bm{e})\|_\pT\\
&\leq & C \|\frac{\partial \bm{u}}{\partial \bm{n}}- Q_0(\nabla \bm{u})\cdot\bm{n}\|_\pT \left( \|\bm{e}-Q_b\S(\bm{e})\|_\pT + h^{1/2} \|\nabla_w\bm{e}\|_T\right).
\end{eqnarray*}
Now summing over all the elements yields an estimate for the second term in \eqref{equ.EE.1}:
\begin{equation}\label{eq.EE.H1.2}
\begin{split}
&\sum_T\bigg|\langle \frac{\partial \bm{u}}{\partial \bm{n}}- Q_0(\nabla \bm{u})\cdot\bm{n},\bm{e}-\S(\bm{e})\rangle_{\partial_T}\bigg|\\
\leq & \sum_T \|\frac{\partial \bm{u}}{\partial \bm{n}}- Q_0(\nabla \bm{u})\cdot\bm{n}\|_\pT \left( \|\bm{e}-Q_b\S(\bm{e})\|_\pT + Ch^{1/2} \|\nabla_w\bm{e}\|_T\right)\\
\leq & C \left(\|\nabla \bm{u}-Q_0(\nabla \bm{u})\|^2_0 + h^2\|\nabla^2 \bm{u} \|_{0}\right)^{\frac{1}{2}}\left( \|\nabla_w \bm{e}\|^2+ \kappa S(\bm{e}, \bm{e})\right)^{\frac{1}{2}}\\
\leq & Ch\|\bm{u}\|_2\vertiii{\bm{e}}.
\end{split}
\end{equation}
Similarly, we have
\begin{eqnarray}\label{eq.EE.H1.3}
\left|\sum_{T} \langle (\S(\bm{e})-\bm{e})\cdot \bm{n},p - Q_h p\rangle_{\partial T}\right|&\leq & Ch\|p\|_1\vertiii{\bm{e}}.
\end{eqnarray}
Combining the estimates \eqref{eq.EE.H1.1}, \eqref{eq.EE.H1.2}, and \eqref{eq.EE.H1.3} with \eqref{equ.EE.1} yields the first part of the error estimate \eqref{equ.thm.error.1}:
\begin{eqnarray} \label{equ.thm.error.12}
\vertiii{Q_b\bm{u} -\bm{u}_b} \leq Ch(\|\bm{u}\|_2+\|p\|_1).
\end{eqnarray}

To estimate $\|\eta\|$, we use equation \eqref{equ.error.1} to obtain
\begin{eqnarray*}
b(\bm{v},\eta) =a(\bm{e},\bm{v})-\varphi_{\bm{u},p}(\bm{v}).
\end{eqnarray*}
Using the equation above, \eqref{eq.EE.H1.1}-\eqref{eq.EE.H1.3}, and \eqref{equ.boundedness}, we arrive at
\begin{eqnarray*}
|b(\bm{v},\eta)| \leq Ch(\|\bm{u}\|_2+\|p\|_1)\vertiii{\bm{v}}.
\end{eqnarray*}
Combining the above estimate with the {\em inf-sup} condition \eqref{equ.infsup.infsup} gives
\begin{eqnarray*}
\| \eta \| \leq Ch(\|\bm{u}\|_2+\|p\|_1),
\end{eqnarray*}
which, together with \eqref{equ.thm.error.12}, yields the desired estimate \eqref{equ.thm.error.1}.
\end{proof}

\subsection{Error estimates in $L^2$}
To derive an $L^2$-error estimate for the velocity approximation, we consider the problem of seeking $(\bm{\psi};\xi)$ such that
\begin{equation}\label{Equ.Stokes.dual}
\left\{
\begin{array}{rrl}
-\Delta \bm{\psi}+ \nabla \xi &=\bm{g}  &\quad \text{ in }\ \Omega\\
{\rm div} \bm{\psi}&=0                  &\quad \text{ in }\ \Omega\\
 \bm{\psi}&=0                           &\quad \text{ on }\ \Gamma=\partial \Omega.
\end{array}
\right.
\end{equation}
Assume that the problem \eqref{Equ.Stokes.dual} has the $[H^2(\Omega)]^2\times H^1(\Omega)$-regularity in the sense that the solution
$(\bm{\psi};\xi) \in [H^2(\Omega)]^2\times H^1(\Omega)$ and the following a priori estimate holds true:
\begin{equation}\label{equ.regularity}
\|\bm{\psi}\|_2+\|\xi\|_1\leq C\|\bm{g}\|
\end{equation}
for any $\bm{g}\in [L^2(\Omega)]^2$.

\begin{theorem}\label{thm.error.3}
Let $(\bm{u};p) \in [H_0^1(\Omega)\cap H^{2}(\Omega)]^2\times (L_0^2(\Omega)\cap H^{1}(\Omega))$ be the solution of \eqref{Equ.Stokes}, and $(\bm{u}_b;p_h) \in [W_0(\T_h)]^2 \times P_0(\T_h)$ be the solution of the SWG scheme \eqref{equ.Stokes-FD-SWG}, respectively.
Then, the following error estimate holds true:
\begin{eqnarray} \label{equ.thm.error.3}
\|\S(Q_b \bm{u}) - \S(\bm{u}_b)\| \leq Ch^2 (\|\bm{u}\|_2+\|p\|_1).
\end{eqnarray}
\end{theorem}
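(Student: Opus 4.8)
The plan is to use a standard duality (Aubin--Nitsche) argument. First I would take $\bm{g}\in[L^2(\Omega)]^2$ arbitrary and let $(\bm{\psi};\xi)$ be the solution of the dual Stokes problem \eqref{Equ.Stokes.dual}, which enjoys the $H^2\times H^1$ regularity \eqref{equ.regularity}. Since the dual problem has the same structure as the primal one, the previous lemma (equation \eqref{equ.lemma.wf}) applies with $(\bm{\psi};\xi)$ in place of $(\bm{u};p)$, giving a weak formulation identity relating $\sum_T(\nabla_w Q_b\bm{\psi},\nabla_w\bm{v})_T$ and $\sum_T(\nabla_w\cdot\bm{v},Q_h\xi)_T$ to $(\bm{g},\S(\bm{v}))$ plus two boundary consistency terms. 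I would then substitute $\bm{v}=\bm{e}=Q_b\bm{u}-\bm{u}_b$ into that identity, and exploit the symmetry of the bilinear forms $a(\cdot,\cdot)$ and $b(\cdot,\cdot)$ together with the primal error equations \eqref{equ.error.1}--\eqref{equ.error.2}, so that the quantity $(\bm{g},\S(\bm{e}))$ gets expressed as a sum of: (i) the stabilizer cross-term $\kappa S(Q_b\bm{u},\bm{e}_\psi\text{-type terms})$, (ii) boundary consistency terms involving $\partial\bm{u}/\partial\bm{n}-\mathbb{Q}_h(\nabla\bm{u})\cdot\bm{n}$ paired against $\S(\bm{\psi})-\bm{\psi}$-type differences, and (iii) the analogous pressure consistency terms. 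Choosing $\bm{g}=\S(Q_b\bm{u})-\S(\bm{u}_b)=\S(\bm{e})$ then turns the left side into $\|\S(\bm{e})\|^2$ (after noting $\S$ is linear so $\S(Q_b\bm{u})-\S(\bm{u}_b)=\S(\bm{e})$, and that $(\S(\bm{e}),\S(\bm{e}))$ differs from $(\bm{g},\S(\bm{e}))$ only by the choice of test extension—care is needed here, see below).

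The second step is to estimate each term on the right using the $H^1$-error bound \eqref{equ.thm.error.1}, the extra factor of $h$ coming from the regularity of the dual solution, and approximation properties of $Q_0$, $Q_b$, $\mathbb{Q}_h$. Concretely: the stabilizer term is controlled by $\kappa|S(Q_b\bm{u},\cdot)|\le Ch\|\bm{u}\|_2 \cdot (\text{norm of dual error})$ exactly as in \eqref{eq.EE.H1.1}, and the dual-error norm is in turn $\le Ch\|\bm{g}\|=Ch\|\bm{\psi}\|_2$ via \eqref{equ.thm.error.1} applied to the dual problem; this produces $Ch^2$. The consistency terms of type (ii)--(iii) are bounded, via Cauchy--Schwarz and the trace/inverse estimates already used in \eqref{EQ:Hi-Feb27-001}--\eqref{eq.EE.H1.3}, by $\big(\|\nabla\bm{u}-Q_0\nabla\bm{u}\|_0+h\|\nabla^2\bm{u}\|_0+\|p-Q_hp\|_0+\dots\big)$ times the corresponding norm of the dual-side difference $\S(\bm{\psi})-\bm{\psi}$ (and $\S(\tilde{\bm{v}})-\tilde{\bm{v}}$), each of which carries a factor $h^{3/2}$ or $h$ against $\|\bm{\psi}\|_2$; multiplying the two families of bounds again gives $O(h^2)(\|\bm{u}\|_2+\|p\|_1)\|\bm{g}\|$. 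Summing and dividing by $\|\bm{g}\|=\|\S(\bm{e})\|$ yields \eqref{equ.thm.error.3}.

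The main obstacle I anticipate is the absence of interior degrees of freedom: unlike the classical WG analysis, here the natural projection of $\bm{u}$ lives only on $\partial T$, so the duality pairing must be set up entirely through the linear extension operator $\S$, and one must be careful that $(\bm{g},\S(\bm{v}))$ with $\bm{g}=\S(\bm{e})$ genuinely dominates $\|\S(\bm{e})\|^2$ up to harmless lower-order terms. This requires an auxiliary estimate controlling $\|\S(\bm{v})-\bm{v}\|$ and $\|\S(\bm{v})-Q_b\S(\bm{v})\|$ by $h^{1/2}\|\nabla_w\bm{v}\|_T$ plus the stabilizer (an $L^2$-on-$\partial T$ version of \eqref{EQ:Hi-Feb27-001}), and a companion $L^2$-in-$T$ bound $\|\S(\bm{v})\|_T\le C\|\bm{v}\|$ that follows from \eqref{EQ:S-Property:0212} and a Poincaré-type inequality on $W_0(\T_h)$. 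A second delicate point is tracking where the superconvergence-type cancellations give $h^{3/2}$ rather than merely $h$ in the consistency terms; on rectangular partitions the symmetry identities \eqref{EQ:S-Property:021} and \eqref{EQ:July01:001} are what make the midpoint rule exact to higher order, and those identities will have to be invoked on the dual side as well. Once these auxiliary bounds are in place, the rest is the routine bookkeeping sketched above.
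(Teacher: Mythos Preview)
Your overall strategy is exactly the paper's: set $\bm{g}=\S(\bm{e})$, apply the consistency lemma \eqref{equ.lemma.wf} to the dual pair $(\bm{\psi};\xi)$ with test function $\bm{v}=\bm{e}$, use $b(\bm{e},Q_h\xi)=0$ and $b(Q_b\bm{\psi},\eta)=0$, and invoke the primal error equation \eqref{equ.error.1} with $\bm{v}=Q_b\bm{\psi}$ to arrive at the duality identity
\[
\|\S(\bm{e})\|^2=\varphi_{\bm{u},p}(Q_b\bm{\psi})-\varphi_{\bm{\psi},\xi}(\bm{e}).
\]
The second term is indeed handled by the $H^1$ argument plus \eqref{equ.regularity}, as you say.

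Two points deserve correction. First, your last paragraph is misplaced: the rectangular symmetry identities \eqref{EQ:S-Property:021}, \eqref{EQ:July01:001} and the ``$h^{3/2}$ cancellations'' play \emph{no role} in this theorem. The $L^2$ estimate holds on general polygonal partitions; those identities are used only in the superconvergence result of Section~\ref{sectionSuperC}. Second, your mechanism for getting $O(h^2)$ out of $\varphi_{\bm{u},p}(Q_b\bm{\psi})$ is off. No numerical dual solution is computed, so there is no ``dual error'' to which \eqref{equ.thm.error.1} applies. The extra factor of $h$ comes instead from the smoothness and single-valuedness of $\bm{\psi}$ itself: in each boundary term one inserts $\bm{\psi}$, e.g.\ $\S(Q_b\bm{\psi})-Q_b\bm{\psi}=(\S(Q_b\bm{\psi})-\bm{\psi})+(\bm{\psi}-Q_b\bm{\psi})$, and observes that $\sum_T\langle\partial\bm{u}/\partial\bm{n}-\mathbb{Q}_h(\nabla\bm{u})\cdot\bm{n},\,Q_b\bm{\psi}-\bm{\psi}\rangle_{\partial T}=0$ because the integrand is single-valued on interior edges (contributions cancel) and $\bm{\psi}=0$ on $\partial\Omega$. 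What remains is paired against $\S(Q_b\bm{\psi})-\bm{\psi}=\S(Q_b(\bm{\psi}-Q_0\bm{\psi}))+(Q_0\bm{\psi}-\bm{\psi})$, whose $\partial T$-norm is controlled by $\|Q_0\bm{\psi}-\bm{\psi}\|_{\partial T}$ via \eqref{EQ:July01:10}, giving the needed $Ch^{3/2}\|\bm{\psi}\|_2$ per element and hence $Ch^2\|\bm{u}\|_2\|\bm{\psi}\|_2$ overall. The stabilizer term $S(Q_b\bm{u},Q_b\bm{\psi})$ is bounded directly by $Ch^2\|\bm{u}\|_2\|\bm{\psi}\|_2$ using \eqref{EQ:July01:09} on both factors, not by a triple-bar dual error estimate.
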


\begin{proof}
Let $(\bm{\psi};\xi)$ be the solution of \eqref{Equ.Stokes.dual} with $\bm{g}=\S(\bm{e})=\S(Q_h\bm{u}-\bm{u}_b)$.
From \eqref{equ.lemma.wf}, we have for any $\bm{v} \in [W_0(\T_h)]^2$
\begin{equation}\label{equ.lemma.wf.EE.1}
\begin{split}
& \sum_{T}(\nabla_w Q_b \bm{\psi},\nabla_w \bm{v})_T - \sum_{T}(\nabla_w\cdot\bm{v}, Q_h \xi )_{T}\\
= &(\S(\bm{e}), \S(\bm{v})) + \sum_{T}\langle \frac{\partial \bm{\psi}}{\partial \bm{n}}- \mathbb{Q}_h(\nabla \bm{\psi})\cdot\bm{n},\S(\bm{v})-\bm{v}\rangle_{\partial T}\\
& - \sum_{T} \langle (\S(\bm{v})-\bm{v})\cdot \bm{n},\xi - Q_h \xi\rangle_{\partial T}.
\end{split}
\end{equation}
In particular, by letting $\bm{v} =\bm{e}$ we obtain
\begin{equation*}
\begin{split}
& \sum_{T}(\nabla_w Q_b \bm{\psi},\nabla_w \bm{e})_T - \sum_{T}(\nabla_w\cdot\bm{e}, Q_h \xi )_{T}\\
= &\ (\S(\bm{e}), \S(\bm{e})) + \sum_{T}\langle \frac{\partial \bm{\psi}}{\partial \bm{n}}- \mathbb{Q}_h(\nabla \bm{\psi})\cdot\bm{n},\S(\bm{e})-\bm{e}\rangle_{\partial T}\\
& \; \; - \sum_{T} \langle (\S(\bm{e})-\bm{e})\cdot \bm{n},\xi - Q_h \xi\rangle_{\partial T}.
\end{split}
\end{equation*}
By adding and subtracting $\kappa {S}(Q_b\bm{\psi},\bm{e})$ in the above equation we arrive at
\begin{equation}\label{EQ:July01:002}
\begin{split}
 \|\S(\bm{e})\|^2 =&\ a(Q_b \bm{\psi},\bm{e}) -b(\bm{e},Q_h \xi) \\
&- \kappa {S}(Q_b\bm{\psi},\bm{e})- \sum_{T}\langle \frac{\partial \bm{\psi}}{\partial \bm{n}}- \mathbb{Q}_h(\nabla \bm{\psi})\cdot\bm{n},\S(\bm{e})-\bm{e}\rangle_{\partial T}\\
& + \sum_{T} \langle (\S(\bm{e})-\bm{e})\cdot \bm{n},\xi - Q_h \xi\rangle_{\partial T} \\
=&\ a(Q_b \bm{\psi},\bm{e}) -b(\bm{e},Q_h \xi) -\varphi_{\bm{\psi},\xi}(\bm{e}).
\end{split}
\end{equation}
From \eqref{equ.error.2} and the second equation of \eqref{Equ.Stokes.dual} we have
\begin{eqnarray}
b(\bm{e},Q_h \xi)=0,\qquad b( Q_b \bm{\psi},\eta)=0.~~
\end{eqnarray}
Combining the above two equations gives
\begin{eqnarray}
 \|\S(\bm{e})\|^2 &=& a(\bm{e},Q_b \bm{\psi})-b( Q_b \bm{\psi},\eta)-\varphi_{\bm{\psi},\xi}(\bm{e}).
\end{eqnarray}
Using \eqref{equ.error.1} and the equation above, we obtain
\begin{eqnarray}\label{equ.stokes.L2.error}
 \|\S(\bm{e})\|^2 &=& \varphi_{\bm{u},p}(Q_b \bm{\psi}) -\varphi_{\bm{\psi},\xi}(\bm{e}).
\end{eqnarray}
The rest of the proof will establish some estimates for the right-hand side of \eqref{equ.stokes.L2.error}.

The following are two useful estimates in the forthcoming analysis. For any $\bm{v}\in [W(T)]^2$, we have
\begin{eqnarray}
  \|\bm{v} - Q_b\S(\bm{v})\|_{\pT} & \leq & \|\bm{v} - Q_b\bm{\phi}\|_{\pT}\quad \forall \bm{\phi}\in [P_1(T)]^2,\label{EQ:July01:09}\\
  \|\S(\bm{v})\|_\pT & \leq & C \|\bm{v}\|_\pT. \label{EQ:July01:10}
\end{eqnarray}
The inequality \eqref{EQ:July01:09} is a direct consequence of the definition of $\S(\bm{v})$ as the projection of $\bm{v}$ into the space of linear functions with respect to the norm $\|\cdot\|_\pT$. The inequality \eqref{EQ:July01:10} can be derived by turning $\|\S(\bm{v})\|_\pT$ into a discrete norm using the midpoints $M_i$.

Each of the terms in $\varphi_{\bm{u},p}(Q_b \bm{\psi})$; namely,
\begin{eqnarray*}
& & \varphi_{\bm{u},p}(Q_b \bm{\psi})\\
&=&\kappa {S}(\bm{u}, Q_b \bm{\psi})
+\sum_{T}\langle \frac{\partial \bm{u}}{\partial \bm{n}}- Q_h(\nabla \bm{u})\cdot\bm{n},\S(Q_b\bm{\psi})-Q_b\bm{\psi}\rangle_{\partial T}\\
&& -\sum_{T} \langle (\S(Q_b \bm{\psi})-Q_b \bm{\psi})\cdot \bm{n},p - Q_h p\rangle_{\partial T},\nonumber
\end{eqnarray*}
can be handled as follows:
\begin{itemize}
\item For the stability term $\kappa {S}(\bm{u},Q_b\bm{\psi})$, one has from the orthogonality property of the extension operator $\S(\cdot)$
\begin{equation*}
\begin{split}
|{S}(\bm{u}, Q_b \bm{\psi})|=&\ \bigg|h^{-1}\sum_T\langle Q_b \S(\bm{u})- Q_b\bm{u}, Q_b\S(Q_b \bm{\psi}) - Q_b \bm{\psi}\rangle_\pT \bigg| \\
=&\ \bigg|h^{-1}\sum_T\langle Q_b(Q_0 \bm{u})- Q_b\bm{u}, Q_b\S(Q_b \bm{\psi}) - Q_b \bm{\psi}\rangle_\pT \bigg| \\
\leq &\ h^{-1}\sum_T \|Q_0\bm{u}-\bm{u}\|_\pT \|Q_b\S(Q_b \bm{\psi}) - Q_b \bm{\psi}\|_\pT.
\end{split}
\end{equation*}
From \eqref{EQ:July01:09} with $\bm{v}=Q_b \bm{\psi}$ and $\bm{\phi}=Q_0\bm{\psi}$ we have
\begin{equation}\label{equ.EL2.1}
\begin{split}
|{S}(\bm{u}, Q_b \bm{\psi})| \leq &\ h^{-1}\sum_T \|Q_0\bm{u}-\bm{u}\|_\pT \|Q_b\S(Q_b \bm{\psi}) - Q_b \bm{\psi}\|_\pT\\
\leq &\ h^{-1}\sum_T \|Q_0\bm{u}-\bm{u}\|_\pT \|Q_b(Q_0\bm{\psi}) - Q_b \bm{\psi}\|_\pT\\
\leq& \ h^{-1} \left( \sum_{T}\|Q_0 \bm{u}- \bm{u}\|_{\partial T}^2 \right)^{\frac{1}{2}}\left( \sum_{T}\|Q_0 \bm{\psi}- \bm{\psi}\|_{\partial T}^2 \right)^{\frac{1}{2}} \\
\leq&\ Ch^2\|\bm{u}\|_2\|\bm{\psi}\|_2.
\end{split}
\end{equation}
\item For the second term in $\varphi_{\bm{u},p}(Q_b \bm{\psi})$, since $\bm{\psi}=0$ on $\partial \Omega$, we have
\begin{eqnarray*}
 \sum_{T}\langle \frac{\partial \bm{u}}{\partial \bm{n}}- \mathbb{Q}_h(\nabla \bm{u})\cdot\bm{n}, Q_b\bm{\psi}-\bm{\psi}\rangle_{\partial T}
=\sum_{T}\langle \frac{\partial \bm{u}}{\partial \bm{n}}, Q_b\bm{\psi}-\bm{\psi}\rangle_{\partial T}
=0,
\end{eqnarray*}
which leads to
\begin{eqnarray*}
&& \bigg|\sum_{T}\langle \frac{\partial \bm{u}}{\partial \bm{n}}- \mathbb{Q}_h(\nabla \bm{u})\cdot\bm{n},\S(Q_b\bm{\psi})-Q_b\bm{\psi}\rangle_{\partial T}\bigg|\\
&=&\bigg|\sum_{T}\langle \frac{\partial \bm{u}}{\partial \bm{n}}- \mathbb{Q}_h(\nabla \bm{u})\cdot\bm{n},\S(Q_b\bm{\psi})-\bm{\psi}\rangle_{\partial T}\bigg| \nonumber\\
&=&\bigg|\sum_{T}\langle \frac{\partial \bm{u}}{\partial \bm{n}}- \mathbb{Q}_h(\nabla \bm{u})\cdot\bm{n},\S(Q_b\bm{\psi})-\S(Q_b(Q_0\bm{\psi})) + Q_0\bm{\psi} - \bm{\psi}\rangle_{\partial T}\bigg|,
\end{eqnarray*}
where we have used the fact that
$$
\S(Q_b(Q_0\bm{\psi})) = Q_0\bm{\psi}.
$$
It follows from \eqref{EQ:July01:10} that
\begin{equation}\label{equ.EL2.2}
\begin{split}
& \bigg|\sum_{T}\langle \frac{\partial \bm{u}}{\partial \bm{n}}- \mathbb{Q}_h(\nabla \bm{u})\cdot\bm{n},\S(Q_b\bm{\psi})-Q_b\bm{\psi}\rangle_{\partial T}\bigg|\\
 \leq & \left( \sum_{T} \|\nabla \bm{u} \cdot \bm{n} - \mathbb{Q}_h(\nabla \bm{u})\cdot\bm{n} \|_{\partial T}^2\right)^{\frac{1}{2}} \\
& \cdot \left( \sum_{T} \| Q_0\bm{\psi} - \bm{\psi}\|_{\partial T}^2 + \|\S(Q_b(\bm{\psi} - Q_0\bm{\psi} ))\|_{\partial T}^2\right)^{\frac{1}{2}}, \\
 \leq & C \left( \sum_{T} \|\nabla \bm{u} \cdot \bm{n} - \mathbb{Q}_h(\nabla \bm{u})\cdot\bm{n} \|_{\partial T}^2\right)^{\frac{1}{2}}
         \left(\sum_{T} \| Q_0\bm{\psi} - \bm{\psi}\|_{\partial T}^2 \right)^{\frac{1}{2}} \\
 \leq & Ch^2\|u\|_2 \|\bm{\psi}\|_2.
 \end{split}
\end{equation}
\item As to the last term in $\varphi_{\bm{u},p}(Q_b \bm{\psi})$, we have
\begin{equation}\label{equ.EL2.3}
\begin{split}
& \bigg|\sum_{T} \langle (\S(Q_b \bm{\psi})-Q_b \bm{\psi})\cdot \bm{n},p - Q_h p\rangle_{\partial T}\bigg| \\
= & \bigg|\sum_{T} \langle (\S(Q_b \bm{\psi})- \bm{\psi})\cdot \bm{n},p - Q_h p\rangle_{\partial T}\bigg| \\
=&\bigg|\sum_{T} \langle (\S(Q_b \bm{\psi}) - \S(Q_bQ_0\bm{\psi}))\cdot\bm{n} + (Q_0 \bm{\psi}- \bm{\psi})\cdot \bm{n},p - Q_h p\rangle_{\partial T} \bigg| \\
\leq & C \left(\sum_{T} h\|p-Q_hp\|^2_{\partial T} \right)^{\frac{1}{2}}\left(\sum_{T} h^{-1}\|Q_0 \bm{\psi}- \bm{\psi}\|^2_{\partial T} \right)^{\frac{1}{2}}\\
\leq & Ch^2\|p\|_1 \|\bm{\psi}\|_2.
\end{split}
\end{equation}
\end{itemize}
The estimates \eqref{equ.EL2.1}, \eqref{equ.EL2.2} and \eqref{equ.EL2.3}, together with the regularity \eqref{equ.regularity}, collectively yield
\begin{eqnarray}\label{equ.stokes.L2.1}
|\varphi_{\bm{u},p}(Q_b \bm{\psi})|
&\leq& Ch^{2}(\|u\|_2+\|p\|_1)\|\bm{\psi}\|_2\\
&\leq& Ch^{2}(\|u\|_2+\|p\|_1)\|\S(\bm{e})\|.\nonumber
\end{eqnarray}

Next, using the same argument as in the proof of Theorem \ref{thm.error.1} and the $H^2$-regularity assumption \eqref{equ.regularity} we arrive at
\begin{eqnarray*}
|\varphi_{\bm{\psi},\xi}(\bm{e})|\leq Ch(\|\psi\|_2 +\|\xi\|_1)\vertiii{\bm{e}}\leq Ch \|\S(\bm{e})\|\vertiii{\bm{e}}.
\end{eqnarray*}
It then follows from the estimate \eqref{equ.thm.error.1} that
\begin{eqnarray}\label{equ.stokes.L2.2}
|\varphi_{\bm{\psi},\xi}(\bm{e})|\leq Ch^2(\|\bm{u}\|_2 +\|p\|_1)\|\S(\bm{e})\|.
\end{eqnarray}

Finally, substituting \eqref{equ.stokes.L2.1} and \eqref{equ.stokes.L2.2} into \eqref{equ.stokes.L2.error} gives
\begin{eqnarray}
\|\S(\bm{e})\|^2\leq Ch^2(\|u\|_2+\|p\|_1)\|\S(\bm{e})\|,
\end{eqnarray}
which gives rise to the desired error estimate \eqref{equ.thm.error.3}.
\end{proof}

\section{Superconvergence}\label{sectionSuperC}
Next we shall derive a superconvergence for the approximate velocity and pressure when the finite element partition consists of only rectangular elements. The result is based on the framework developed in \cite{LiDanWW} for the second order elliptic equation, with a particular attention paid to the pressure term.

\begin{theorem}\label{thm.error.2}
Let $(\bm{u};p) \in [H_0^1(\Omega)\cap H^{3}(\Omega)]^2\times (L_0^2(\Omega)\cap H^{2}(\Omega))$ be the solution of \eqref{Equ.Stokes}, and $(\bm{u}_b;p_h) \in [W_0(\T_h)]^2 \times P_0(\T_h)$ be the solution of the SWG scheme \eqref{equ.Stokes-FD-SWG} on rectangular meshes, respectively.
Then, the following superconvergence holds true:
\begin{eqnarray} \label{equ.thm.error.2}
\|\nabla_w(Q_h \bm{u} - \bm{u}_b )\|_0 + \|Q_h p - p_h\|_0 \leq Ch^{3/2} (\|\bm{u}\|_3 + \|p\|_2).
\end{eqnarray}
\end{theorem}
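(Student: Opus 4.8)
The plan is to re-use the error equations \eqref{equ.error.1}–\eqref{equ.error.2} together with the duality argument from the proof of Theorem~\ref{thm.error.3}, but now taking advantage of the special structure of rectangular elements to upgrade the $O(h)$ estimates of Theorem~\ref{thm.error.1} to $O(h^{3/2})$. The first step is to revisit the identity $\vertiii{\bm{e}}^2 = \varphi_{\bm{u},p}(\bm{e})$ from \eqref{equ.EE.1} and bound each of the three terms in $\varphi_{\bm{u},p}(\bm{e})$ by $Ch^{3/2}(\|\bm{u}\|_3+\|p\|_2)\vertiii{\bm{e}}$ rather than $Ch(\cdots)\vertiii{\bm{e}}$. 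For the consistency term involving $\tfrac{\partial \bm{u}}{\partial\bm{n}} - \mathbb{Q}_h(\nabla\bm{u})\cdot\bm{n}$, the gain comes from the orthogonality relation \eqref{EQ:S-Property:021}, i.e. $|e_1|(\S(\bm{v})-\bm{v})(M_1) = -|e_3|(\S(\bm{v})-\bm{v})(M_3)$, which on a rectangle means $\S(\bm{v})-\bm{v}$ has opposite equal values on opposite edges; pairing opposite edges of a rectangle against $\tfrac{\partial\bm{u}}{\partial\bm{n}} - \mathbb{Q}_h(\nabla\bm{u})\cdot\bm{n}$ and Taylor-expanding lets one extract an extra half-power (in fact a full power pointwise, but $h^{1/2}$ after the Cauchy–Schwarz sum over edges together with the trace of the third derivative). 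The pressure term $\langle(\S(\bm{e})-\bm{e})\cdot\bm{n}, p - Q_h p\rangle_{\partial T}$ is treated the same way, using \eqref{EQ:July01:001} and the cancellation across opposite faces, producing $Ch^{3/2}\|p\|_2\vertiii{\bm{e}}$. The stabilizer term $\kappa S(Q_b\bm{u},\bm{e})$ is already $O(h^2)\|\bm{u}\|_2\vertiii{\bm{e}}$ by \eqref{eq.EE.H1.1}, hence harmless. This yields $\vertiii{\bm{e}} \le Ch^{3/2}(\|\bm{u}\|_3+\|p\|_2)$, and then the inf-sup condition \eqref{equ.infsup.infsup} applied to $b(\bm{v},\eta) = a(\bm{e},\bm{v}) - \varphi_{\bm{u},p}(\bm{v})$ gives $\|\eta\| \le Ch^{3/2}(\|\bm{u}\|_3+\|p\|_2)$.

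Having $\vertiii{\bm{e}}$ at order $h^{3/2}$ does not yet give the claimed bound, because $\|\nabla_w(Q_h\bm{u}-\bm{u}_b)\|$ and $\vertiii{\bm{e}}$ are the same quantity up to the stabilizer; but the point of a superconvergence statement is precisely that on rectangular grids the structural cancellations make the consistency error genuinely smaller. So the heart of the argument is the edge-pairing estimate: for each rectangle $T$ with opposite edges $e_1,e_3$ (and $e_2,e_4$), write
\begin{equation*}
\sum_{i=1}^4 \langle r \cdot \bm{n}_i, (\S(\bm{e})-\bm{e})(M_i)\rangle |e_i|
= \big\langle (r|_{e_1} - r|_{e_3})\cdot\bm{n}_1, (\S(\bm{e})-\bm{e})(M_1)\big\rangle |e_1| + (\text{pair } 2\text{–}4),
\end{equation*}
where $r = \tfrac{\partial\bm{u}}{\partial\bm{n}} - \mathbb{Q}_h(\nabla\bm{u})\cdot\bm{n}$ (or $r = (p - Q_hp)$ in the pressure term), and use $\|r|_{e_1} - r|_{e_3}\|_{e_1} \le Ch\,\|D^2(\cdot)\|_{\omega_T}$ plus the already-known bound $\|\S(\bm{e})-\bm{e}\|_{\partial T} \le C(\|\bm{e}-Q_b\S(\bm{e})\|_{\partial T} + h^{1/2}\|\nabla_w\bm{e}\|_T)$ from \eqref{EQ:Hi-Feb27-001}. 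Summing over $T$ and applying Cauchy–Schwarz produces the factor $h \cdot \big(\sum_T h^{-1}\|\cdots\|^2\big)^{1/2}\vertiii{\bm{e}}$, which after the standard trace/approximation estimates is $Ch^{3/2}(\|\bm{u}\|_3+\|p\|_2)\vertiii{\bm{e}}$; absorbing $\vertiii{\bm{e}}$ gives the velocity part, and the inf-sup step gives the pressure part.

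The main obstacle will be making the opposite-edge cancellation rigorous, in particular controlling the remainder after the Taylor expansion of $r$ across the element: one must show that $r|_{e_1} + r|_{e_3}$ (the non-cancelling part, which multiplies the common value $(\S(\bm{e})-\bm{e})(M_1)$ up to sign) really does lose a power of $h$ relative to the naive bound, which requires carefully using that $M_1$ and $M_3$ are symmetric about the cell center $M_T$ and that $\mathbb{Q}_h(\nabla\bm{u})$ is the \emph{cell-average}, so that the leading Taylor terms of $\tfrac{\partial\bm{u}}{\partial\bm{n}}$ on the two opposite edges cancel against the average. A secondary subtlety is that $H^3$ regularity of $\bm{u}$ (and $H^2$ of $p$) is exactly what is needed to bound the second-order Taylor remainder in the $L^2(\partial T)$ norm after summation; one should invoke the trace inequality $\|D^2\bm{u}\|_{\partial T}^2 \le C(h^{-1}\|D^2\bm{u}\|_T^2 + h\|D^3\bm{u}\|_T^2)$ and sum. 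Once these element-level cancellation estimates are in place, the rest is bookkeeping identical to the proofs of Theorems~\ref{thm.error.1} and~\ref{thm.error.3}.
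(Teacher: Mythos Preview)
There is a genuine gap in your plan. Your strategy is to bound $|\varphi_{\bm{u},p}(\bm{e})|$ directly by $Ch^{3/2}(\|\bm{u}\|_3+\|p\|_2)\vertiii{\bm{e}}$ term by term, but this cannot succeed.

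First, the claim that the stabilizer term $\kappa S(Q_b\bm{u},\bm{e})$ is $O(h^2)\|\bm{u}\|_2\vertiii{\bm{e}}$ ``by \eqref{eq.EE.H1.1}'' is a misreading: that estimate gives only $O(h)$. On a square one computes $(\S(Q_b\bm{u})-Q_b\bm{u})(M_i)\approx \tfrac{h^2}{8}(\partial_{yy}\bm{u}-\partial_{xx}\bm{u})(x_T,y_T)$, so $S(Q_b\bm{u},Q_b\bm{u})^{1/2}\sim h\|\bm{u}\|_2$ and Cauchy--Schwarz yields nothing better than $O(h)\vertiii{\bm{e}}$. Second, the edge-pairing for the consistency term does not buy an extra power. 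Using $(\S(\bm{e})-\bm{e})|_{e_1}=(\S(\bm{e})-\bm{e})|_{e_2}$ and $\bm{n}_1=-\bm{n}_2$, the pair $e_1+e_2$ collapses to $\int_{e_1}\big(\partial_x\bm{u}|_{e_2}-\partial_x\bm{u}|_{e_1}\big)(\S(\bm{e})-\bm{e})\,dy$; the $\mathbb{Q}_h$ parts cancel, but $\partial_x\bm{u}|_{e_2}-\partial_x\bm{u}|_{e_1}=h\,\partial_{xx}\bm{u}+O(h^3)$ is still $O(h)$, exactly the same order as the unpaired quantity $\partial_x\bm{u}-\mathbb{Q}_h\partial_x\bm{u}$. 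Carrying the estimate through gives $Ch\|\bm{u}\|_2\vertiii{\bm{e}}$ again, not $Ch^{3/2}$. The same happens for the pressure term.

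The paper's argument is structurally different: it does \emph{not} try to show $\varphi_{\bm{u},p}$ is small. Instead it shows that on rectangles the three terms of $\varphi_{\bm{u},p}(\bm{v})$ can be rewritten, up to an $O(h^2)\vertiii{\bm{v}}$ remainder, as $\tfrac{h}{2}\sum_T\langle \bm{v}-\S(\bm{v}),\sigma_b+\rho_b\rangle_{\partial T}=\tfrac{h^2}{2}S(\sigma_b+\rho_b,\bm{v})=\tfrac{h^2}{2\kappa}\big[a(\sigma_b+\rho_b,\bm{v})-(\nabla_w(\sigma_b+\rho_b),\nabla_w\bm{v})\big]$ for explicit edge functions $\sigma_b,\rho_b$ built from $D^2\bm{u}$ and $\nabla p$. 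The leading piece $\tfrac{h^2}{2\kappa}a(\sigma_b+\rho_b,\bm{v})$ is only $O(h)\vertiii{\bm{v}}$ in size, but because it lies in the range of $a(\cdot,\bm{v})$ it can be \emph{absorbed}: setting $\tilde{\bm{e}}=\bm{e}-\tfrac{h^2}{2\kappa}(\sigma_b+\rho_b)$ yields a saddle-point system for $(\tilde{\bm{e}},\eta)$ with genuinely $O(h^2)$ right-hand sides. The drop from $h^2$ to $h^{3/2}$ then comes solely from the fact that $\sigma_b+\rho_b\notin W_0(\T_h)$ (it does not vanish on $\partial\Omega$), so one must first cut it off near the boundary. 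This corrector/absorption step is the missing idea in your proposal; the opposite-edge identities you cite are used, but only to \emph{identify} the form $\tfrac{h^2}{2}S(\cdot,\bm{v})$ of the leading error, not to bound it directly.
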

\begin{proof}
The proof relies on a refined treatment of the linear functional $\varphi_{\bm{u},p}(\cdot)$ in the error equation (\ref{equ.error.1}). To this end, consider the term $\sum_{T} \langle (\S(\bm{v})-\bm{v})\cdot \bm{n},p - Q_h p\rangle_{\partial T}$ on the right-hand side of  \eqref{equ.error.3}, where $\bm{v} \in [W_0(\T_h)]^2$ is an arbitrary test function. Recall that, from \eqref{EQ:S-Property:002} and \eqref{EQ:S-Property:008}, $\S(\bm{v})-\bm{v}$ has the same value at the center of $e_1$ and $e_2$ (respectively, $e_3$ and $e_4$). As shown in Fig. \ref{fig:rectangular}, we have $\bm{n}_{1}=- \bm{n}_{2}$ (respectively, $\bm{n}_{3}=- \bm{n}_{4}$ for the two horizontal edges). As $Q_h p$ is constant-valued on $T$, we thus have
\begin{eqnarray*}
\langle(\S(\bm{v})-\bm{v})\cdot\bm{n}, Q_h p\rangle_{\partial T} =0.
\end{eqnarray*}
Furthermore, with $\bm{v}=(v_1,v_2)^t$, we obtain
\begin{equation}\label{EQ:div:001}
\begin{split}
& \langle(\S(\bm{v})-\bm{v})\cdot \bm{n},p- Q_h p\rangle_{\partial T} \\
=\ &\langle(\S(\bm{v})-\bm{v})\cdot \bm{n},p\rangle_{\partial T}\\
=\ &\langle\S(v_1)-v_1,p\rangle_{e_2} - \langle\S(v_1)-v_1,p\rangle_{e_1}\\
\ &+ \langle\S(v_2)-v_2,p\rangle_{e_4} - \langle\S(v_2)-v_2,p\rangle_{e_3}\\
=\ & (\chi_1, \partial_x p)_T + (\chi_2, \partial_y p)_T,
\end{split}
\end{equation}
where $\chi_1$ is the constant extension of $\S(v_1)-v_1$ along the $x$-direction; i.e.,
$$
\chi_1(x,y): = (\S(v_1)-v_1)|_{e_1}(y) = (\S(v_1)-v_1)|_{e_2}(y),\qquad (x,y)\in T
$$
and $\chi_2$ is defined analogously as the constant extension of $\S(v_2)-v_2$ along the $y$-direction:
$$
\chi_2(x,y): = (\S(v_2)-v_2)|_{e_3}(x) = (\S(v_2)-v_2)|_{e_4}(x),\qquad (x,y)\in T.
$$
The equations \eqref{EQ:July01:001} show that the above extensions are well-defined.

The two terms on the right-hand side of \eqref{EQ:div:001} can be handled as follows. As $\chi_1$ is linear in $y$ and constant in $x$, we have
\begin{equation}\label{EQ:div:002p5}
(\chi_1, \partial_x p)_T = \chi_1(M_1) \int_T \partial_xp dT + R_{11}(T),
\end{equation}
where $M_i$ is the center of the edge $e_i$ and $R_{11}(T) = \int_T E_1(y)\partial_y \chi_1 \partial_{xy}^2 p dT$ with the kernel satisfying $|E_1(y)|\leq C h^2$. It is easy to see that $\partial_y \chi_1 = \partial_y \S(v_1)$. Thus, the error term $R_{11}(T)$ has the following estimate:
\begin{equation}\label{EQ:div:002}
|R_{11}(T)| \leq C h^2 \|\nabla \S(v_1)\|_T \|\nabla^2 p\|_T.
\end{equation}
For the first term on the right-hand of \eqref{EQ:div:002p5}, we apply the trapezoidal rule in the $x$-direction to obtain
\begin{equation}\label{EQ:div:003}
\chi_1(M_1) \int_T \partial_xp dT = \frac{h}{2} \chi_1(M_1) \int_{e_1} \partial_x p dy + \frac{h}{2} \chi_1(M_2) \int_{e_2} \partial_x p dy + R_{12}(T),
\end{equation}
where we have used $\chi_1(M_1) = \chi_1(M_2)$ and
$$
R_{12}(T) = \chi_1(M_1) \int_T E_2(x) \partial_{xx}^2 p dT
$$
with the kernel function satisfying $|E_2(x)|\leq C h$. The remainder term $R_{12}(T)$ can then be bounded as follows:
\begin{equation}\label{EQ:div:004}
|R_{12}(T)| \leq C h^2 |\chi_1(M_1)|\ \|\nabla^2 p\|_T.
\end{equation}
Substituting \eqref{EQ:div:003} into \eqref{EQ:div:002p5} yields the following:
\begin{equation}\label{EQ:div:005}
(\chi_1, \partial_x p)_T = \frac{h}{2} \chi_1(M_1) \int_{e_1} \partial_x p dy + \frac{h}{2} \chi_1(M_2) \int_{e_2} \partial_x p dy + R_{11}(T)+R_{12}(T).
\end{equation}
By introducing the following function
$$
\rho_b^{(1)}=\left\{
\begin{array}{rl}
\frac{1}{|e_1|} \int_{e_1} \partial_x p dy\qquad &\mbox{on } e_1,\\
\frac{1}{|e_2|} \int_{e_2} \partial_x p dy\qquad &\mbox{on } e_2,\\
0 \qquad &\mbox{on $e_3$ and $e_4$},
\end{array}
\right.
$$
we may rewrite \eqref{EQ:div:005} as follows:
\begin{equation}\label{EQ:div:006}
\begin{split}
(\chi_1, \partial_x p)_T = &\ \frac{h}{2} \langle \chi_1, \rho_b^{(1)}\rangle_\pT + R_{11}(T)+R_{12}(T)\\
=& \ \frac{h}{2} \langle \S(v_1) - v_1, \rho_b^{(1)}\rangle_\pT + R_{11}(T)+R_{12}(T).
\end{split}
\end{equation}

Analogously, for the second term on the right-hand side of \eqref{EQ:div:001}, there are remainder terms $R_{21}(T)$ and $R_{22}(T)$ such that
\begin{equation}\label{EQ:div:007}
\begin{split}
(\chi_2, \partial_y p)_T =\ \frac{h}{2} \langle \S(v_2) - v_2, \rho_b^{(2)}\rangle_\pT + R_{21}(T)+R_{22}(T),
\end{split}
\end{equation}
where
$$
\rho_b^{(2)}=\left\{
\begin{array}{rl}
\frac{1}{|e_3|} \int_{e_3} \partial_y p dx\qquad &\mbox{on } e_3,\\
\frac{1}{|e_4|} \int_{e_4} \partial_y p dx\qquad &\mbox{on } e_4,\\
0 \qquad &\mbox{on $e_1$ and $e_2$},
\end{array}
\right.
$$

By setting $\rho_b=(\rho_b^{(1)}, \rho_b^{(2)})^t$, we have from combining \eqref{EQ:div:001} with \eqref{EQ:div:006} and \eqref{EQ:div:007} that
\begin{equation}\label{EQ:div:008}
\begin{split}
 \langle(\S(\bm{v})-\bm{v})\cdot \bm{n},p- Q_h p\rangle_{\partial T}
=  \frac{h}{2} \langle \S(\bm{v}) - \bm{v}, \rho_b\rangle_\pT + R(T),
\end{split}
\end{equation}
where $R(T) = \sum_{i,j=1}^2 R_{ij}(T)$ is the remainder satisfying
\begin{equation}\label{EQ:div:009}
\begin{split}
|R(T)| \leq & \ C h^2 \left(\|\nabla \S(\bm{v})\|_T + \sum_{i=1}^4 |(\S(\bm{v})-\bm{v})(M_i)|\right) \|\nabla^2 p\|_T\\
\leq & \ C h^2 \left(\|\nabla_w \bm{v}\|_T + S_T(\bm{v}, \bm{v})^{1/2}\right) \|\nabla^2 p\|_T.
\end{split}
\end{equation}
Here we have used the estimate \eqref{EQ:S-Property:0212} and the expression \eqref{Def-ST-poly} in the second inequality.
Summing over all the elements in $\T_h$ gives
\begin{equation}\label{EQ:div:010}
\sum_{T\in\T_h} \langle(\S(\bm{v})-\bm{v})\cdot \bm{n},p- Q_h p\rangle_{\partial T}
=  \frac{h}{2} \sum_{T\in\T_h} \langle \S(\bm{v}) - \bm{v}, \rho_b\rangle_\pT + R,
\end{equation}
where
\begin{equation}\label{EQ:div:011}
\begin{split}
|R| = \left| \sum_{T\in\T_h} R(T) \right|
\leq C h^2 \left(\|\nabla_w\bm{v}\| + S(\bm{v}, \bm{v})^{1/2}\right) \|\nabla^2 p\|.
\end{split}
\end{equation}

As to the first two terms on the right-hand side of \eqref{equ.error.3}, it has been shown in \cite{LiDanWW} that exists another function $\sigma_b=(\sigma_b^{(1)},  \sigma_b^{(2)})^t$ and a remainder $R_5$ such that
\begin{equation}\label{EQ:div:012}
\begin{split}
&\ \kappa {S}(Q_b\bm{u},\bm{v})
+ \sum_{T}\langle \frac{\partial \bm{u}}{\partial \bm{n}}- \mathbb{Q}_h(\nabla \bm{u})\cdot\bm{n},\S(\bm{v})-\bm{v}\rangle_{\partial T}\\
& = \frac{h}{2} \sum_{T\in\T_h} \langle \bm{v} - \S(\bm{v}), \sigma_b\rangle_\pT + R_5,
\end{split}
\end{equation}
where
\begin{equation}\label{EQ:div:013}
|R_5|
\leq C h^2 \left(\|\nabla_w\bm{v}\| + S(\bm{v}, \bm{v})^{1/2}\right) \|\nabla^3 \bm{u}\|.
\end{equation}

Combining \eqref{equ.error.3} with \eqref{EQ:div:012} and \eqref{EQ:div:010} gives
\begin{equation}\label{EQ:div:015}
\begin{split}
\varphi_{\bm{u},p}(\bm{v}) = &\ \frac{h}{2} \sum_{T\in\T_h} \langle \bm{v} - \S(\bm{v}), \sigma_b+\rho_b\rangle_\pT + R_5 -R\\
= & \ \frac{h^2}{2} S(\sigma_b+\rho_b, \bm{v}) + R_5 -R\\
= & \ \frac{h^2}{2\kappa} \left[ a(\sigma_b+\rho_b, \bm{v}) - (\nabla_w( \sigma_b+\rho_b), \nabla_w \bm{v})\right] + R_5 - R,
\end{split}
\end{equation}
where we have used \eqref{EQ:June30:001} in the last equation. By letting $\tilde {\bm{e}} = \bm{e} - \frac{h^2}{2\kappa} (\sigma_b+\rho_b)$, the error equations (\ref{equ.error.1})-(\ref{equ.error.2}) can be rewritten as
\begin{eqnarray}
a(\tilde{\bm{e}},\bm{v})-b(\bm{v},\eta) &=& R_5-R - \frac{h^2}{2\kappa} (\nabla_w( \sigma_b+\rho_b), \nabla_w \bm{v}), \label{equ.error.1:001}\\
b(\tilde{\bm{e}},w)&=&- \frac{h^2}{2\kappa} b(\sigma_b+\rho_b,w),\label{equ.error.2:001}
\end{eqnarray}
where the right-hand sides have the following bound:
\begin{equation}\label{EQ:div:018}
\begin{split}
&\left| R_5-R - \frac{h^2}{2\kappa} (\nabla_w( \sigma_b+\rho_b), \nabla_w \bm{v})\right| \leq C h^2 (\|\bm{u}\|_3 + \|p\|_2) a(\bm{v},\bm{v})^{1/2},\\
&\left| \frac{h^2}{2\kappa} b(\sigma_b+\rho_b,w)\right| \leq C h^2 (\|\bm{u}\|_3 + \|p\|_2) \|w\|.
\end{split}
\end{equation}
The equations \eqref{equ.error.1:001}-\eqref{equ.error.2:001} with $\O(h^2)$ load functions on the right-hand sides lead immediately to the superconvergence estimate \eqref{equ.thm.error.2} by selecting a particular test function $\bm{v}$. The estimate for $\eta$ is given by the inf-sup condition. The drop of half order is due to the fact that the small perturbation term $\frac{h^2}{2\kappa} (\sigma_b+\rho_b)$ applied to the error function $\bm{e}$ may not vanish on the boundary. Details on how the technique works can be found in \cite{LiDanWW}.
\end{proof}

\section{Numerical Experiments}\label{numerical-experiments}
In this section, we numerically verify the theoretical error estimates developed in the previous sections for the finite difference scheme \eqref{equ.Stokes_FDS} when applied to the Stokes problem \eqref{Equ.Stokes}.
The following metrics are used to measure the magnitude of the error:
\begin{eqnarray*}
&&\text{$L^2$-norm for the velocity $w=u,v$: }\\
&&\|w_b-w\|_{0}^2=\sum_{i=1}^{n+1}\sum_{j=1}^{n}h^2|w_{i-\frac{1}{2},j}-w(x_{i-\frac{1}{2}},y_j)|^2  +\sum_{i=1}^{n}\sum_{j=1}^{n+1}h^2|w_{i,j-\frac{1}{2}}-w(x_{i},y_{j-\frac{1}{2}})|^2,\\
&&\text{$H^1$-norm for the velocity $w=u,v$: }\\
&&\|w_b-w\|_1^2=\sum_{i=1}^{n}\sum_{j=1}^{n} h^2\left|\frac{w_{i+\frac{1}{2},j}-w_{i-\frac{1}{2},j}}{h} - \frac{\partial w}{\partial x} (x_{i},y_{j})\right|^2 \nonumber\\
&&\qquad\qquad\qquad  + \sum_{i=1}^{n}\sum_{j=1}^{n} h^2\left|\frac{w_{i,j+\frac{1}{2}}-w_{i,j-\frac{1}{2}}}{h} - \frac{\partial w}{\partial y} (x_{i},y_{j})\right|^2,\\
&&\text{$L^2$-norm for the pressure $p$: }\\
&&\|p_h-p\|_0^2= \sum_{i=1}^{n}\sum_{j=1}^{n} h^2|p_{i,j}-p (x_{i},y_{j})|^2,
\end{eqnarray*}
For simplicity, only uniform square meshes are employed in our numerical experiments.

\subsection{Test Case 1}
The computational domain in the first test case is given by $\Omega=(0,\pi)\times (0,\pi)$, which is partitioned into $n\times n$ small squares of equal-size. The exact solution of the model problem is given by
\begin{equation*}
\left \{
\begin{array}{lll}
\bm{u} =\left(
\begin{array}{lll}
&\sin^2(x)\cos(y)\sin(y) &\\
-&\cos(x)\sin(x)\sin^2(y)&
\end{array}
\right),\\
~\\
p=\cos(x)\cos(y).
\end{array}
\right.
\end{equation*}
The right-hand side function and the Dirichlet boundary data are chosen to match the exact solution. Note that this example has been considered in \cite{RXZZ-JCAM-2016}.

The numerical approximation pressure field is shown in Fig. \ref{Fig.sub1.2.lp}, while the velocity vector and magnitude filds are plotted in Fig. \ref{Fig.sub1.2.2v}.
\begin{figure}[H]
\centering
\subfigure[Pressure field]{
\label{Fig.sub1.2.lp}
\includegraphics [width=0.45\textwidth]{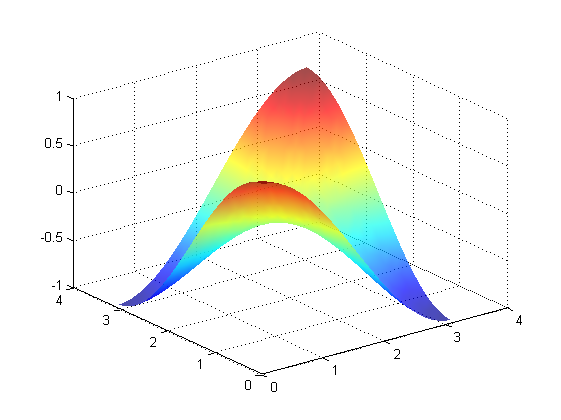}
}
\subfigure[Velocity field]{
\label{Fig.sub1.2.2v}
\includegraphics [width=0.45\textwidth]{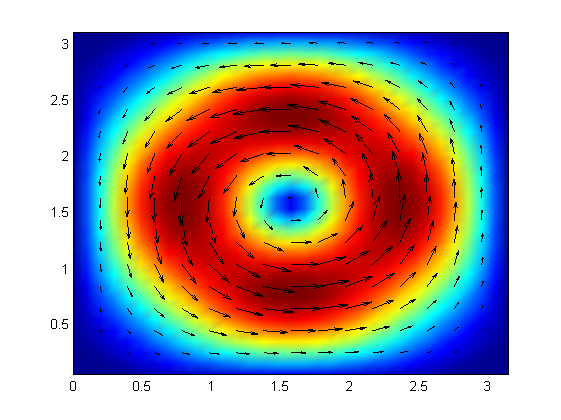}
}

\caption{Numerical results of test case 1 obtained with the SWG scheme using uniform rectangular meshes with $h=1/32$.}
\label{fig:testcase1}
\end{figure}

Table \ref{table4} illustrates the numerical performance of the finite difference scheme \eqref{equ.Stokes_FDS}. The numerical velocity is clearly converging at the optimal order of $r=2$ in the $L^2$ norm. For the numerical pressure, the error at the cell center is decreasing at the order of $r=2$, and the same conclusion can be obtained for the numerical velocity in a discrete $H^1$ norm defined by using the cell centers. This implies that the pressure and the numerical velocity are of  superconvergent to the exact solution at the center of cells. The numerical results outperforms the theoretical prediction of $r=1.5$ shown in Theorem \ref{thm.error.2}.

{\small
\begin{table}[h]
\begin{center}
\caption{Error and convergence performance of the finite difference scheme \eqref{equ.Stokes_FDS} for the Stokes equation on uniform meshes of test case 1. $r$ refers to the order of convergence in $O(h^r)$.}\label{table4}
{\tiny\begin{tabular}{||c|cc|cc|cc|cc|cc||}
\hline
$n$ & $\|u_h-u \|_{0}$ & $r=$ & $\|u_h-u\|_{1}$ & $r=$ &$\|v_h-v \|_{0}$& $r=$ &$\|v_h-v\|_{1}$ & $r=$ &$\|p_h-p \|_{0}$& $r=$ \\
\hline
8     & 2.35e-02  &  0.00   &   5.90e-02  &  0.00  &    5.69e-02 &   0.00  &    6.61e-02  &  0.00   &   1.48e-01 &   0.00\\
16    & 6.26e-03  &  1.91   &   1.64e-02  &  1.85  &    1.53e-02 &   1.90  &    1.92e-02  &  1.78   &   4.29e-02 &   1.79\\
32    & 1.60e-03  &  1.97   &   4.25e-03  &  1.95  &    3.89e-03 &   1.97  &    5.01e-03  &  1.94   &   1.13e-02 &   1.92\\
64    & 4.01e-04  &  1.99   &   1.08e-03  &  1.98  &    9.78e-04 &   1.99  &    1.27e-03  &  1.98   &   2.88e-03 &   1.97\\
\hline
\end{tabular}}
\end{center}
\end{table}
}

\subsection{Test Case 2} In this test case, the analytical solution is given by
\begin{equation*}
\left \{
\begin{array}{lll}
\bm{u} =\left(
\begin{array}{lll}
-&256x^2(x-1)^2y(y-1)(2y-1)&\\
 &256y^2(y-1)^2x(x-1)(2x-1)&
\end{array}
\right),\\
~\\
p= 150(x-0.5)(y-0.5),
\end{array}\right.
\end{equation*}
and the domain is the unit square $\Omega=(0,1)\times(0,1)$.
The right-hand side function and the Dirichlet boundary data are chosen to match the exact solution. Note that this example has been considered in \cite{LS-JSC-2015}.

The numerical approximation pressure field is shown in Fig. \ref{Fig.sub2.2.lp}, while the velocity vector and magnitude filds are plotted in Fig. \ref{Fig.sub2.2.2v}.
\begin{figure}[H]
\centering
\subfigure[Pressure field]{
\label{Fig.sub2.2.lp}
\includegraphics [width=0.45\textwidth]{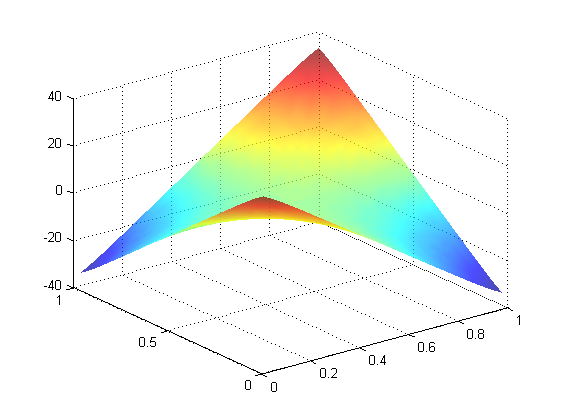}
}
\subfigure[Velocity field]{
\label{Fig.sub2.2.2v}
\includegraphics [width=0.45\textwidth]{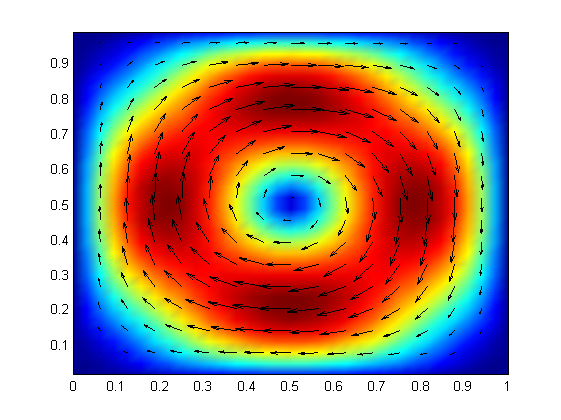}
}

\caption{Numerical results of test case 2 obtained with the SWG scheme using uniform rectangular meshes with $h=1/32$.}
\label{fig:testcase2}
\end{figure}

Table \ref{table5} illustrates the numerical performance of the finite difference scheme \eqref{equ.Stokes_FDS} for Test Case 2. It can be seen that the numerical velocity is converging at the optimal order of $r=2$ in the usual $L^2$ norm. For the pressure approximation, the error at the cell centers seems to be decreasing  at the rate of $O(h^{1.8})$. It appears that the numerical velocity is also converging at the order of $r=1.8$ in the discrete $H^1$ norm defined on the cell centers. The results clearly indicate a superconvergence of the finite difference scheme for the velocity in $H^1$ and the pressure in $L^2$ on cell centers. Once again, the numerical results outperforms the theoretical prediction of $r=1.5$ shown in Theorem \ref{thm.error.2}.

\begin{table}[h]
\begin{center}
\caption{Error and convergence performance of the finite difference scheme \eqref{equ.Stokes_FDS} for the Stokes equation on uniform meshes of test case 2. $r$ refers to the order of convergence in $O(h^r).$ }\label{table5}
{\tiny
\begin{tabular}{||c|cc|cc|cc|cc|cc||}
\hline
$n$ & $\|u_h-u \|_{0}$ & $r=$ & $\|u_h-u\|_{1}$ & $r=$ &$\|v_h-v \|_{0}$& $r=$ &$\|v_h-v\|_{1}$ & $r=$ &$\|p_h-p \|_{0}$& $r=$ \\
\hline
8   &  1.03e-01  & 0.00   &  6.26e-01  & 0.00  &   7.17e-02  & 0.00  &   4.78e-01 &  0.00  &   1.39e+00  & 0.00\\
16  &  2.90e-02  & 1.82   &  1.97e-01  & 1.67  &   2.07e-02  & 1.79  &   1.60e-01 &  1.57  &   4.68e-01  & 1.58\\
32  &  7.55e-03  & 1.94   &  5.73e-02  & 1.78  &   5.43e-03  & 1.93  &   4.88e-02 &  1.72  &   1.43e-01  & 1.71\\
64  &  1.91e-03  & 1.98   &  1.60e-02  & 1.84  &   1.38e-03  & 1.98  &   1.41e-02 &  1.79  &   4.14e-02  & 1.79\\
\hline
\end{tabular} }
\end{center}
\end{table}

\subsubsection{Test Case 3: Lid driven Cavity}
The lid-driven cavity problem is a benchmark test case for Stokes flow, which has been tested in many existing literatures, including \cite{LTF-IJNMF-1995,LIU-SIAM-NA-2011,WWY-SIAM-JSC-2009,RXZZ-JCAM-2016}.
In this test case, a uniform mesh with meshsize $h=1/32$ is employed in our new finite difference scheme.
The source term is $\bm{f} =\bm{0}$, and the Dirichlet boundary condition is given as $\bm{u} = (1,0)^{T}$ for $y=1, x\in(0,1)$ and $\bm{u} = (0,0)^{T}$ on the rest of the boundary.
The exact solution of lid-driven problem is not known, but the solution is known to have singularity at the corner points $(0,1)$ and $(1,1)$.
The velocity field and streamlines are plotted in Fig. \ref{Fig.sub.2.lv} and Fig.\ref{Fig.sub.2.2s}, respectively. The pressure contour plot is shown in Fig. \ref{fig:mesh:2}. The shape of streamlines is similar to the result given in \cite{LTF-IJNMF-1995,LIU-SIAM-NA-2011,WWY-SIAM-JSC-2009,RXZZ-JCAM-2016}, and the results look quite reasonable.
\begin{figure}[H]
\centering
\subfigure[Velocity field]{
\label{Fig.sub.2.lv}
\includegraphics [width=0.40\textwidth]{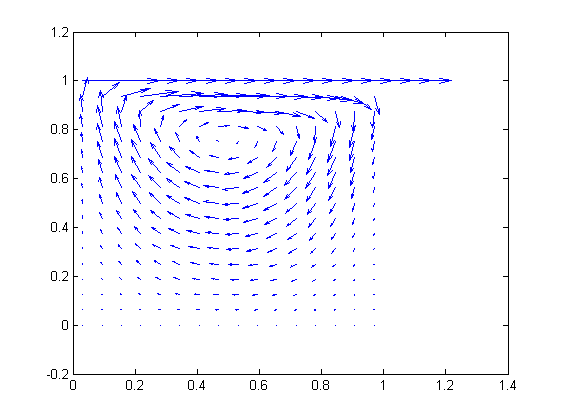}
}
\subfigure[Streamlines]{
\label{Fig.sub.2.2s}
\includegraphics [width=0.40\textwidth]{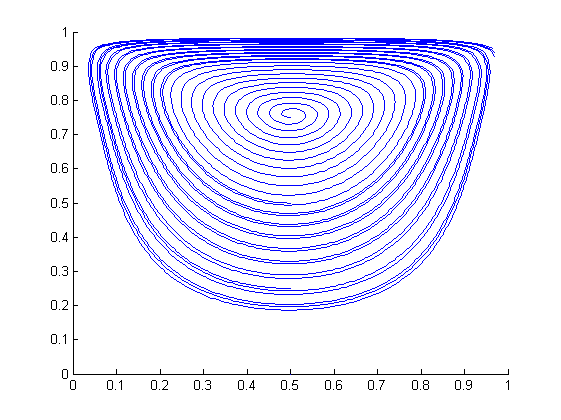}
}
\caption{Numerical results of lid-driven problem obtained with the SWG scheme using uniform rectangular meshes with $h=1/32$.}
\label{fig:mesh}
\end{figure}

\begin{figure}[H]
\centering
\includegraphics [width=0.40\textwidth]{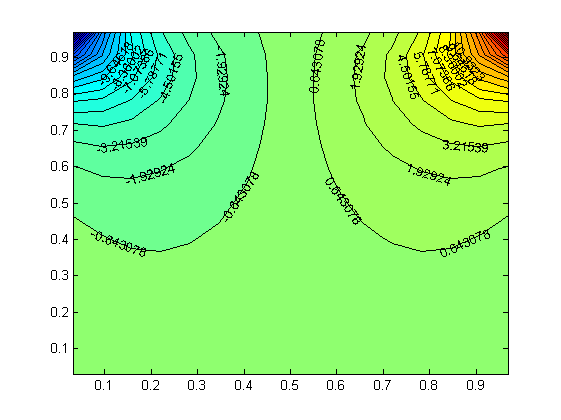}
\caption{Numerical results -- Pressure contours of lid-driven problem obtained with the SWG scheme using uniform rectangular meshes with $h=1/32$.}
\label{fig:mesh:2}
\end{figure}


\end{document}